\newtheorem{lemma}{Lemma}[section]
\newtheorem{theorem}[lemma]{Theorem}
\newtheorem{prop}[lemma]{Proposition}
\newtheorem{rem}[lemma]{Remark}
\theoremstyle{definition}
\numberwithin{equation}{section}
\numberwithin{lemma}{section}
\def\R{{\mathbb R}}
\newcommand{\ep}{\epsilon}
\newcommand{\wt}{\widetilde}
\newcommand{\eps}{\epsilon_{\star}}
\newcommand{\af}{A_f}
\newcommand{\ovr}{\overline{r}}
\newcommand{\wtt}{\mathscr{\widetilde{T}}_{cu}}
\newcommand{\p}{\partial}
\newcommand{\ST}{\mathscr{T}}
\newcommand{\CM}{\mathcal{M}}
\newcommand{\CA}{\mathcal{A}}
\newcommand{\wte}{\widetilde{\mathscr{T}}_*}
\begin{document}

\title[Normally Elliptic Singular Perturbations]{Normally Elliptic Singular Perturbations and persistence of homoclinic orbits}


%

\author[Lu and Zeng]{Nan Lu and Chongchun Zeng$^1$}
\address{School of Mathematics \\
Georgia Institute of Technology\\ Atlanta, GA 30332}
\email{nlu@math.gatech.edu \\zengch@math.gatech.edu}
\thanks{$^1$ The second author is funded in part by NSF-DMS 0801319.}
\maketitle

\begin{abstract} We consider a dynamical system, possibly infinite dimensional or non-autonomous, with fast and slow time scales which is oscillatory with high frequencies in the fast directions. We first derive and justify the limit system of the slow variables. Assuming a steady state persists, we construct the stable, unstable, center-stable, center-unstable, and center manifolds of the steady state of a size of order $O(1)$ and give their leading order approximations. Finally, using these tools, we study the persistence of homoclinic solutions in this type of normally elliptic singular perturbation problems.
\end{abstract}

\section{Introduction} \label{S:intro}

A singular perturbation system usually involves different temporal or spatial scales. Here we focus on multiple time scales in which case the system takes the abstract form of
\begin{equation} \label{E:SP0}
\dot x = F(x, y, \ep)\qquad \ep \dot y = G(x, y, \ep).
\end{equation}
The fast motions in the $y$ direction are often some noise or transient behaviors and the slow motions in the $x$ direction are more of the focus of the problem. In the singular limit as $\ep \to 0$, we obtain $g(x, y, 0)=0$. Suppose $y = \phi(x)$ (without the loss of generality, assuming $\phi\equiv 0$) solves this equation, the limit motion of $x$ is given by
\begin{equation} \label{E:SPL0}
\dot x = F(x, 0, , 0).
\end{equation}
Let $\tilde y = \frac y\ep$, the $y$ equation in \eqref{E:SP0} takes the form
\begin{equation} \label{E:NE0}
\dot {\tilde y} = \frac {G_y(x, 0, 0)}\ep \tilde y + g(x, \tilde y, \ep).
\end{equation}

The singular perturbation system \eqref{E:SP0} is called normally hyperbolic if,
for each $x$, the linear flow $e^{tG_y(x, 0, 0)}$ on the $y$ space is hyperbolic,
i.e. it is exponentially contracting on one closed subspace and exponentially expanding
in an complementary subspace. In this case, the standard normally hyperbolic invariant
manifold theory \cite{Fe71, HPS77, He81, BLZ98, BLZ99} applies to yield a persistent normally hyperbolic invariant slow manifold $M_\ep$ given by a graph $y = \ep \phi(x, \ep)$. In the fast (and hyperbolic in natural) motions outside $M_\ep$, solutions usually approach a neighborhood of $M_\ep$ exponentially along its stable direction. After some time moving along the slow manifold, the solutions leave the neighborhood exponentially along the unstable directions. These motions of multiple scales can be connected by tools such as invariant foliations \cite{Fe74, Fe77, HPS77, CLL91, BLZ00} and this geometric approach has led to a huge success in the study of the dynamics of singular perturbation system \eqref{E:SP0}. See for example \cite{Fe79, Jo95, JK95, BLZ08}.

In the normally elliptic case, i.e. $e^{tG_y(x, 0, 0)}$ is oscillatory instead of hyperbolic, on the one hand, the persistence of the slow manifold is not always guaranteed \cite{GL02, GL03}. On the other hand, solutions starting near $\{y=0\}$ should stay there at least for some $O(1)$ time period due to the lack of strong exponential instability in the $y$ direction. One typical situation of this type is when $G_y(x, 0, 0)$ is anti-self-adjoint.

In this paper, with applications to both ODEs and PDEs in mind, we study these normally elliptic singular perturbation problems in an infinite dimensional dynamical system and possibly non-autonomous framework. Assuming $G_y(x, 0, 0) = J$, a constant anti-self-adjoint operator, we first justify the limit equation \eqref{E:SPL0} of the slow variable $x$ through a careful averaging.

A more important question is how much of the dynamical structure of the limit slow system \eqref{E:SPL0} remains in the singularly perturbed system \eqref{E:SP0}. Elliptic type motions in the slow directions such as periodic or quasi-periodic solutions may be resonant with the oscillatory fast motions in the $y$ direction. Some results have been obtained on the persistence of periodic orbits for nonresonant $\ep<<1$ \cite{GL02, GL03, L00, Ma01, SZ02}. Here instead we focus on the basic hyperbolic structure -- the local invariant manifolds near a steady state. Suppose $(0, 0)$ persists as a steady state of \eqref{E:SP0} for $\ep <<1$. Assume the linearization of the limit slow system \eqref{E:SPL0} has invariant stable, unstable, and center subspaces $X^{u,s,c}$. For the expanded system \eqref{E:SP0}, the normal directions -- the $Y$ space -- with the oscillatory linearized flow $e^{tJ}$ should obviously be considered as additional center directions. The first observation is, even though system
\eqref{E:SP0} is singular, the existence of local invariant manifolds of $(0,0)$ is guaranteed by the standard theory (see, for example, \cite{BJ89, Ha61, CL88}) after a rescaling of the time by a factor of $\ep$. However, since the exponential growth/decay rates in the unstable/stable direction are $O(\ep)$ after the rescaling, this approach would only yield local invariant manifolds of the size of $O(\ep)$, which is far from being useful in most applications, such as studying the persistence of homoclinic orbits. 

Our main result in the manuscript is the existence and smoothness and the leading order approximation of invariant manifold of the steady state of the size of $O(1)$ based on a combination of the averaging and Lyapunov-Perron integral equation methods.

As an application which is also an fundamental problem itself, suppose there exists a homoclinic orbits in the limit slow system \eqref{E:SPL0} and we study its persistence in the singular perturbation system \eqref{E:SP0} which can be either weakly dissipative or conservative. In the former, we derive the Melnikov function, which include an additional term coming from the fast directions, whose simple zero indicates a persistence homoclinic orbit to $(0, 0)$. In the
latter, when the system is analytic in reasonably low dimensions, along with some other structures such as the Hamiltonian setting or the reversibility, it has been shown that the stable and unstable manifold miss each other by an error like $O(e^{-\frac C\ep})$ \cite{Sun98, G00, L00, To00}. Without these assumptions, we prove that there always exist orbits homoclinic to the center-manifold, forming a tube homoclinic to the center manifold. While we follow the well-developed geometric ideas in the finite or infinite dimensional regular perturbation problems \cite{GH83, HM81, LMSW96, SZ03}, the proof heavily depends on the invariant manifolds we studied.

Before finishing the introduction, we would like to give two simple examples which partially motivated us to study this subject, while it is also easy to come up with examples in infinite dimensions. One is an elastic pendulum with fast and slow frequencies itself and the other one is a bifurcation problem which does not have any singular parameter in the appearance.

A pendulum of the unit length with a fixed end is described by the Duffing equation. In a more careful model, the pendulum usually considered as rigid may have some small elasticity -- meaning large elastic constant $\frac 1{\ep^2}$ -- allowing the pendulum to be stretched or contracted slightly in the radial direction. Let $x$ be the angular and $1+y$ be the radial coordinates, respectively, and the system takes of the form of a normally elliptic singular perturbation problem
\begin{equation}\label{eq1.1}
\left\{\begin{aligned}(1+y)\ddot{x}+2\dot{x}\dot{y}& +g\sin{x}   +2\ep\gamma
(1+y)\dot{x}- \frac \ep{1+y} F_1(x,y,\ep,t)=0\\
\ddot{y}-(1+y)\dot{x}^2&+\frac{1}{\ep^2}y -g\cos{x}+2\ep
\gamma \dot{y}-\ep F_2(x,y,\ep,t)=0
\end{aligned} \right.
\end{equation}
Where we also included the small damping and forcing. Formally, as $\ep\to 0$, i. e. the pendulum converges to be rigid, the corresponding singular limit \eqref{E:SPL0} for the above system becomes
\begin{equation}\label{eq1.2}
y\equiv 0, \qquad  \ddot{x}+g\sin{x}=0.
\end{equation}
When there is no damping and the force is conservative, the problem is in the Lagrangian setting and the limit equation is justified in \cite{RU57, Ar78, Ta80}. In the dynamics, the state $(\pi, 0)$ is a hyperbolic steady state of \eqref{eq1.2} with a homoclinic orbit which often leads to chaos even under small regular perturbation \cite{GH83}. One may easily change the variables in the singular equation of $y$ and make it anti-self-adjoint. Our general results apply to \eqref{eq1.1} and give the criterion when the homolcinics persist under either dissipative or conservative perturbation. This example will be revisited in Section \ref{S:homo}.

The singular perturbations theory also applies to problems which may not be explicitly in the form of \eqref{E:SP0}. Consider an autonomous 4-dim ODE system with a parameter $\ep$ which has the origin $O$ as a fixed point for all $\ep<<1$. Assume, at $\ep=0$, the linearized systems has simple eigenvalues $\pm i$ and a double eigenvalue $0$. While the unfolding of the focal point has been studied thoroughly (see, for example \cite{CLW94}), we note that the oscillatory motions are essentially at a much faster scale in the directions of the pair of elliptic eigenvalues. Under these assumptions, some simple normal forms transformations and near identity time rescaling, the generic form of the system looks
\[
\dot x = \begin{pmatrix} a_{11}(\ep) & 1+ a_{12}(\ep) \\ a_{21}(\ep) & a_{22} (\ep) \end{pmatrix} x + O(|x|^2+ |y|^2) \qquad \dot y= \begin{pmatrix} b (\ep) & 1 \\ -1 & b (\ep) \end{pmatrix} y + O(|x|^2+ |y|^2)
\]
where $x, \, y \in \R^2$ and $a_{lm}(0)=b(0)=0$. Rescale the system again by
\[
x_1 = \ep \tilde x_1, \; x_2 = \ep^{\frac 32} \tilde x_2, \; y=\ep \tilde y, \; t=\ep^{-\frac 12} \tau
\]
we obtain a singular perturbed systems in the form of \eqref{E:SP0} of the normally elliptic type with the singular parameter $\mu = \ep^{\frac 12}$. If $\frac {d a_{21}}{d \ep} (0)> 0$, the origin becomes hyperbolic in the $x$ directions and we obtain the local center manifolds of order $O(1)$ size in the rescaled variables. If $\frac { db}{d \ep}(0) \ne0$ in addition, we are in the right position to study the Hopf bifurcation from the eigenvalues $\pm i$ in this rather degenerate case. (See \cite{Fe83} for an approach essentially different from the Hopf bifurcation.) A more detailed study of this type of bifurcation problems will be given in a forthcoming paper.

The rest of the manuscript is organized as the following. In Section~\ref{S:intro} we present the general framework and outline the main results on invariant manifolds and foliations. The justification of the limit slow equations and its linearization are obtained in Section~\ref{S:FT}. In Section~\ref{S:InMa} and~\ref{S:InFo} we study invariant manifolds and foliations and focus on their leading order approximations. Finally the homoclinic orbits are considered in Section~\ref{S:homo}. In the Appendix, we outline a process to block-diagonalize the linearized system of \eqref{E:SP0} at a steady state.

\section{Framework and main results on invariant manifolds and foliations}

We formulate the problem as a non-autonomous infinite dimensional dynamical systems with a singular parameter. This framework allows one to apply the general results to ODEs as well as PDEs or functional differential equations.

In a Banach space $Z$ and $z \in Z$, we usually denote a ball by $B_r(z, Z)$. Let $Z_1,Z_2$ be Banach spaces and $k\ge 1$ be an integer. We adopt the notations
\begin{align*}
&L_k (Z_1, Z_2) \triangleq L(\otimes^k Z_1, Z_2) =  \big\{\text{bounded } k-\text{linear operators } Z_1 \to Z_2\big\}\\
&|\phi| \triangleq \sup_{|z_1| \le 1, \ldots |z_k|\le 1} |\phi(z_1, \ldots, z_k)|, \quad \text{for } \phi \in L_k(Z_1, Z_2)\\
&C^k(Z_1,Z_2)=\big\{h\big|h:Z_1\rightarrow Z_2, \hspace{1.5mm}k\mbox{-times countinuously differentiable with} \\
& \qquad \qquad \qquad \qquad \text{finite } C^k \text{ norm}\big\}.
\end{align*}
Note $L_1(Z_1,Z_2)=L(Z_1,Z_2)$ is simply the space of bounded linear operators.

Throughout the manuscript, we use $D$ or $D^k$ to denote differentiations with respect to
variables in the phase space and we will use $\partial$ for derivatives with respect to time $t$ or other parameters.



Let $X$ be a Banach space and $Y$ a Hilbert space and we consider the system
\begin{equation}\label{eq3.1}
\left\{\begin{aligned}
&\dot{x}=Ax+f(x,y,t,\ep)\\
&\dot{y}=\frac{J}{\ep}y+g(x,y,t,\ep)
\end{aligned} \right.
\end{equation}

The following assumptions may look complicated which is only due to our intention to make the result applicable to PDEs where unbounded operators and different function spaces are involved. For ODE systems, these assumption would simply be
\begin{itemize}
\item $J$ is an anti-symmetric matrix and $(f, g)$ are smooth functions.
\end{itemize}
In general, we assume for some constants $C_0$,
\begin{enumerate}
\item[(A1)] $A: X_1 \triangleq D(A) \rightarrow X$, where $X_1 \subset X$ is endowed with the graph norm $|\cdot|_{X_1}$, generates a $C_0$-semigroup $e^{tA}$ on $X$ such that $|e^{tA}|\leq Me^{\omega t}$, $t\ge 0$, for some $M>0$ and $\omega\in \R$.
\item[(A2)] $J$ is an anti-self-adjoint operator on $Y$ with domain
$D(J)=Y_1$, endowed with the graph norm $|\cdot|_{Y_1}$, which generates a unitary group $e^{tJ}$. We further assume $|J^{-1}|_{L(Y,Y_1)}\leq C_0$.
\item[(A3)] For $k\geq1$,
\begin{eqnarray*}
(D^if,D^ig)&\in& C^0(X_1\times Y_1\times\mathbb{R}^2,L_i(X_1\times
Y_1,X_1\times Y_1)),\ 0\leq i\leq k, \\
(D^if,D^ig)&\in& C^0(X_1\times Y_1\times\mathbb{R}^2,\\
&&\hspace{1.5cm}L((X\times Y)\otimes^{i-1}(X_1\times Y_1),X\times
Y)), \ 1\leq i\leq k,
\end{eqnarray*}
whose norms are all bounded by $C_0$.
\item[(A4)] $|\cdot|_X\in C^k(X\backslash \{0\},\mathbb{R}^{+})$, where
$\mathbb{R}^{+}$ denotes the set of positive numbers.
\item[(A5)] $\partial_{\ep}f\in C^0(X_1\times
Y_1\times\mathbb{R}^2,X_1)$, $D_x\partial_{\ep}f\in C^0(X_1\times
Y_1\times\mathbb{R}^2,L(X_1,X_1))$, $D_x\partial_tf\in C^0(X_1\times
Y_1\times\mathbb{R}^2,L(Y_1,X))$, $\partial_tg\in C^0(X_1\times
Y_1\times\mathbb{R}^2,Y)$, $D\partial_tg\in C^0(X_1\times
Y_1\times\mathbb{R}^2,L(X_1\times Y_1,X\times Y))$ which are all bounded by $C_0$.
\end{enumerate}
Here the global boundedness are not important as we can always multiply them by a cut-off function. When unbounded, the linear operators often appear in the form of $\Delta$, $i\Delta$, or $\begin{pmatrix} 0 &1 \\ \Delta &0\end{pmatrix}$, etc. as coming from the linearization of PDEs \cite{Pa83}. The nonlinearities usually satisfy the above assumption on function spaces which are algebras or slightly better.
\begin{rem}\label{rem3.1}
In fact we can replace $J$ by $J(\ep)$ for each small $\ep$, then
all results in this manuscript still hold except Proposition \ref{P:appro} and \ref{thm4.20}.
\end{rem}

Throughout the manuscript, $C$ denotes a generic constant, possibly with subscripts, which may
have different values as in different lines, and it only
depends on the quantities involved in (A1)-(A5). Let $C'$ be another
generic constant, possibly with subscripts, and the dependence will
be specified in the context.

When $|y|<<1$, formally from \eqref{E:SPL0}, we expect the $x$ equation can be approximated by the singular limit
\begin{equation}\label{eq3.2}
\dot{x}_0=Ax_0+f(x_0,0,t,0).
\end{equation}
However, in the normally elliptic singular perturbation problems, there is usually not a persistent slow manifold and we will first prove the convergence to \eqref{eq3.2}.\\

\noindent {\bf Almost invariant slow manifolds.} In order to justify this limit, one need to estimate the $y$ equation in \eqref{eq3.1}. One of the key issues is to handle the $O(1)$ driving force $g(x, 0, t, \ep)$ which occurs even at $y=0$. It is very natural to first carry out a transformation
\begin{equation} \label{E:y_1}
y_1 = y + \ep J^{-1} g(x, 0, t, \ep)
\end{equation}
which yields
\begin{equation} \label{E:doty_1}
\dot y_1 = \frac  J\ep y_1 + g_1(x, y, t, \ep) \qquad g_1 (x, y, t,\ep) =O(\ep) + h(x, y_1, t, \ep)y_1.
\end{equation}
At $y=0$, the driving force in this equation $g_1(x, 0, t, \ep)=O(\ep)$. One may repeat this procedure and obtain
\[
\dot y_k = \frac  J\ep y_k + O(\ep^k) + O(|y_k|).
\]
Therefore this sequence of transformations yields an almost invariant slow manifold, close to $\{ y_k=0\}$, with an error (to the invariance in the equations) of $O(\ep^k)$. While this process increases the accuracy at the cost of the smoothness of the equation, it would not give an invariant slow manifold and also, in our general setting, the unbounded operators $A$ and $J$ without other assumptions could bring other complications. We will work directly
with \eqref{eq3.1} in most part of the manuscript. However in this manuscript, with these transformations in mind,
we actually often prove estimates with upper bounds in terms of $g(x, 0, t, \ep)$, so that they would yield much
finer estimates when combined with a sequence of transformations as in the above. For example, see Lemma \ref{L:FT},
Lemma \ref{L:FTL}, Proposition \ref{P:appro}, Remark \ref{R:Melnikov} to see statements of this type.

\begin{theorem}\label{thm3.1}
Assume (A1) -- (A3) and (A5). For any $t_0\in\mathbb{R}$, let $T>0$ and $(x(t),y(t))$, $x_0(t)$ be
solutions of \eqref{eq3.1} and \eqref{eq3.2} on $[t_0,t_0+T]$. Suppose
\[
|x(t_0)-x_0(t_0)|_{X_1}+|y(t_0)|_{Y_1}\leq C_1\ep,
\]
there exists a constant $C'$ which depends on $M,\omega,T,C_0,C_1$, and $|x_0(t_0)|_{X_1}$, such that
then for any $t\in[t_0,t_0+T]$,
\begin{equation}
|x(t)-x_0(t)|_{X_1}+|y(t)|_{Y_1}\leq C'\ep.
\end{equation}
\end{theorem}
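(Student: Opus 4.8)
The plan is to run a Gronwall-type argument on the difference $u = x - x_0$ together with the fast variable $y$, using the variation-of-constants formula for both equations and exploiting the fact that $e^{tJ/\ep}$ is a \emph{unitary} group, so that the singular factor $1/\ep$ produces no growth in $Y$. First I would write $u(t) = e^{(t-t_0)A}u(t_0) + \int_{t_0}^t e^{(t-s)A}\big(f(x(s),y(s),s,\ep) - f(x_0(s),0,s,0)\big)\,ds$; splitting the nonlinearity as $f(x,y,s,\ep)-f(x_0,0,s,0) = \big(f(x,y,s,\ep)-f(x_0,0,s,\ep)\big) + \big(f(x_0,0,s,\ep)-f(x_0,0,s,0)\big)$, the first bracket is $O(|u|_{X_1}+|y|_{Y_1})$ by the $C^1$ bound in (A3) and the second is $O(\ep)$ by the $\partial_\ep f$ bound in (A5). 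Using (A1), $|e^{tA}|\le Me^{\omega t}$, this gives $|u(t)|_{X_1} \le Me^{\omega T}C_1\ep + Me^{\omega T}\int_{t_0}^t C\big(|u(s)|_{X_1}+|y(s)|_{Y_1}+\ep\big)\,ds$ on $[t_0,t_0+T]$.

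The essential point is the estimate on $y$. Here I would \emph{not} use the naive variation-of-constants formula $y(t) = e^{(t-t_0)J/\ep}y(t_0) + \int_{t_0}^t e^{(t-s)J/\ep} g(x(s),y(s),s,\ep)\,ds$ directly, because $g(x,0,s,\ep)$ is only $O(1)$, not $O(\ep)$, and a crude bound would give $|y(t)| \lesssim |y(t_0)| + O(T)$, losing the factor $\ep$. Instead, following the "almost invariant slow manifold" discussion preceding the theorem, I would first substitute $y_1 = y + \ep J^{-1} g(x,0,t,\ep)$, which by \eqref{E:doty_1} satisfies $\dot y_1 = \frac{J}{\ep} y_1 + g_1$ with $g_1 = O(\ep) + h(x,y_1,t,\ep)y_1$ and $g_1(x,0,t,\ep) = O(\ep)$; note $|y_1(t_0)|_{Y_1} \le |y(t_0)|_{Y_1} + C\ep \le C\ep$ by (A2) ($|J^{-1}|_{L(Y,Y_1)}\le C_0$) and the hypothesis. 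Then the variation-of-constants formula for $y_1$, combined with unitarity of $e^{tJ/\ep}$ on $Y$ (and the analogous bound on $Y_1$, which follows since $J$ commutes with its own group and $|J^{-1}|_{L(Y,Y_1)}\le C_0$ lets one control the graph norm), yields $|y_1(t)|_{Y_1} \le |y_1(t_0)|_{Y_1} + \int_{t_0}^t \big(C\ep + C|y_1(s)|_{Y_1}(1+|u(s)|_{X_1})\big)\,ds$. The factor $|u|_{X_1}$ appearing through $h(x,\cdot)-h(x_0,\cdot)$ is harmless after one observes a priori that $|u|_{X_1}$ stays bounded on $[t_0,t_0+T]$ by a constant depending on $T$ and $|x_0(t_0)|_{X_1}$ (this is where the dependence of $C'$ on $|x_0(t_0)|_{X_1}$ enters — one first runs the argument with the cutoff making all bounds global, or equivalently with a bootstrap on a maximal subinterval).

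Finally I would add the two integral inequalities and set $\Phi(t) = |u(t)|_{X_1} + |y_1(t)|_{Y_1}$, obtaining $\Phi(t) \le C_2\ep + C_3\int_{t_0}^t \Phi(s)\,ds$ on $[t_0,t_0+T]$ with $C_2, C_3$ depending only on $M,\omega,T,C_0,C_1$ and (through the a priori bound on $|u|_{X_1}$) on $|x_0(t_0)|_{X_1}$; Gronwall then gives $\Phi(t)\le C_2 e^{C_3 T}\ep =: C'\ep$. Converting back, $|y(t)|_{Y_1} \le |y_1(t)|_{Y_1} + \ep|J^{-1}g(x,0,t,\ep)|_{Y_1} \le C'\ep$, which completes the estimate. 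The main obstacle I anticipate is purely technical: carefully justifying the graph-norm ($X_1\times Y_1$) estimates for the mild solutions when $A$ and $J$ are unbounded — one must check that the integrands genuinely lie in $X_1$ and $Y_1$ and that the $C^0$-into-$L_i(X_1\times Y_1, X_1\times Y_1)$ regularity in (A3) together with (A5) suffices to differentiate/bound things at the $X_1\times Y_1$ level, rather than just at the $X\times Y$ level; the $1/\ep$ singularity itself is defused entirely by the unitarity in (A2), so it causes no real difficulty.
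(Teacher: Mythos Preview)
Your approach is correct and rests on the same idea as the paper: gain the missing factor~$\ep$ in the $y$--estimate by exploiting the fast oscillation $e^{tJ/\ep}$. The only difference is in packaging. You implement the averaging via the explicit substitution $y_1 = y + \ep J^{-1}g(x,0,t,\ep)$ (which the paper itself advertises as \eqref{E:y_1}), whereas the paper, in Lemma~\ref{L:FT}, leaves $y$ alone and instead integrates the term $\int_{t_0}^t e^{(t-\tau)J/\ep} g(x_*,0,\tau,\ep)\,d\tau$ by parts against $\partial_\tau\big(-\ep J^{-1}e^{(t-\tau)J/\ep}\big)$; the boundary term this produces is exactly your $\ep J^{-1}g$, so the two computations are line-by-line equivalent. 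The paper also inserts two auxiliary comparison solutions---$x_*$ solving $\dot x_* = Ax_* + f(x_*,0,t,\ep)$ with $x_*(t_0)=x(t_0)$, and $(x_1,y_1)$ solving \eqref{eq3.1} with $y_1(t_0)=0$---and chains three easy Gronwall estimates ($x\!-\!x_1$, $x_1\!-\!x_*$ via Lemma~\ref{L:FT}, $x_*\!-\!x_0$); this modularity lets them state Lemma~\ref{L:FT} with the sharper right-hand side $C'\ep\big(|g_*^\ep|_{C_x^0C_t^1}+|D_xg_*^\ep|_{C^0}\big)$, which they reuse later. Your direct route to $x-x_0$ is shorter but does not isolate that refinement. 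The paper's stated reason for preferring integration by parts over the substitution is that the latter costs one derivative of $g$ in the transformed nonlinearity (see Remark~\ref{R:Melnikov}); for Theorem~\ref{thm3.1} itself this is harmless, but for the later linearized and higher-order estimates it matters.
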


A more careful estimate of approximations can be found in Lemma \ref{L:FT}. In addition to the convergence
of solutions on finite time interval,
we also need the convergence of solutions of the linearized
equations. Linearize \eqref{eq3.1} and we obtain
\begin{equation}\label{eq3.3}
\left\{\begin{aligned} &\dot{\delta x}=A\delta
x+D_xf(x,y,t,\ep)\delta
x+D_yf(x,y,t,\ep)\delta y\\
&\dot{\delta y}=\frac{J}{\ep}\delta y+D_xg(x,y,t,\ep)\delta
x+D_yg(x,y,t,\ep)\delta y, \end{aligned} \right.
\end{equation}
Let $\Phi(t,t_0,x,y,\ep)$ be the solution map of \eqref{eq3.1}. From the above equations assumptions (A1) --
(A3), and the Gronwall inequality, it is clear to see that $D\Phi$ is bounded uniformly in $\ep$. Higher order
derivatives of $\Phi$ in $x,y$ can be estimated in a similar way. In the leading order approximation of
\eqref{eq3.3}, we combine the linearized \eqref{eq3.2} and a linearized $y$ equation
\begin{equation}\label{eq3.4}
\left\{\begin{aligned} &\dot{\delta x}_0=A\delta
x_0+D_xf(x_0,0,t,0)\delta x_0 \\
&\dot{\delta y}_0=\frac{J}{\ep}\delta y_0+D_yg(x_0,0,t,0)\delta y_0.
\end{aligned} \right.
\end{equation}

\begin{theorem}\label{thm3.2}
Assume (A1) -- (A3) and (A5) for $k\ge2$. Let $(\delta x(t),\delta y(t))$ and $(\delta
x_0(t),\delta y_0(t))$ be solutions of \eqref{eq3.3} and
\eqref{eq3.4}, respectively. Suppose
\begin{eqnarray}\label{eq3.7}
&&|x(t_0)-x_0(t_0)|_{X_1}+|y(t_0)|_{Y_1}\leq C_1\ep,\\\label{eq3.8}
&& \ep(|\delta x_0(t_0)|_{X_1}+|\delta y_0(t_0)|_{Y_1}) + |\delta x(t_0)-\delta x_0(t_0)|_{X}+|\delta y(t_0)-\delta y_0(t_0)|_{Y_1}\leq C_1\ep.
\end{eqnarray}
Then there exists a
constant $C'$ which depends on $M,\omega,T,C_0,C_1,|x(t_0)|_{X_1}$,
such that
$$|\delta x(t)-\delta x_0(t)|_{X}+|\delta y(t)-\delta y_0(t)|_{Y_1}\leq
C'\ep$$ for all $t\in[t_0,t_0+T]$.
\end{theorem}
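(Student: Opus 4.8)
The plan is to run a Gronwall estimate on the difference variables $u=\delta x-\delta x_0$ and $v=\delta y-\delta y_0$, in which the dangerous forcing terms --- those kept in \eqref{eq3.3} but dropped in the leading system \eqref{eq3.4} --- are shown to be $O(\ep)$ by a single integration by parts that trades the rapid oscillation of $e^{tJ/\ep}$ for a factor of $\ep$. Before that I would record two facts. First, applying the variation-of-constants formula and Gronwall to the linear systems \eqref{eq3.3} and \eqref{eq3.4} --- using that $e^{tA}$ is a $C_0$-semigroup on $X$ which preserves $X_1$, that $e^{tJ/\ep}$ is a unitary group on both $Y$ and $Y_1$, and that the coefficient operators are bounded by $C_0$ --- shows that $\delta x,\delta x_0$ stay $O(1)$ in $X_1$ and $\delta y,\delta y_0$ stay $O(1)$ in $Y_1$ on $[t_0,t_0+T]$; in particular $\delta y_0$ does not decay, which is exactly why the leading system is a genuine approximation rather than a triviality. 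Second, Theorem \ref{thm3.1} gives $|x(t)-x_0(t)|_{X_1}+|y(t)|_{Y_1}\le C'\ep$, whence $|\tfrac J\ep y(t)|_Y\le\ep^{-1}|y(t)|_{Y_1}=O(1)$, so that $\dot x=Ax+f$ and $\dot y=\tfrac J\ep y+g$ are themselves $O(1)$, in $X$ and $Y$ respectively, along the true solution; combined with (A3)--(A5) for $k\ge2$ this makes the time-derivatives of all coefficients appearing below bounded.

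Subtracting \eqref{eq3.4} from \eqref{eq3.3} and writing the mild equations for $u$ (with propagator $e^{(t-s)A}$) and for $v$ (with propagator $e^{(t-s)J/\ep}$), I would sort the forcing into three kinds. The \emph{diagonal terms} $D_xf(x,y,s,\ep)u$, $D_yf(x,y,s,\ep)v$ and $D_yg(x,y,s,\ep)v$ are $O(1)$ bounded operators acting on $u$ or $v$ in the norms used, and go straight into the Gronwall loop. The \emph{coefficient-difference terms}, such as $[D_xf(x,y,s,\ep)-D_xf(x_0,0,s,0)]\,\delta x_0$ and $[D_yg(x,y,s,\ep)-D_yg(x_0,0,s,0)]\,\delta y_0$, are handled by a first-order Taylor expansion in $(x,y,\ep)$: by (A3)--(A5) and Theorem \ref{thm3.1} each operator difference is $O(\ep)$ in the relevant norm, so multiplication by the $O(1)$ quantities $\delta x_0,\delta y_0$ leaves an $O(\ep)$ contribution. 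What remains are the \emph{new oscillatory forcing terms}, present in \eqref{eq3.3} but absent from \eqref{eq3.4}: $\int_{t_0}^t e^{(t-s)A}D_yf(x,y,s,\ep)\,\delta y(s)\,ds$ in the $u$-equation, and $\int_{t_0}^t e^{(t-s)J/\ep}D_xg(x,y,s,\ep)\,\delta x(s)\,ds$ in the $v$-equation; their integrands are only $O(1)$, and these are the terms that must be dealt with by averaging.

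For the $v$-equation term I would use $e^{(t-s)J/\ep}=-\ep\,J^{-1}\partial_s e^{(t-s)J/\ep}$ (legitimate since $|J^{-1}|_{L(Y,Y_1)}\le C_0$) and integrate by parts; because $J^{-1}$ sends $Y$ into $Y_1$, the boundary terms and the remaining integral are automatically $O(\ep)$ in $Y_1$, provided $\tfrac{d}{ds}\big(D_xg(x,y,s,\ep)\,\delta x(s)\big)$ is bounded in $Y$ --- which it is, by the chain rule, the boundedness of $\dot x$, $\dot y$ and $\dot{\delta x}$, and $D^2g$. For the $u$-equation term I would split $\delta y=\delta y_0+v$ (the $v$-part being diagonal), substitute $\delta y_0=\ep J^{-1}\big(\dot{\delta y}_0-D_yg(x_0,0,s,0)\,\delta y_0\big)$ from \eqref{eq3.4}, and integrate by parts in the resulting $\int_{t_0}^t e^{(t-s)A}D_yf(x,y,s,\ep)\,J^{-1}\dot{\delta y}_0(s)\,ds$; the factor $Ae^{(t-s)A}$ this produces is harmless, since $A$ acts on $D_yf(x,y,s,\ep)\,J^{-1}\delta y_0(s)$, which lies in $X_1=D(A)$ (here one uses $D_yf\in L(Y_1,X_1)$ from (A3)), so $A$ commutes through the semigroup and $|e^{tA}|\le Me^{\omega t}$ suffices. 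Either way a factor $\ep$ is gained, and collecting all the estimates gives $|u(t)|_X+|v(t)|_{Y_1}\le\big(|u(t_0)|_X+|v(t_0)|_{Y_1}\big)+C'\ep+C\int_{t_0}^t\big(|u(s)|_X+|v(s)|_{Y_1}\big)\,ds$; since \eqref{eq3.8} makes the data $O(\ep)$, Gronwall yields the stated bound on $[t_0,t_0+T]$.

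The hard part is this last step: making the oscillatory integration by parts rigorous in the presence of the unbounded operators $A$ and $\ep^{-1}J$, and tracking which space each quantity inhabits ($X$ versus $X_1$, $Y$ versus $Y_1$), so that every occurrence of $A$ or of $\ep^{-1}J$ is either absorbed by $J^{-1}$ --- which buys $Y_1$-regularity for free --- or lands on a quantity that $D_yf$ or $D_yg$ has smoothed into $X_1$ or $Y_1$. The bounds $\dot x,\dot y=O(1)$ from Theorem \ref{thm3.1}, together with the hypothesis $k\ge2$ that lets the coefficients be differentiated once more in time, are exactly what make these estimates close.
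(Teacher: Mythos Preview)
Your proposal is correct and follows essentially the same route as the paper. The paper's proof factors through an intermediate linearization \eqref{E:x_*0L} around a solution $x_*$ of \eqref{E:x_*0}: it invokes Lemma \ref{L:FTL} to compare $(\delta x,\delta y)$ with $(\delta x_*,\delta y_*)$ (same initial data), and then a trivial Gronwall to pass from $(\delta x_*,\delta y_*)$ to $(\delta x_0,\delta y_0)$; you instead compare $(\delta x,\delta y)$ with $(\delta x_0,\delta y_0)$ directly. The key mechanism is identical in both: the $O(1)$ forcing $D_xg\,\delta x$ in the $v$-equation is tamed by integrating $e^{(t-s)J/\ep}$ by parts against $(J+\cdots)^{-1}$, and the $O(1)$ forcing $D_yf\,\delta y_0$ in the $u$-equation by writing $\delta y_0=\ep J^{-1}(\dot{\delta y}_0-D_yg\,\delta y_0)$ and integrating by parts --- this is exactly what happens inside the proof of Lemma \ref{L:FTL} (and the loss from $X_1$ to $X$ that you flag is precisely the content of Remark \ref{rem3.4}). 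The paper's extra layer of indirection is there because Lemma \ref{L:FTL} records bounds in terms of $g_*^\ep=g(\cdot,0,\cdot,\ep)$, which are reused later for the invariant-manifold approximations (Proposition \ref{P:appro}, Proposition \ref{thm4.20}); for Theorem \ref{thm3.2} by itself your direct argument is equally valid and slightly more economical.
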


We can not obtain the estimates on $|\delta x(t)-\delta x_0(t)|_{X_1}$ even if we assume $|\delta x(t_0)-\delta x_0(t_0)|_{X_1}\leq C_1\ep$ unless both $|\delta y_0(t_0)|_{Y_1} \le C_1 \ep$. See Lemma \ref{L:FTL}, Remark
\ref{rem3.4}, and Remark \ref{rem3.6}. These theorems will be proved in Section \ref{S:FT}.

To study the local invariant manifolds, suppose $(0, 0)$ is always a steady state and the limit systems is autonomous, i.e.
\[
\partial_t f(x,y,t,0)=\partial_t g(x,y,t,0)=0 \qquad f(0,0,t,\ep)=g(0,0,t,\ep)=0.
\]
We assume linearized \eqref{eq3.2} at $0$ has the exponential trichotomy, i.e. there exist closed subspaces $X^{u,s,c}$ such that there exist constants $a_1 < \min\{a_2, 0\}$ and $a_2' > \max\{0, a_1'\}$ and for $t\ge 0$,
\[\begin{split}
&|e^{t(A + f_x(0))}|_{X^s} \le C_1 e^{a_1t} \quad |e^{-t(A + f_x(0))}|_{X^u} \le C_1 e^{-a_2't} \\
&|e^{t(A + f_x(0))}|_{X^c} \le C_1 e^{a_1't} \quad |e^{-t(A + f_x(0))}|_{X^c} \le C_1 e^{-a_2t}.
\end{split}\]
Moreover, we assume the linearized flow $e^{t(\frac J\ep + g_y(0))}$ satisfies the same assumption as $e^{-t(A + f_x(0))}|_{X^c}$ and thus the expanded center space of \eqref{eq3.1} should be $X^c \oplus Y$. Along with a few other technical assumptions, rough our main results on invariant manifolds and foliations in the phase space $X_1 \times Y_1$ is

\begin{theorem} \label{T:main}
For $\ep<<1$, in the space $X_1 \times Y_1$,
\begin{enumerate}
\item There exists smooth invariant stable, unstable, center-stable, center-unstable, and center integral
    manifolds of $(0, 0)$ which can be written as graphs of smooth mappings from a $\delta$-neighborhood of the
    corresponding subspaces to the complements whose norms and $\delta$ are independent of $\ep$. Moveover their
    derivatives in $t_0$, the time parameter of integral manifolds, is $O(\ep)$ when evaluated in the norm $|\cdot|_X + |\cdot|_Y$.
\item The center-stable and center-unstable manifolds are foliated into the disjoint union of smooth families of
    smooth stable and unstable fibers which also written as graphs of mappings whose norms are bounded independent of $\ep$.
\item The stable and unstable manifolds are $O(\ep)$ close to those of \eqref{eq3.2}.
\item The center-stable, center-unstable, and the center manifolds at $\{y=0\}$ are $O(\ep)$ close to those of \eqref{eq3.2} and their tangent spaces there are $O(\ep)$ close to the direct sum of the unperturbed ones and
    $Y$, respectively.
\end{enumerate}
\end{theorem}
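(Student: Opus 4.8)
The plan is to build each of the five manifolds, and the stable/unstable fibers of the center-(un)stable ones, as graphs via the Lyapunov--Perron integral equation method, with the oscillatory averaging of Section~\ref{S:FT} inserted precisely where the fast phase $e^{t\frac J\ep}$ would otherwise destroy the $\ep$-uniformity. I would start with the routine preparation: after the block-diagonalization outlined in the Appendix, split $f = f_x(0)x + \tilde f$ and $g = g_y(0)y + \tilde g$ so the linear parts are $A + f_x(0)$ and $\frac J\ep + g_y(0)$ (whose flows have, by hypothesis, the exponential trichotomy on $X^{s,u,c}$ and the ``center-like'' bound on $Y$, all $\ep$-uniform), and multiply $\tilde f,\tilde g$ by a cutoff supported in $B_\delta(0,X_1\times Y_1)$. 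By (A3)--(A5) the cutoff nonlinearities are then globally bounded and globally Lipschitz with small constant, together with the needed $\p_t$ and $\p_\ep$ derivatives, uniformly in $\ep$; the whole content of the theorem is that $\delta$ may be taken independent of $\ep$, which is exactly what the naive time-rescaling argument fails to deliver.

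For, say, the center-unstable manifold I would look for trajectories on $(-\infty,0]$ in an exponentially weighted space with a sub-center growth rate, written through variation of constants using the spectral projections $P^{s,u,c}$ of $A+f_x(0)$ and grouping the $Y$-component into the center block: $x^u,x^c,y$ carry the data at $t=0$ together with $\int_0^t$, while the $P^s$-part is the slaved tail $\int_{-\infty}^t$. All terms but one are controlled just as in a regular perturbation problem. The exceptional term is $\int e^{(t-s)\frac J\ep}\tilde g(x(s),y(s),s,\ep)\,ds$, whose naive bound $\int|\tilde g|$ is only $O(\delta)$ and too crude to run a contraction with $\delta$ fixed. Here I would integrate by parts, $\int_\sigma^t e^{(t-s)\frac J\ep}h(s)\,ds = \ep J^{-1}h(t) - \ep e^{(t-\sigma)\frac J\ep}J^{-1}h(\sigma) - \ep\int_\sigma^t e^{(t-s)\frac J\ep}J^{-1}\dot h(s)\,ds$ --- this is nothing but the transformation \eqref{E:y_1} --- which, using $|J^{-1}|_{L(Y,Y_1)}\le C_0$ and the $\p_t$-bounds in (A5) to estimate $\dot h$ in $Y$ (note that $\dot y=\frac J\ep y+g$ resurfaces here, but is absorbed because $D_y\tilde g$ vanishes at the origin and $y$ will be small), gains a genuine factor $\ep$. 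Consequently the $y$-component of the graph comes out $O(\ep)$, the contraction constant stays small with $\delta$ fixed, and on the expanded center block the estimates differ from those of \eqref{eq3.2} by only $O(\ep)$. The stable, unstable, center-stable and center manifolds are handled identically, with the obvious change of half-line and weight.

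Smoothness of the graphs would be obtained by the standard formal-differentiation plus fiber-contraction argument: the putative derivatives solve integral equations governed by the linearized system \eqref{eq3.3}, with \eqref{eq3.4} as the leading order, these are again $\ep$-uniform contractions, and uniqueness upgrades continuity of the fixed point to $C^k$; one must carry two scales of norms throughout, getting $X_1\times Y_1$ regularity of the manifold but only $X\times Y$ control of the $t_0$-derivative, since a $t$-derivative pushes $f,g$ into the weaker spaces exactly as recorded in the targets of (A5). That $\p_{t_0}$ of the manifolds is $O(\ep)$ in $|\cdot|_X+|\cdot|_Y$ follows because the limit is autonomous, so the $t$-dependence of the whole right-hand side enters only through $\p_t f,\p_t g$, which vanish at $\ep=0$ and are hence $O(\ep)$. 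Statements (3)--(4) on $O(\ep)$-closeness to the manifolds of \eqref{eq3.2} I would obtain by subtracting the $\ep=0$, $y\equiv0$ integral equation and estimating the difference with the same integration-by-parts bound together with the finite-time averaging estimates of Theorems~\ref{thm3.1}--\ref{thm3.2} and Lemmas~\ref{L:FT}, \ref{L:FTL}; closeness of the tangent spaces at $\{y=0\}$ follows by differentiating that comparison. Finally, part (2): for each base point one constructs its stable (resp.\ unstable) fiber inside $M^{cs}$ (resp.\ $M^{cu}$) by a parallel Lyapunov--Perron scheme relative to the base trajectory, the same oscillation estimate again providing the uniform bounds and smoothness in the base point.

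The step I expect to be the real obstacle is precisely this oscillatory $y$-block. The invariant-manifold fixed point lives on an infinite time interval, whereas averaging is intrinsically a finite-time mechanism; one cannot iterate the integration by parts because it costs derivatives and $A,J$ are unbounded, so a single such step must be made to close all the estimates in cooperation with the exponential weights and the smallness of the cutoff remainder. Making this quantitative --- in particular checking that the $O(1)$, non-small term $g_y(0)y$ sitting inside the fast evolution, and the $\frac J\ep y$ that reappears when one differentiates $\tilde g$ along the flow, do not inflate the $y$-component past $O(\ep)$ --- is the delicate core of the argument, and it is the reason the conclusions are naturally phrased with the $X_1\times Y_1$ versus $X\times Y$ dichotomy.
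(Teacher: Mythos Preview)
Your high-level plan matches the paper's, but the step you label ``routine preparation'' is a real gap. The block-diagonalization of the Appendix requires either $A$ bounded (Lemma~\ref{lemma7.1}) or the large-spectral-gap condition~\eqref{eq7.6} (Lemma~\ref{lemma7.2}), neither of which is among the hypotheses behind Theorem~\ref{T:main}. Without it the linear cross terms $f_y\,y$ and $g_x\,x$ survive with $O(1)$ coefficients; they cannot be made small by shrinking $\delta$, so the Lyapunov--Perron contraction as you describe it does not close.

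The paper handles these $O(1)$ cross terms not by eliminating them but through the choice of norm in~\eqref{eq4.6}: an auxiliary small parameter $\eps$ weights the $y$-component, and $|\dot x|_X$ is carried as part of the norm. Then $f_y\,y$ contributes $O(\eps)$ in the $x$-estimate, while your integration-by-parts idea is applied to the \emph{linear} term $\int e^{(t-\tau)(\frac J\ep+g_y)}g_x\,x\,d\tau$, producing $g_x\dot x$---controlled because $\dot x$ is in the norm---and gaining a factor $\ep/\eps$; see the proof of Lemma~\ref{lemma4.2} and the smallness condition~\eqref{eq4.7}, where $\eps$ is fixed small first and then $\bar r,\ep$ chosen accordingly. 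The cut-off nonlinear $G$ needs no averaging for the contraction---its Lipschitz constant $\bar r$ is already small---so your worry that ``$O(\delta)$ is too crude'' is misplaced; the averaging is what tames the non-small linear cross term you never isolated. (A variant $\wtt$, with the integration by parts already performed in its definition, is introduced in~\eqref{eq4.41} precisely to avoid manipulating $\dot x$ in the later regularity and $t_0$-dependence arguments.)

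The remainder of your outline---smoothness via the differentiated fixed-point equation, $\partial_{t_0}=O(\ep)$ from $\partial_t(f,g)|_{\ep=0}=0$ (Propositions~\ref{thm4.14} and~\ref{thm4.26}), the $O(\ep)$ comparison with the manifolds of~\eqref{eq3.2} via Proposition~\ref{P:appro}, and the fiber construction by a parallel Lyapunov--Perron scheme relative to a base trajectory (Section~\ref{S:InFo})---is correct and follows the paper.
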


Here by the term an integral manifold, we mean a family of manifold $M(t)$ parameterized by $t$ so that
the solution map of \eqref{eq3.1} starting at initial time $t_0$ and ending at $t_1$ maps $M(t_0)$ into
$M(t_1)$. They are independent of $t$ if the system is autonomous. The precise statement of these results
of the invariant manifolds are given in Section \ref{S:InMa} and \ref{S:InFo}.

\section{The singular limit system on finite time intervals} \label{S:FT}

The basic idea to handle the singular terms in the proofs of Theorem \ref{thm3.1} and \ref{thm3.2} is to average in time which appears in the estimate as integration by parts. Instead of \eqref{eq3.2}, we consider the following
regular perturbation problem as an initial approximation
\begin{equation} \label{E:x_*0}
\dot x_* = Ax_* + f(x_*, 0, t, \ep).
\end{equation}
In the rest of this section, we will use the notation
\begin{equation} \label{E:g_*}
g_*^\ep (x, t)= g(x, 0, t, \ep).
\end{equation}

\begin{lemma} \label{L:FT}
Assume (A1) -- (A3) and (A5). For any $t_0\in\mathbb{R}$, let $T>0$ and $(x(t),y(t))$ and $x_*(t)$ be solutions of
\eqref{eq3.1} and \eqref{E:x_*0} on $[t_0,t_0+T]$ such that $x(t_0)=x_*(t_0)$ and $y(t_0)=0$. Then there exists a
constant $C'$ which depends on $M,\omega,T,C_0$, and $|x_*(t_0)|_{X_1}$, such that for any $t \in [t_0,t_0+T]$,
\[ \begin{split}
|x(t)-x_*&(t)|_{X_1}+|y(t)|_{Y_1} \leq  C'\ep \big(|g_*^\ep|_{C_x^0C_t^1 (X_1 \times \R, Y)} + |D_x g_*^\ep|_{C^0 (X_1 \times \R, L(X, Y))}\big)
\end{split}\]
where $C_x^0 C_t^1$ denotes the space of functions $C^1$ in $t$ and $C^0$ in $x$.
\end{lemma}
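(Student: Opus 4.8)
The plan is to treat the $x$-equation as a regular perturbation of \eqref{E:x_*0} whose forcing involves $y$, and to treat the $y$-equation by exploiting the oscillatory factor $e^{tJ/\ep}$ through integration by parts in time (the ``averaging as integration by parts'' idea announced in the section preamble). Write $u = x - x_*$ and use the variation of constants formula for $u$ driven by $A$ and by the increments $f(x,y,t,\ep) - f(x_*,0,t,\ep)$, which by (A3) are controlled by $C_0(|u|_{X_1} + |y|_{Y_1})$. This gives, on $[t_0,t_0+T]$,
\[
|u(t)|_{X_1} \le C\int_{t_0}^t \big(|u(s)|_{X_1} + |y(s)|_{Y_1}\big)\,ds,
\]
so that by Gronwall it suffices to produce a bound of the desired order $O(\ep)$ on $\int_{t_0}^t |y(s)|_{Y_1}\,ds$ (or on $\sup|y|_{Y_1}$) in terms of the two norms of $g_*^\ep$ appearing in the statement.

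The heart of the matter is estimating $y$. First I would split $g$ as $g(x,y,t,\ep) = g_*^\ep(x,t) + \big(g(x,y,t,\ep) - g(x,0,t,\ep)\big)$, the second term being $O(|y|_{Y_1})$ by (A3). Then $y(t) = \int_{t_0}^t e^{(t-s)J/\ep}\big(g_*^\ep(x(s),s) + R(s)\big)\,ds$ with $|R(s)|_{Y_1} \le C_0|y(s)|_{Y_1}$, and since $e^{\cdot J/\ep}$ is unitary the remainder contributes only $C\int_{t_0}^t |y(s)|_{Y_1}\,ds$, to be absorbed by Gronwall. For the main term, the key identity is $e^{(t-s)J/\ep} = -\ep\,\partial_s\big(J^{-1} e^{(t-s)J/\ep}\big)$, valid since $J^{-1}$ is bounded by (A2). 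Integrating by parts in $s$,
\[
\int_{t_0}^t e^{(t-s)J/\ep} g_*^\ep(x(s),s)\,ds = -\ep\Big[ J^{-1}e^{(t-s)J/\ep} g_*^\ep(x(s),s)\Big]_{s=t_0}^{s=t} + \ep\int_{t_0}^t J^{-1} e^{(t-s)J/\ep}\,\frac{d}{ds}\big(g_*^\ep(x(s),s)\big)\,ds.
\]
The boundary terms are $O(\ep)\,|J^{-1}|\,|g_*^\ep|_{C^0_xC^0_t(X_1\times\R,Y)}$. In the integral, $\tfrac{d}{ds} g_*^\ep(x(s),s) = \partial_t g_*^\ep(x(s),s) + D_x g_*^\ep(x(s),s)\dot x(s)$; the first piece is bounded by $|g_*^\ep|_{C^0_xC^1_t}$ and the second by $|D_x g_*^\ep|_{C^0(X_1\times\R,L(X,Y))}$ times $|\dot x(s)|_X$, where $\dot x = Ax + f$ is bounded in the $X$-norm on the finite interval by a constant depending on $M,\omega,T,C_0,|x_*(t_0)|_{X_1}$ (this is why the weaker norm $L(X,Y)$ suffices for $D_xg_*^\ep$, and why the constant $C'$ depends on $|x_*(t_0)|_{X_1}$). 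Multiplying by the bounded factor $\ep|J^{-1}|$ and using unitarity of $e^{\cdot J/\ep}$ yields $|y(t)|_{Y_1} \le C'\ep\big(|g_*^\ep|_{C^0_xC^1_t} + |D_xg_*^\ep|_{C^0}\big) + C\int_{t_0}^t |y(s)|_{Y_1}\,ds + C\int_{t_0}^t|u(s)|_{X_1}\,ds$.

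Finally I would combine the two estimates: adding the inequality for $|u|_{X_1}$ and the one for $|y|_{Y_1}$ and applying Gronwall on $[t_0,t_0+T]$ closes the loop and gives the stated bound, with $C'$ depending only on $M,\omega,T,C_0,|x_*(t_0)|_{X_1}$. A technical point I would be careful about: all the integration-by-parts manipulations involve $J^{-1}$ and $\partial_t g_*^\ep$ applied to $X_1$-valued arguments landing in $Y$, and one must check using (A2) and (A5) that every quantity lives in the right space ($e^{tJ/\ep}$ maps $Y\to Y$ and $Y_1\to Y_1$ unitarily, and $J^{-1}: Y\to Y_1$ is bounded, so $J^{-1}e^{(t-s)J/\ep}g_*^\ep \in Y_1$, which is what makes the $|y|_{Y_1}$ norm on the left legitimate rather than merely $|y|_Y$). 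I expect this bookkeeping — tracking which norm ($X$ versus $X_1$, $Y$ versus $Y_1$) is available at each step, especially getting the $X_1$-norm on $u$ out while only spending the $X$-norm on $\dot x$ — to be the main obstacle; the averaging estimate itself is the one genuinely new ingredient but is conceptually straightforward once the integration-by-parts identity is in hand.
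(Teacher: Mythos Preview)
Your proposal is correct and follows essentially the same approach as the paper: variation of constants for both components, integration by parts on the oscillatory integral $\int e^{(t-s)J/\ep} g_*^\ep\,ds$ via $e^{(t-s)J/\ep}=-\ep\,\partial_s(J^{-1}e^{(t-s)J/\ep})$ to extract the factor of $\ep$, then a coupled Gronwall. The only cosmetic difference is that the paper splits off $g_*^\ep(x_*(\tau),\tau)$ evaluated along the \emph{approximate} trajectory when integrating by parts, so the time derivative there involves $\dot x_* = Ax_* + f(x_*,0,\tau,\ep)$ (automatically bounded on $[t_0,t_0+T]$), whereas you split off $g_*^\ep(x(\tau),\tau)$ along the full trajectory and invoke an a priori bound on $|\dot x|_X$ from the global boundedness of $f$ in (A3); both choices work equally well here.
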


\begin{proof}
By \eqref{eq3.1}, \eqref{eq3.2} and variation of parameters formula
\begin{eqnarray}
&(x-x_*)(t)=\int_{t_0}^te^{(t-\tau)A}(f(x,y,\tau,\ep)-f(x_*,0,\tau,\ep))d\tau, \notag\\
&y(t)=\int_{t_0}^te^{(t-\tau)\frac{J}{\ep}} (g(x,y,\tau,\ep)-g(x_*,0,\tau,\ep) + g(x_*,0,\tau,\ep)d\tau.
\label{E:y_*}
\end{eqnarray}
Due to the oscillatory nature of $e^{t\frac{J}{\ep}}$, we integrate the last terms by parts
\begin{align*}
&\int_{t_0}^te^{(t-\tau)\frac{J}{\ep}}g(x_*,0,\tau,\ep)d\tau = 
-e^{(t-\tau)\frac{J}{\ep}}\ep J^{-1}g(x_*,0,\tau,\ep)|_{t_0}^t\\
&\qquad +\int_{t_0}^te^{(t-\tau)\frac{J}{\ep}}\ep J^{-1} \big( \partial_tg(x_*,0,\tau,\ep)d\tau
+ D_xg(x_\ep,0,\tau,\ep)\big(Ax_*+f(x_*,0,\tau,\ep)\big) \big)d\tau,
\end{align*}
where we also use assumption (A3) to ensure the last term on the right hand side is well defined. Therefore,
\[\begin{split}
\Big|\int_{t_0}^te^{(t-\tau)\frac{J}{\ep}} &g(x_*,0,\tau,\ep)d\tau\Big|_{Y_1}\leq C'\ep
\big(|g_*^\ep|_{C_x^0 C_t^1 (X_1 \times \R, Y)} + |D_x g_*^\ep|_{C^0 (X_1 \times \R, L(X, Y))}\big),
\end{split}\]
where $C'$ depends on $C_0$ and $|x_*|_{X_1}$. Consequently,
\begin{eqnarray*}
(|x-x_*|_{X_1}&+|y|_{Y_1})(t)\leq C \int_{t_0}^t (|x-x_*|_{X_1}+|y|_{Y_1}) (\tau)d\tau
+ C'\ep \big(|g_*^\ep|_{C_x^0C_t^1 (X_1 \times \R, Y)}\\
&+|D_x g_*^\ep|_{C^0 (X_1 \times \R, L(X, Y))}\big).
\end{eqnarray*}
Then the desired estimates follows from the Gronwall's inequality.
\end{proof}

\begin{proof} {\it of Theorem \ref{thm3.1}}
Let $(x_1(t), y_1(t))$ be the solution of \eqref{eq3.1} with the
initial values $x_1(t_0) = x(t_0)$ and $y_1(t_0) =0$ and $x_*(t)$ be
the solution of \eqref{E:x_*0} such that $x_*(t_0) = x(t_0)$. On the
one hand, from Lemma \ref{L:FT}, for any $t \in [t_0, t_0+T]$,
\[
|x_1 - x_*|_{X_1} + |y_1|_{Y_1} \le C' \ep.
\]
On the other hand, by using the variation of parameter formula and the Gronwall's inequality, it is straight
forward to show, for any $t \in [t_0, t_0+T]$,
\[
|x- x_1|_{X_1} + |y-y_1|_{Y_1} + |x_* - x_0|_{X_1} \le C'\ep
\]
and thus the theorem follows.
\end{proof}

\begin{rem} \label{R:FT}
Combining Lemma \ref{L:FT} with the iteration of the type of the transformations \eqref{E:y_1}, we may
obtain asymptotic expansions of solutions of \eqref{eq3.1} with the leading order term given by solutions of
\eqref{E:x_*0} and the error of $O(\ep^k)$.
\end{rem}

For the linearization, we consider the following as the principle approximation
\begin{equation} \label{E:x_*0L} \begin{cases}
\dot{\delta x}_*=A\delta x_*+D_xf(x_*,0,t,\ep)\delta x_* \\
\dot{\delta y}_*=\frac{J}{\ep}\delta y_*+D_yg(x_*,0,t,\ep)\delta y_*.
\end{cases} \end{equation}

\begin{lemma} \label{L:FTL}
Assume (A1) -- (A3) and (A5) for $k=2$ and use the same notations as in Lemma \ref{L:FT}. Let $(\delta x(t),
\delta y(t))$ and $(\delta x_*(t), \delta y_*(t))$ be the solutions of \eqref{eq3.3} and \eqref{E:x_*0L} respectively
such that
\[
(\delta x(t_0), \delta y(t_0)) = (\delta x_*(t_0), \delta y_*(t_0)) \text{ and } |\delta x(t_0)|_{X_1}
+ |\delta y(t_0)|_{Y_1} \le 1.
\]
Then there exists a constant $C'$ depending on $M,\omega,T,$ and $C_0$, such that for any $t \in [t_0,t_0+T]$,
\[ \begin{split}
|\delta y(t) - \delta y_*(t)|_{Y_1} \leq C'\ep \Big(&|g_*^\ep|_{C_x^0 C_t^1 (X_1 \times \R, Y)}
+|D_x g_*^\ep|_{C_x^1 C_t^0 (X_1 \times \R, L(X, Y))} \\
&+ |D_x \p_t g_*^\ep|_{C^0 (X_1 \times \R, L(X_1,Y))} \Big).
\end{split}\]
Plus, $|\delta x(t) -\delta x_*(t)|_{X_1} \leq C' \ep$ if $|\delta y_*(t_0)|_{Y_1} \le C_1 \ep$. Otherwise
$|\delta x(t)-\delta x_*(t)|_{X} \le C'\ep$.
\end{lemma}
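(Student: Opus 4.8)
The plan is to mimic the structure of the proof of Lemma \ref{L:FT}, but now applied to the linearized system \eqref{eq3.3} with the linearized principle approximation \eqref{E:x_*0L}. Write $\delta x - \delta x_* = \int_{t_0}^t e^{(t-\tau)A}\big(D_xf(x,y,\tau,\ep)\delta x + D_yf(x,y,\tau,\ep)\delta y - D_xf(x_*,0,\tau,\ep)\delta x_*\big)\,d\tau$ and the analogous variation-of-parameters formula for $\delta y - \delta y_*$. The difference of the $f$-terms (resp.\ $g$-terms) splits in the usual way into a part controlled by $|\delta x - \delta x_*| + |\delta y - \delta y_*|$ times a bounded coefficient, a part controlled by $(|x - x_*|_{X_1} + |y|_{Y_1})(|\delta x_*| + |\delta y_*|)$ coming from the difference of the coefficient matrices evaluated at $(x,y)$ versus $(x_*,0)$, and — in the $\delta y$ equation only — the ``driving'' term $D_x g_*^\ep(x_*,\tau)\,\delta x_* + D_y g_*^\ep(x_*,\tau)\,\delta y_*$. (Note $D_y g$ at $y=0$ does not vanish, but it multiplies $\delta y_*$ which is $O(1)$; more importantly it is the term $D_x g_*^\ep \delta x_* + D_y g_*^\ep \delta y_*$, hit by the oscillatory semigroup, that must be handled by averaging.) By Theorem \ref{thm3.1} / Lemma \ref{L:FT}, the coefficient-difference terms are already $O(\ep)$ times the bounded quantity $|\delta x_*|_{X_1} + |\delta y_*|_{Y_1}$, so they contribute at the desired order once we know $|\delta x_*|, |\delta y_*|$ stay bounded on $[t_0, t_0+T]$, which follows from (A1)--(A3) and Gronwall applied to \eqref{E:x_*0L}.

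The heart of the matter is the oscillatory integral $\int_{t_0}^t e^{(t-\tau)\frac{J}{\ep}}\big(D_x g(x_*,0,\tau,\ep)\delta x_*(\tau) + D_y g(x_*,0,\tau,\ep)\delta y_*(\tau)\big)\,d\tau$. As in Lemma \ref{L:FT}, integrate by parts, absorbing one factor of $e^{(t-\tau)\frac{J}{\ep}}$ into $-\ep J^{-1}\partial_\tau$. The boundary terms give $\ep J^{-1}$ times $(D_x g_*^\ep \delta x_* + D_y g_*^\ep \delta y_*)$ evaluated at the endpoints, which is $O(\ep)$ in $Y_1$ using $|J^{-1}|_{L(Y,Y_1)} \le C_0$ (assumption (A2)) together with the boundedness of $D g_*^\ep$ and of $\delta x_*, \delta y_*$. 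The integral term produces $\partial_\tau$ hitting each factor: $\partial_\tau\big(D_x g_*^\ep(x_*,\tau)\big) = D_x\partial_t g_*^\ep + D_x^2 g_*^\ep (\dot x_*)$ where $\dot x_* = Ax_* + f(x_*,0,\tau,\ep)$ involves the unbounded operator $A$ — this is exactly why the hypothesis asks for $|D_x\partial_t g_*^\ep|_{C^0(X_1\times\R, L(X_1,Y))}$ and $|D_x g_*^\ep|_{C^1_x C^0_t}$ (the extra $x$-derivative to pair with $Ax_* \in X$, which only lands in $X$, forcing us to use the $L(X,Y)$ bound), and why the overall estimate is stated in terms of these particular norms rather than a cruder one; the terms $\partial_\tau(\delta x_*) = A\delta x_* + D_xf(x_*,0,\tau,\ep)\delta x_*$ and $\partial_\tau(\delta y_*) = \frac{J}{\ep}\delta y_* + D_y g(x_*,0,\tau,\ep)\delta y_*$ are also unbounded, but $\ep J^{-1} \cdot \frac{J}{\ep}\delta y_* = \delta y_*$ is harmless, $\ep J^{-1} A \delta x_*$ needs $\delta x_* \in X_1$ and $J^{-1}A$ to make sense, which is fine because $\ep J^{-1}$ maps into $Y_1$ and we only need the result in $Y$ for the Gronwall argument on that component (and in $Y_1$ for the boundary terms, which don't involve $A$). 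Collecting, the oscillatory integral is bounded by $C'\ep$ times the displayed combination of norms of $g_*^\ep$.

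Finally, assemble: in the equation for $|\delta x - \delta x_*|$ there is no oscillatory gain, only Gronwall, so we get $|\delta x(t) - \delta x_*(t)|_{X_1} \le C\int_{t_0}^t(\cdots)\,d\tau + C'\ep(|\delta x_*| + |\delta y_*|)$; since $|\delta y_*|_{Y_1}$ is only $O(1)$ in general, this shows $|\delta x - \delta x_*|_{X_1} \le C'\ep$ \emph{only} when $|\delta y_*(t_0)|_{Y_1} \le C_1\ep$ (which propagates to $|\delta y_*(t)|_{Y_1} \le C'\ep$ on the finite interval), and otherwise we must measure the coefficient-difference term $|D_yf(x,y,\tau,\ep)\delta y - D_yf(x_*,0,\tau,\ep)\delta y_*|$ in the weaker norm $X$ (using the $L((X\times Y)\otimes(X_1\times Y_1), X\times Y)$ part of (A3)) to get $|\delta x - \delta x_*|_X \le C'\ep$; this dichotomy is precisely what the lemma records. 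Run Gronwall on $|\delta x - \delta x_*|_{X_1} + |\delta y - \delta y_*|_{Y_1}$ (or the $X$-version) to close. The main obstacle is the bookkeeping of unbounded operators in the integration by parts — ensuring that every term produced by $\partial_\tau$ genuinely lands in a space where $\ep J^{-1}$ recovers a factor of $\ep$ in $Y$ or $Y_1$, which is what dictates the exact function-space norms appearing in the statement; once that is set up correctly the rest is Gronwall.
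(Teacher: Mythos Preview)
Your overall strategy for the $\delta y-\delta y_*$ estimate is essentially the paper's: isolate the oscillatory driving term, integrate by parts against $e^{(t-\tau)J/\ep}$ (the paper uses the evolution operator of $\frac{J}{\ep}+D_yg_*^\ep$ and the inverse $(J+\ep D_yg_*^\ep)^{-1}$, but this is a cosmetic difference), then Gronwall. One bookkeeping slip: the term $D_yg_*^\ep(x_*,\tau)\,\delta y_*$ is \emph{not} a driving term in the difference equation---it is exactly what appears in the $\delta y_*$ equation and hence cancels. The only genuine $O(1)$ driving term in the $\delta y-\delta y_*$ equation is $D_xg_*^\ep\,\delta x$ (or $\delta x_*$, up to a Gronwall-able difference). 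Your remark that ``$\ep J^{-1}\cdot\tfrac{J}{\ep}\delta y_*=\delta y_*$ is harmless'' is a symptom of this slip: that identity means integration by parts returns the same integral, so it would \emph{not} be harmless if the term were actually present.

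The real gap is in the $\delta x-\delta x_*$ estimate when $|\delta y_*(t_0)|_{Y_1}$ is $O(1)$. After the natural splitting, the forcing in the $\delta x-\delta x_*$ equation contains $D_yf(x,y,\tau,\ep)\,\delta y_*(\tau)$, which is $O(1)$ in $X_1$ (it is not a coefficient-difference term---there is no $D_yf\,\delta y_*$ in the $\delta x_*$ equation to subtract). Simply ``measuring in the weaker norm $X$'' via (A3) does not produce a factor of $\ep$; you still have an $O(1)$ contribution and Gronwall closes to $O(1)$, not $O(\ep)$. The paper's device here is to use the $\delta y_*$ equation in the form
\[
\delta y_*=\ep J^{-1}\dot{\delta y}_*-\ep J^{-1}D_yg_*^\ep\,\delta y_*,
\]
substitute into $\int_{t_0}^t e^{(t-\tau)A}D_yf(\cdots)\,\delta y_*\,d\tau$, and integrate by parts in $\tau$. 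This gains the factor $\ep$, but the differentiation falls on $e^{(t-\tau)A}$ and produces $Ae^{(t-\tau)A}$, which is why the estimate lands only in $X$ and not $X_1$ (cf.\ Remark~\ref{rem3.4}). Your proposal does not contain this step, and without it the $X$-norm conclusion cannot be reached.
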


\begin{proof}
By standard semigroup theory in Banach space and (A2), we have
\begin{equation}\label{eq3.9}
\begin{aligned}
\Big(|\big(\delta x, \delta y\big)| + |\big(\delta x_*, \delta y_* \big)|\Big)_{C^0([t_0,t_0+T], X_1 \times Y_1)} \leq C'.
\end{aligned}
\end{equation}
First we use \eqref{eq3.3} and \eqref{E:x_*0L} to obtain
\[
\begin{aligned}
&\dot{\delta x}-\dot{\delta x}_*=A(\delta x-\delta x_*)+\big(D_xf(x,y,t,\ep)
-D_xf(x_*,0,t,\ep)\big)\delta x_*\\
& \qquad +D_xf(x,y,t,\ep)(\delta x-\delta x_*)+D_yf(x,y,t,\ep)(\delta y-\delta y_*)
+D_yf(x,y,t,\ep)\delta y_*\\
&\dot{\delta y}-\dot{\delta y}_*=(\frac{J}{\ep}+D_yg(x_*(t),0,t,\ep))(\delta y-\delta
y_*)+D_xg(x_*,0,t,\ep)\delta x\\
&\qquad  +\big(D_xg(x,y,t,\ep)-D_xg(x_*,0,t,\ep)\big)\delta
x+(D_yg(x,y,t,\ep)-D_yg(x_*,0,t,\ep))\delta y.
\end{aligned}
\]
By using assumption (A3) and \eqref{eq3.3}, we can write
\begin{eqnarray*}
\dot{\delta x}-\dot{\delta x_*}&=&A(\delta x-\delta x_*)+h_1(t,\ep)+D_yf(x,y,t,\ep)\delta y_*,\\
\dot{\delta y}-\dot{\delta y}_*&=&(\frac{J}{\ep}+D_yg(x_*(t),0,t,\ep))(\delta y-\delta y_*)+
h_2(t,\ep) +D_xg(x_*,0,t,\ep)\delta x,
\end{eqnarray*}
where by \eqref{eq3.9},
\[\begin{split}
&|h_1|_{X_1} \leq C_0(|\delta x-\delta x_*|_{X_1} +|\delta y-\delta y_*|_{Y_1}) + C' (|x-x_*|_{X_1} + |y|_{Y_1}) \\
& |h_1|_X \leq C_0(|\delta x-\delta x_*|_X +|\delta y-\delta y_*|_Y) + C' (|x-x_*|_{X_1} + |y|_{Y_1}) \\
&|h_2|_{Y_1}\leq C' (|x-x_*|_{X_1} + |y|_{Y_1}).
\end{split}\]
In the rest of the proof, we will simply write
\[
D_y g_*^\ep (t) \triangleq D_yg_*^\ep(x_*(t),t) = D_y g(x_*(t), 0, t, \ep)
\]
and similarly for $f$, $g$, and $g_x$ etc. By assumptions (A2) and (A3), $\frac{J}{\ep}+g_y$ generates an evolution
operator $E_*(t,s)$ or $E_*(t,s;x_*(s),\ep)$ which satisfies for $t\geq s$,
\[\begin{split}
&\p_s E(t,s) = - E(t,s) (\frac{J}{\ep}+D_y g_*^\ep(s)) \qquad
|E|_{L(Y,Y)} + |E|_{L(Y_1,Y_1)}\leq C_0e^{C_0(t-s)}.
\end{split}\]
The most troublesome term in the second equation is $g_x \delta x$. We use \eqref{eq3.9} and integrate by
parts to obtain
\begin{eqnarray*}
&&\Big|\int_{t_0}^tE(t,\tau) D_x g_*^\ep (\tau) \delta x (\tau)\ d\tau\Big|_{Y_1} = \ep \Big| E(t,t_0) (J +\ep
D_y g_*^\ep (t_0))^{-1} D_x g_*^\ep(t_0)\delta x(t_0) \\
&& \qquad   -(J +\ep D_y g_*^\ep (t))^{-1}D_x g_*^\ep (t) \delta x(t) +\int_{t_0}^t E(t,\tau) \p_\tau \Big(
(J + \ep D_y g_*^\ep)^{-1} D_x g_*^\ep \delta x \Big) d\tau)\Big|_{Y_1}
\end{eqnarray*}
From assumptions (A3) and (A5) and Theorem \ref{thm3.1} and equations \eqref{eq3.3} and \eqref{eq3.9}, we obtain
\[\begin{split}
\Big|\int_{t_0}^tE(t,\tau) D_x g_*^\ep (\tau) &\delta x (\tau) d\tau\Big|_{Y_1} \le C' \ep \Big(
|D_x g_*^\ep|_{C^0(X_1 \times \R, L(X, Y))} \\
&+ |D_x^2 g_*^\ep|_{C^0(X_1 \times \R, L(X \otimes X_1, Y))} + |\p_t D_x g_*^\ep|_{C^0(X_1 \times \R,
L(X_1, Y))} \Big),
\end{split}\]
which, along with the variation of parameter formula, the estimate on $h_2$, and the Gronwall inequality,
implies the desired estimate on $\delta y - \delta y_*$.

When $|\delta_* (t_0)|_{Y_1} \le C_1\ep$, it is clear $|\delta y_*(t)|_{Y_1} \leq C'\ep$ and thus the estimates
on $\delta x - \delta x_*$ also easily follows from the variation of parameter formula, the estimate on $h_1$,
and the Gronwall inequality. Otherwise, to deal with the most trouble term $D_yf(x,y,t,\ep)\delta y_*$ in the
$x$ equation, we use \eqref{E:x_*0L} to write
\[
\delta y_*(t)=\ep J^{-1}\dot{\delta y}_*(t)-\ep J^{-1} g_y (t)\delta y_*(t).
\]
Integrating by parts, we obtain from (A5) and \eqref{eq3.9}
\[\begin{aligned}
&\big|\int_{t_0}^te^{(t-\tau)A}D_yf(x,y,\tau,\ep)\delta y_*d\tau\big|_X \\
\leq &C'\ep \Big(1 + \big|\int_{t_0}^t
\p_\tau (e^{(t-\tau)A} D_y f(x,y,\tau,\ep))J^{-1}\delta y_*d\tau\big|_{X} \Big)\leq C'\ep
\end{aligned}\]
and thus the estimate on $\delta x - \delta x_*$ follows from the variation of parameter formula, the estimate
on $h_1$, and the Gronwall inequality.
\end{proof}

\begin{rem}\label{rem3.4}
If $\delta y_*(t_0)=0$, it is clear that $|\delta x - \delta x_*|_{X_1}$ satisfies the estimate as $\delta y
-\delta y_*$. Without the assumption $|\delta y_*(t_0)|_{Y_1} \le C_1 \ep$, we can not obtain the estimate on
$|\delta x-\delta x_0|_{X_1}$ since in the last step of integration by parts, there is a term
\[
\int_{t_0}^tAe^{(t-\tau)A}D_yf(x,y,\tau,\ep)J^{-1}\delta
y_0d\tau,
\]
which is only in $X$ under current assumptions.
\end{rem}

\begin{proof} {\it of Theorem \ref{thm3.2}}
Let $(\delta x_*(t), \delta y_*(t))$ be the solution of \eqref{E:x_*0L} with initial value $(\delta x (t_0),
\delta y(t_0))$. The estimates on $\delta x_* - \delta x_0$ and $\delta y_* - \delta y_0$ follow from the standard
Gronwall inequality, which along with Lemma \ref{L:FTL} implies Theorem \ref{thm3.2}.
\end{proof}

\begin{rem}\label{rem3.6}
Following from the same proof, if we assume instead
\eqref{eq3.8} by
\[\begin{aligned}
&|\delta x_0(t_0)|_{X_1}+|\delta y_0(t_0)|_{Y_1}\leq 1 \qquad |\delta x(t_0)-\delta
x_0(t_0)|_{X}+|\delta y(t_0)-\delta y_0(t_0)|_{Y}\leq C_1\ep.
\end{aligned}\]
Then there exists $C'$ such that for all $t\in[t_0,t_0+T]$,
\[
|\delta x(t)-\delta x_0(t)|_{X}+|\delta y(t)-\delta y_0(t)|_{Y}\leq C'\ep.
\]
\end{rem}

\section{Invariant Manifold} \label{S:InMa}

In this section, we study the local integral manifold (as the system may be non-autonomous)
of a stationary solution of \eqref{eq3.1}, namely, the center-unstable (stable)
manifold, unstable (stable) manifold and etc in the framework of the Lyapunov-Perron integral equation.
The main point is to obtain these manifolds of size $O(1)$ and their leading order approximations.
Hypotheses (A1)-(A4) will be assumed and (A5) will be
needed in some theorems with from Theorem \ref{thm4.14} as specified.

\subsection{Preliminary}

In addition to (A1)-(A4) we assume
\begin{enumerate}
\item[(B1)] $\partial_tf(x,y,t,0)=\partial_tg(x,y,t,0)=0$,
\item[(B2)] $f(0,0,t,\ep)=g(0,0,t,\ep)=0$,
\item[(B3)] When $k=1$, assume $(Df,Dg)$ are equicontinuous functions in $x,y$ and
$\ep$ with respect to $t$ at $x=0,y=0,\ep=0$, i.e. for any $s>0$,
there exists $\delta>0$ such that if
$|x|_{X_1}<\delta,|y|_{Y_1}<\delta,|\ep|<\delta$, for any
$t\in\mathbb{R}$,
\[\begin{aligned}
&\big|Df(x,y,t,\ep)-Df(0,0,t,0)\big|_{L(X_1\times Y_1,X_1),L(X\times
Y,X)}<s,\\
&\big|Dg(x,y,t,\ep)-Dg(0,0,t,0)\big|_{L(X_1\times Y_1,Y_1),L(X\times
Y,Y)}<s.
\end{aligned}\]
\end{enumerate}
We will write
\[
f_{x, y} \triangleq D_{x,y}f(0,0,t,0), \qquad g_{x, y} \triangleq D_{x,y} g(0,0,t,0)
\]
which are independent of $t$.

For $(x,y)\in X_1\times Y_1$, let
\begin{equation}\label{eq4.1}
\begin{aligned}
F_1(x,y,t,\ep)=f-f_xx-f_yy\qquad G_1(x,y,t,\ep)=g-g_xx-g_yy
\end{aligned}
\end{equation}
and $\lambda(r)\in C_c^{\infty}(\mathbb{R})$ such that
\[
\lambda(r)=\left\{\begin{aligned}   &1,\hspace{3mm}r<\frac{1}{3}\\
&0, \hspace{3mm}r>1 \end{aligned} \right.\hspace{2mm}, \hspace{11mm}
|\lambda^\prime(r)|\leq 3.
\]
To take the advantage of the linear approximation, we cut off the nonlinearity
\begin{equation}\label{eq4.2}
\begin{aligned}
&F(x,y,t,\ep)=\lambda(\frac{|x|_{X_1}+|y|_{Y_1}}{r})F_1 \qquad
G(x,y,t,\ep)=\lambda(\frac{|x|_{X_1}+|y|_{Y_1}}{r})G_1,
\end{aligned}
\end{equation}
then by assumption (A3), (A5) and (B2), $F$ and $G$ satisfy:
\begin{equation}\label{eq4.3}
\begin{aligned}
&F(0,0,t,\ep)=G(0,0,t,\ep)=0 \qquad |F|_{X_1} + |G(x,y,t,\ep)|_{Y_1} \leq \ovr r,
\\
&|D (F, G)|_{L(X_1 \times Y_1)} + |D (F, G)|_{L(X \times Y)} \leq \ovr
\end{aligned}
\end{equation}
where $\ovr=\ovr(r,\ep) = C \widetilde{r}$ with $C$ depending only on $C_0$ and
\begin{equation}\label{eq4.4}
\begin{aligned}
&\widetilde{r}=\widetilde{r}(r,\ep_0) =\sup_{\scriptstyle|x|_{X_1}+|y|_{Y_1}\leq
r,\atop\scriptstyle\ep\in[0,\ep_0), \; t\in \R}
|D(F_1, G_1)|_{L(X_1 \times Y_1)} + |D(F_1, G_1)|_{L(X \times Y)}.
\end{aligned}
\end{equation}
Clearly, (A3) implies $\lim\limits_{r,\ep_0\rightarrow0}\ovr=0$. Let
\[
\af=A+f_x.
\]
Since \eqref{eq3.1} is non-autonomous, in order to construct the local integral manifolds at time $t_0$,
we translate the equation by $t_0$ and modify it by cutting off the nonlinearity
\begin{equation}\label{eq4.5}
\left\{\begin{aligned}
&\dot{x}(t)=\af x+f_y y + F(x,y,t+t_0,\ep)\\
&\dot{y}(t)=(\frac{J}{\ep}+g_y) y+ g_x x + G(x,y,t+t_0,\ep).
\end{aligned}\right.
\end{equation}
Clearly the system is unchanged in the $\frac r3$-neighborhood of $(0, 0)$ and we will
construct its global integral manifold which can be characterized by the exponential
decay as $t \to \pm\infty$ at a rate close to that of $e^{t\af}|_{X^{s, cu}}$. Naturally
we need the following weighted continuous function spaces. In general, given
$\eta\in\mathbb{R}$ and $Z$ a Banach space, let
\[
C_{\eta}^\pm(Z)=\Big\{z \in C^0(\R^\pm, Z) \big|\sup_{\pm t\geq 0}e^{-\eta
t}|z(t)|_Z< +\infty\Big\}
\]
with norm $|\cdot|_{\eta,\eps,Z}^\pm$, where $\eps$ is a parameter
\[
|z(\cdot)|_{\eta,\eps,Z}^\pm=\sup_{\pm t\geq 0}e^{-\eta
t}\frac{|z(t)|_Z}{\eps}.
\]
In order to hand the linear terms $f_y$ and $g_x$ in system \eqref{eq4.5} which are not
there in the usual study of local invariant manifold, we will introduce the following
spaces which allow us to average. Let
\begin{equation}\label{eq4.6}\begin{split}
B_{\eta}^\pm(\rho)=\Big\{\big(x,y\big)\in
&C_{\eta}^\pm (X_1)\times C_{\eta}^\pm(Y_1)\Big|\\
&\big|(x,y)\big|_{\eta,\eps}^\pm=
|x|_{\eta,1,X_1}^\pm+|y|_{\eta,\eps,Y_1}^\pm+|\dot{x}|_{\eta,1,X}^\pm<\rho\Big\},\end{split}
\end{equation}
where we also use $B_{\eta}^\pm(\infty)$ to denote the corresponding Banach space.

\subsection{Existence and smoothness of integral manifolds}

We start with the center-unstable and stable manifold. Assume the exponential
dichotomy of the linearized system at $(0,0)$
\begin{enumerate}
\item[(B4)] There exists a pair of continuous projections $(P_s,P_{cu})$ on $X$, such
that $P_s+P_{cu}=I_X$ \footnote[1]{Throughout this manuscript, we
will use notations $I_X$, $I_Y$ for identity maps on $X$, $Y$,
respectively. With slight abuse of notation, we also use the
projections $P_s$ and $P_{cu}$ to denote their composition with the
projection from $X\times Y$ to $X$.}, clearly $X=P_sX\oplus P_{cu}X$
and
\begin{eqnarray*}
X^{s,cu} \triangleq P_{s,cu}X\hspace{1.5mm}&&\mbox{are positively invariant under}\quad
e^{t\af},\\
e^{t\af}\hspace{1.5mm}&&\mbox{can be extended to a group on}\quad
X^{cu}.
\end{eqnarray*}
\item[(B5)] There exist constants $a_1<0$ and $a_1<a_2$, such that
\begin{equation*} \begin{aligned}
&& &|e^{t\af}P_{s}x|_{X}\leq Ke^{a_1t}|x|_{X}\quad&&\mbox{for}&&\quad
t\geq0,\quad x\in X,\\
&& &|e^{t\af}P_{cu}x|_{X} + |e^{t(\frac{J}{\ep}+g_y)}y|_{Y} \leq
Ke^{a_2t}(|x|_{X} + |y|_{Y}) \quad&&\mbox{for}&&\quad t\leq0,\quad x\in X,\; y \in Y
\end{aligned}\end{equation*}
\end{enumerate}

\begin{rem}\label{rem4.1}
Let $P_sX_1=X_1^s$ and $P_{cu}X_1=X_1^{cu}$, (B4) and (B5) imply
$e^{t\af}$ and $e^{t(\frac{J}{\ep}+g_y)}$ satisfy the same estimates
with all norms replaced by $|\cdot|_{X_1}$ and $|\cdot|_{Y_1}$.
\end{rem}

\begin{theorem}\label{thm4.3}
Assume hypotheses (A1)--(A4), (B1)--(B5), and there exists $\eta$
such that $\eta, k \eta \in (a_1, a_2)$ then
\begin{enumerate}
\item There exist $r>0$ and $\ep_0>0$ and a mapping $h_s: B_r (0, X_1^{cu}
\times Y_1) \times \R \times (0, \ep_0) \to B_r (0, X_1^s)$ such that the family of
graphs $\CM_\ep^{cu} (t_0)$ form a locally invariant center-unstable integral
manifold of $(0, 0)$ of \eqref{eq3.1}.
\item A backward flow is well-defined on the center-unstable manifold.
\item $h_s(\xi_{cu}, \xi_y, t_0, \ep)$ is $C^k$ in $\xi_{cu}$ and $\xi_y$
and continuous in $t_0$ with the norms independent of $\ep$.
\item If we assume hypothesis (A5') to be introduced before Proposition
\ref{thm4.14}, then
\[
\partial_{t_0}h_s(\cdot,\cdot,\ep) \in C^0(B_r (0, X_1^{cu} \times Y_1)
\times\mathbb{R},\, X^s)
\]
with the norm independent of $\ep$.
\end{enumerate}
\end{theorem}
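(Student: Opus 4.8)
The plan is to realize the family $\CM_\ep^{cu}(t_0)$ as the graph of a map extracted from the Lyapunov--Perron integral equation for backward-bounded solutions of the cut-off, $t_0$-translated system \eqref{eq4.5}, working in the weighted space $B_\eta^-(\infty)$ of \eqref{eq4.6}. The two non-classical features of that space -- the weight $\eps$ on the $Y_1$-component and the extra control on $\dot x$ in the weaker norm $|\cdot|_X$ -- are exactly what is needed to absorb the off-diagonal linear couplings $f_yy$ and $g_xx$, which are $O(1)$ and hence not part of the small nonlinearity $(F,G)$ from \eqref{eq4.2}.

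First I would fix parameters $(\xi_{cu},\xi_y)\in B_r(0,X_1^{cu})\times B_r(0,Y_1)$ and define on $B_\eta^-(\infty)$ the operator $\ST=\ST(\,\cdot\,;\xi_{cu},\xi_y,t_0,\ep)$ sending $(x,y)$ to $(\bar x,\bar y)$ with
\[
\bar x(t)=e^{t\af}P_{cu}\xi_{cu}+\int_0^t e^{(t-\tau)\af}P_{cu}(f_yy+F)\,d\tau+\int_{-\infty}^t e^{(t-\tau)\af}P_s(f_yy+F)\,d\tau,
\]
\[
\bar y(t)=e^{t(\frac J\ep+g_y)}\xi_y+\int_0^t e^{(t-\tau)(\frac J\ep+g_y)}(g_xx+G)\,d\tau,
\]
where $F,G$ are evaluated at $(x(\tau),y(\tau),\tau+t_0,\ep)$. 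Using (B4)--(B5), Remark \ref{rem4.1} and the weight condition $\eta,k\eta\in(a_1,a_2)$, the $(F,G)$-terms and the term $e^{t\af}P_{cu}\xi_{cu}$ contribute $O(\ovr)$ or $O(r)$; the coupling $f_yy$ contributes $O(\eps)$ thanks to the $\eps$-weight on $y$; and the coupling $g_xx$ is treated exactly as in the proof of Lemma \ref{L:FTL}: integrating $\int e^{(t-\tau)(\frac J\ep+g_y)}g_xx\,d\tau$ by parts converts it into $\ep$ times boundary terms plus an integral against $\p_\tau\big((J+\ep g_y)^{-1}g_xx\big)$, in which $\p_\tau x=\dot x$ is controlled by the $|\dot x|_{\eta,1,X}^-$ part of the $B_\eta^-$-norm and $g_x\in L(X,Y)$ by (A3), so this term is $O(\ep)\,|(x,y)|_{\eta,\eps}^-$. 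Hence for $r,\ep_0$ small $\ST$ maps a ball $B_\eta^-(\rho)$ into itself and contracts there, with all constants independent of $\ep$ and $t_0$. Its unique fixed point $(x,y)(\,\cdot\,;\xi_{cu},\xi_y,t_0,\ep)$ is Lipschitz in $(\xi_{cu},\xi_y)$ with small Lipschitz constant, and $h_s(\xi_{cu},\xi_y,t_0,\ep)\triangleq P_sx(0)$ defines the graph $\CM_\ep^{cu}(t_0)$. That these graphs form a locally invariant integral manifold, and that a backward flow is defined on it, follows from the usual characterization of the Lyapunov--Perron solution set as invariant under backward translation inside the $\tfrac r3$-ball where \eqref{eq4.5} agrees with \eqref{eq3.1}; this gives (1) and (2).

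For (3) I would invoke the fiber contraction theorem: differentiating the fixed-point equation $j$ times in $(\xi_{cu},\xi_y)$, the formal derivative $D^j(x,y)$ solves a linear integral equation of the same shape whose inhomogeneity is assembled from lower-order derivatives, now posed in a space with weight $j\eta$; since $a_1<0$, the assumption $k\eta\in(a_1,a_2)$ ensures $j\eta\in(a_1,a_2)$ for $1\le j\le k$, so each such equation is again a contraction, and the averaging device above is reapplied at every order to keep the bounds uniform in $\ep$. This yields $C^k$ dependence of $(x,y)$, hence of $h_s$, on $(\xi_{cu},\xi_y)$ with $\ep$-independent norms; continuity in $t_0$ comes from continuity of $(x,y,t)\mapsto F(x,y,t+t_0,\ep)$, using the equicontinuity hypothesis (B3) at the top order when $k=1$. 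For (4), assuming (A5'), I would differentiate the fixed-point equation in $t_0$: $(\p_{t_0}x,\p_{t_0}y)$ solves one more linear integral equation of the same type, with inhomogeneity involving $\p_tF,\p_tG$, which are well defined and suitably bounded by (A5) and (A5'); running the corresponding fiber contraction in the $X\times Y$-valued setting gives $\p_{t_0}h_s\in C^0\big(B_r(0,X_1^{cu}\times Y_1)\times\R,\,X^s\big)$ with $\ep$-independent norm. (Since $\p_tF(x,y,t,0)=0$ by (B1), the same computation measured in the weaker norm also yields the $O(\ep)$ bound recorded in Theorem \ref{T:main}.)

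The main obstacle is doing (i) and (ii) together: justifying the integration-by-parts averaging of the $g_xx$ coupling at the level of the unbounded generators $\af$ and $\tfrac J\ep+g_y$ and verifying that the resulting constants are genuinely uniform in $\ep$ while simultaneously closing the self-map and contraction on the non-standard space $B_\eta^-(\rho)$; and then propagating this averaging through every order of differentiation in the fiber contraction without reintroducing negative powers of $\ep$, which forces the choices of the weight $\eta$, the scaling $\eps$, and the radii $r,\ep_0$ to be coordinated carefully.
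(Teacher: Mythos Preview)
Your proposal is correct and follows the paper's approach closely: the Lyapunov--Perron operator $\mathscr{T}_{cu}$ on $B_\eta^-(\rho)$, integration-by-parts averaging of the $g_xx$ coupling via the $|\dot x|_{\eta,1,X}^-$ control (this is exactly Lemma~\ref{lemma4.2}), then $h_s=P_sx(0)$.

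Two refinements in the paper address precisely the obstacle you flag at the end. For (3), the paper observes that in estimating the remainder $z(\xi+\xi')-z(\xi)-D_\xi z(\xi)\xi'$ the $O(1)$ linear couplings $f_y,g_x$ cancel identically (being linear in $z$, they contribute equally to $\mathscr{T}_{cu}(z_1)-\mathscr{T}_{cu}(z_2)$ and to $D_z\mathscr{T}_{cu}(z_1-z_2)$), so no new averaging is needed at that step --- only the already-established contraction of $D_z\mathscr{T}_{cu}$ is used, and the standard argument of \cite{CL88,CLL91} applies verbatim. For (4), where you want to run the contraction in the weaker $X\times Y$-valued norm but then lose the $\dot x$ control needed to average $g_x\phi$, the paper introduces a variant $\wtt$ of $\mathscr{T}_{cu}$ (equation \eqref{eq4.41}) in which the integration by parts is built into the definition of the operator, replacing $\dot x$ by $A_fx+F+f_yy$; this makes $D\wtt$ a contraction simultaneously in $\|\cdot\|_\eta^1$ and $\|\cdot\|_\eta$ (see \eqref{eq4.43}) and yields $\partial_{t_0}z=(I-D\wtt)^{-1}\partial_{t_0}\wtt(z,t_0)$ cleanly without reintroducing negative powers of $\ep$.
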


Similar statements are given in Theorem \ref{thm4.5}, \ref{thm4.6},
and \ref{thm4.7} for stable, unstable, center-unstable, and center
manifolds. These invariant manifolds are constructed in small, of
$O(1)$ though, neighborhoods of $0$. Sometimes, we do want to track
the invariant manifolds in larger ranges. This can be achieved by
combining the local invariant theorems and Theorem \ref{thm3.1} and
\ref{thm3.2}. See Proposition \ref{thm4.20}.

As in the standard approach, we will work on the global center-unstable
manifold of \eqref{eq4.5} which yields the local center-unstable manifold
of \eqref{eq3.1}.

For any $\eta\in(a_1,a_2)$, there
exists $\underline{\ep}>0$ such that for any $\eps\in[0,\underline{\ep})$, there exist
$r_0,\ep_0>0$ satisfying that for any $\ep\in[0,\ep_0)$ and $r\in(0,r_0)$, it holds
\begin{equation}\label{eq4.7}
\sigma(\eta)=\min\{\sigma_1,\sigma_2(\eta),\sigma_3(\eta)\}>0,
\end{equation}
where
\begin{eqnarray*}
&\sigma_1=\frac{1}{2}-C_0^2\ep, \qquad \sigma_2(\eta)=1-3(\frac{K}{a_2-\eta}+\frac{K}{\eta-a_1}+1)\big(\ovr+C_0\eps
\big),\\
&\sigma_3(\eta)=1-\frac3{\eps} (\frac{K}{a_2-\eta}+K+1)\big(\ovr+2C_0^2\ep\big).
\end{eqnarray*}
In the rest of this whole section, we always assume $\eqref{eq4.7}$.

Given $t_0 \in \R$, to simplify notation, we write
\[
F(x,y,\tau+t_0,\ep) \triangleq F(x(\tau),y(\tau),\tau+t_0,\ep)=F(x,y,\tau+t_0,\ep) \qquad f_y y \triangleq f_yy(\tau)
\]
and such notation also applies to $G$ and $g$. Let
\[
U(t,\ep)=\begin{pmatrix}   e^{t\af} & 0\\
0 & e^{t(\frac{J}{\ep}+g_y)}
\end{pmatrix},
\]
and for any $(x,y)\in B_{\eta}^{-}(\rho)$ and $\xi=(\xi_{cu},\xi_y)\in X_1^{cu}\times Y_1$,
\begin{eqnarray*}\label{eq4.3}
\mathscr{T}_{cu}(x,y,\xi,t_0,\ep)(t)&=&U(t,\ep)\xi +\int_{0}^{t}U(t-\tau,\ep)\begin{pmatrix}
    P_{cu}\Big(F(x,y,\tau+t_0,\ep)+f_yy\Big)\\G(x,y,\tau+t_0,\ep)+g_xx
  \end{pmatrix}d\tau\\\nonumber
  &&+\int_{-\infty}^{t}U(t-\tau,\ep)\begin{pmatrix}
  P_{s}\Big(F(x,y,\tau+t_0,\ep)+f_yy\Big)\\0
  \end{pmatrix}d\tau.\nonumber
\end{eqnarray*}
It is standard to verify that $(x,y) \in B_{\eta}^{-}(\rho)$ is a fixed point of $\mathscr{T}$ if and
only if $(x(t), y(t)$ is a solution of \eqref{eq4.5} with $(I-P_s)(x, y)(0)=(\xi_{cu}, \xi_y)$. Thus
we focus on the fixed point equation
\begin{equation}\label{eq4.8}
(x, y)=\mathscr{T}_{cu}(x,y,\xi,t_0,\ep).
\end{equation}

\begin{lemma}\label{lemma4.2}
For any $\eta$ with $a_1<\eta<a_2$ and $\eps,r,\ep_0$ satisfy
\eqref{eq4.7}, there exists $\rho_0$ depending on
$|\xi_{cu}|_{X_1},|\xi_y|_{Y_1},K,\eps,\sigma$ ($\sigma$ defined in
\eqref{eq4.7}), such that for any
$\ep\in[0,\ep_0)$ and $\rho\in[\rho_0,\infty]$, $\mathscr{T}_{cu}$
defines a contraction mapping on $B_{\eta}^{-}(\rho)$ under the norm
$|\cdot|_{\eta,\eps}^{-}$.
\end{lemma}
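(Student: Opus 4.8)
The plan is to show that $\mathscr{T}_{cu}$ maps $B_\eta^-(\rho)$ into itself and is a contraction by estimating each of the three pieces of the integral operator in the weighted norm $|\cdot|_{\eta,\eps}^-$, the point being that the quantities $\sigma_1,\sigma_2(\eta),\sigma_3(\eta)$ in \eqref{eq4.7} are precisely the margins these estimates leave. First I would set up the bookkeeping: for $(x,y)\in B_\eta^-(\rho)$ write $a\triangleq|x|_{\eta,1,X_1}^-$, $b\triangleq|y|_{\eta,\eps,Y_1}^-$, $c\triangleq|\dot x|_{\eta,1,X}^-$, so $a+b+c<\rho$. Using (B4), (B5) and Remark \ref{rem4.1} (to pass to the $X_1,Y_1$ norms), the free term $U(t,\ep)\xi$ contributes at most $K(|\xi_{cu}|_{X_1}+|\xi_y|_{Y_1}/\eps)$ to the norm, since $\eta<a_2$ controls the backward growth on $X^{cu}\oplus Y$. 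For the inhomogeneous integrals, the standard weighted-convolution estimates give factors $\frac{K}{a_2-\eta}$ on the $[0,t]$ part and $\frac{K}{\eta-a_1}$ on the $(-\infty,t]$ part; the nonlinear terms $F,G$ are bounded by $\ovr$ in operator norm by \eqref{eq4.3}, while the new linear terms $f_yy$ and $g_xx$ are bounded using (A3): $|f_y|,|g_x|\le C_0$. The crucial observation here is the role of $\eps$: $f_y y$ acting on $y$ with weight-$\eps$ norm produces an $X_1$-contribution of size $C_0\eps\cdot b$, which is small, while $g_x x$ acting on $x$ produces a $Y_1$-contribution that must be divided by $\eps$ in the $y$-norm, giving $\frac{C_0}{\eps}a$ — but this is exactly where the averaging/integration-by-parts structure built into the $B_\eta^-$ space (the inclusion of $|\dot x|_{\eta,1,X}^-$ in the norm) is used: one integrates $\int_0^t U(t-\tau,\ep)g_x x\,d\tau$ by parts against $e^{(t-\tau)(J/\ep+g_y)}$, picking up a factor $\ep$ from $(J/\ep+g_y)^{-1}$, which produces $\frac{C_0^2\ep}{\eps}\cdot(\text{stuff})$ plus boundary terms and a term with $\dot x$, so the dangerous $\frac1\eps$ is killed by an $\ep$. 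This is what forces the shape of $\sigma_3(\eta)=1-\frac3\eps(\frac{K}{a_2-\eta}+K+1)(\ovr+2C_0^2\ep)$.

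Concretely, the steps I would carry out in order are: (i) record the weighted convolution inequalities $|\int_0^t U(t-\tau)P_{cu}\phi\,d\tau|_{\eta}\le\frac{K}{a_2-\eta}|\phi|_\eta$ and $|\int_{-\infty}^t U(t-\tau)P_s\phi\,d\tau|_\eta\le\frac{K}{\eta-a_1}|\phi|_\eta$, and likewise with the $Y$-component; (ii) estimate the $x$-component of $\mathscr{T}_{cu}(x,y,\xi)$ in $|\cdot|_{\eta,1,X_1}^-$ and its time-derivative in $|\cdot|_{\eta,1,X}^-$ — the latter directly from the differentiated Duhamel formula $\dot x=\af x+f_yy+F$, using $|\af x|_X\le|\dot x - f_yy-F|_X$ which is why $\sigma_1=\frac12-C_0^2\ep$ appears (the $\frac12$ is the self-improving constant from solving for $c$ in terms of $a,b$); (iii) estimate the $y$-component in $|\cdot|_{\eta,\eps,Y_1}^-$, performing the integration by parts on the $g_x x$ term as above; (iv) add the three estimates, observe the total is bounded by $K(|\xi_{cu}|_{X_1}+|\xi_y|_{Y_1}/\eps)+(1-\sigma(\eta))\rho$, and choose $\rho_0$ so that $K(|\xi_{cu}|_{X_1}+|\xi_y|_{Y_1}/\eps)\le\sigma(\eta)\rho_0$, i.e. $\rho_0 = \sigma(\eta)^{-1}K(|\xi_{cu}|_{X_1}+|\xi_y|_{Y_1}/\eps)$, which gives self-mapping for all $\rho\ge\rho_0$; (v) repeat (i)–(iv) for the difference $\mathscr{T}_{cu}(x,y,\xi)-\mathscr{T}_{cu}(x',y',\xi)$, where now there is no free term, so the same bounds yield a Lipschitz constant $\le 1-\sigma(\eta)<1$, and the integration-by-parts argument applies verbatim to $g_x(x-x')$ since $\dot x-\dot{x}'$ is controlled by the difference norm.

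The main obstacle is step (iii), the integration-by-parts estimate on $\int_0^t U(t-\tau,\ep)\big(G+g_xx\big)(\tau)\,d\tau$ in the $Y_1$-norm. The term $g_x x$ carries no small factor on its own, and naively its contribution to the $\eps$-weighted $Y_1$-norm is $O(\eps^{-1})$, which would destroy the contraction for small $\eps$; one must integrate by parts against the oscillatory evolution $e^{(t-\tau)(J/\ep+g_y)}$, using $(J+\ep g_y)^{-1}$ to extract a factor $\ep$, and then the boundary terms at $\tau=0$ and $\tau=t$ together with the term involving $\p_\tau(g_x x)=\dot{g}_x x+g_x\dot x$ must all be controlled — the $g_x\dot x$ piece is exactly why $|\dot x|_{\eta,1,X}^-$ was put into the norm defining $B_\eta^-$, and one needs (A3) to guarantee $g_x$ maps $X$ (not just $X_1$) into $Y_1$ after composition with $J^{-1}$, via the assumption $|J^{-1}|_{L(Y,Y_1)}\le C_0$. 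Getting all of these bounded by $\ep$-times the relevant norms, uniformly in $\ep\in[0,\ep_0)$, with constants that assemble into precisely $\sigma_3(\eta)>0$, is the technical heart of the lemma; everything else is routine weighted-norm Duhamel estimation.
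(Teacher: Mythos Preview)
Your approach is essentially the same as the paper's: weighted Duhamel estimates for the $x$-component in $X_1$ and for $\dot x$ in $X$, and the key integration by parts against $e^{(t-\tau)(J/\ep+g_y)}$ on the $g_x x$ term in the $y$-component, using $(J/\ep+g_y)^{-1}=\ep(J+\ep g_y)^{-1}$ to gain the factor $\ep$ that cancels the $1/\eps$ weight. The contraction step is handled identically.

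One correction: your reading of $\sigma_1=\tfrac12-C_0^2\ep$ is off. It is not a ``self-improving'' margin arising in step (ii). In the paper the $\dot x$-estimate is obtained directly from \eqref{eq4.9}, namely $\tfrac{d}{dt}[\mathscr{T}_{cu}(x,y)]_x=\af\,[\mathscr{T}_{cu}(x,y)]_x+F+f_yy$, and one simply uses that $|\af\cdot|_X$ is controlled by $|\cdot|_{X_1}$ (graph norm), which was already bounded; no bootstrapping in $c=|\dot x|_{\eta,1,X}^-$ is needed, and $\sigma_1$ plays no role there. The condition $\sigma_1>0$, i.e.\ $C_0^2\ep<\tfrac12$, is what guarantees that $(J+\ep g_y)^{-1}$ exists with $|(J+\ep g_y)^{-1}|\le 2|J^{-1}|\le 2C_0$, so that the integration by parts in your step (iii) is legitimate and the resulting terms (the two boundary terms and the $g_x\dot x$ integral) carry the factor $\ep|J^{-1}||g_x|\le C_0^2\ep$ you need. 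In short, $\sigma_1$ belongs to step (iii), not step (ii); once you relocate it, your outline matches the paper's proof line for line.
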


\begin{proof}
To keep the exposition clean, we will skip in most places the parameters
$\xi_{cu}$, $\xi_y$, $t_0$, and $\ep$, which are fixed in this lemma. It
is easy to obtain from the definition,
\begin{eqnarray*}
&&\sup_{t\leq 0}e^{-\eta t}\Big|(P_s+P_{cu})\mathscr{T}_{cu}(x,y)(t)
\Big|_{X_1} \\
\leq && K|\xi_{cu}|_{X_1}+\big(\frac{K}{a_2-\eta}+\frac{K}{\eta-a_1}\big)\big(|DF|_{C^0}+\eps|f_y|\big)
\big|(x,y)\big|_{\eta,\eps}^{-}.
\end{eqnarray*}
Since $(x,y)\in C_{\eta}^{-}(X_1)\times C_{\eta}^{-}(Y_1)$ and $\af$
is a closed operator, one can verify $(P_s+P_{cu})\mathscr{T}_{cu}(x,y)
\in C^1\big((-\infty,0),X\big)$ and
\begin{equation}\label{eq4.9}
\begin{aligned}
&(P_s+P_{cu})\frac{d}{dt}\mathscr{T}_{cu}(x,y)(t)\\
=&\af (P_s+P_{cu}) \mathscr{T}_{cu}(x,y)(t) + F(x(t),y(t),t+t_0,\ep)+f_yy(t).
\end{aligned}
\end{equation}
Therefore,
\begin{eqnarray*}
&&\sup_{t\leq 0}e^{-\eta
t}\Big|(P_s+P_{cu})\frac{d}{dt}\mathscr{T}_{cu}(x,y)(t)
\Big|_{X}\\
&\leq&
K|\xi_{cu}|_{X_1}+\big(\frac{K}{a_2-\eta}+\frac{K}{\eta-a_1}+1\big)\big(|DF|_{C^0}+\eps|f_y|\big)
\big|(x,y)\big|_{\eta,\eps}^{-}.
\end{eqnarray*}
Again, by the definition of $\mathscr{T}_{cu}$ and integration by parts
\begin{eqnarray*}
&&(I-P_s-P_{cu})\mathscr{T}_{cu}(x,y)(t)
\\
&=&e^{t(\frac{J}{\ep}+g_y)}\xi_y+\int_{0}^{t}e^{(t-\tau)(\frac{J}{\ep}+g_y)}G(x,y,\tau+t_0,\ep)d\tau \\
&&-\ep e^{(t-\tau)(J+\ep g_y)}(\frac{J}{\ep}+g_y)^{-1}g_xx\Big|_{0}^{t}
+\ep \int_{0}^{t}e^{(t-\tau)(J+ \ep g_y)}(\frac{J}{\ep}+g_y)^{-1}g_x\dot{x}d\tau.
\end{eqnarray*}
Consequently, we obtain
\begin{eqnarray*}
&&\sup_{t\leq 0}\frac{1}{\eps}e^{-\eta
t}\Big|(I-P_s-P_{cu})\mathscr{T}_{cu}(x,y)(t)
\Big|_{Y_1}\\
&\leq&
\frac{K|\xi_y|_{Y_1}}{\eps}+(\frac{K}{a_2-\eta}+K+1)\frac{|DG|_{C^0}+2\ep|J^{-1}||g_x|}{\eps}
\big|(x,y)\big|_{\eta,\eps}^{-}.
\end{eqnarray*}
Using \eqref{eq4.7}, clearly, there exists $\rho_0>0$ determined by
$|\xi_y|_{Y_1},|\xi_{cu}|_{X_1},K,\eps$ and $\sigma$ such that for
any $\rho\in(\rho_0,+\infty]$, the above inequalities imply
that $\mathscr{T}_{cu}$ maps $B_{\eta}^{-}(\rho)$ to
$B_{\eta}^{-}(\rho)$.

To prove it is a contraction, we can estimate
in a similar fashion
\begin{equation}\label{eq4.10}
\begin{aligned}
&\sup_{t\leq 0}e^{-\eta
t}\Big|(P_s+P_{cu})\big(\mathscr{T}_{cu}(x_1,y_1)-
\mathscr{T}_{cu}(x_2,y_2)\big)(t)\Big|_{X_1}\\
\leq&(\frac{K}{a_2-\eta}+\frac{K}{\eta-a_1})(|DF|_{C^0}+\eps|f_y|)
(|x_1-x_2|_{\eta,1,X_1}^{-}+|y_1-y_2|_{\eta,\eps,Y_1}^{-}),\\
&\sup_{t\leq 0}e^{-\eta
t}\Big|(P_s+P_{cu})\frac{d}{dt}\big(\mathscr{T}_{cu}(x_1,y_1)-
\mathscr{T}_{cu}(x_2,y_2)\big)(t)\Big|_{X}\\
\leq&\big(\frac{K}{a_2-\eta}+\frac{K}{\eta-a_1}+1\big)\big(|DF|_{C^0}+\eps|f_y|\big)
\big(|x_1-x_2|_{\eta,1,X_1}^{-}+|y_1-y_2|_{\eta,\eps,Y_1}^{-}\big),\\
&\sup_{t\leq 0}e^{-\eta
t}\Big|(I-P_s-P_{cu})\big(\mathscr{T}_{cu}(x_1,y_1)-
\mathscr{T}_{cu}(x_2,y_2)\big)(t)\Big|_{Y_1}\\
\leq&\big(\frac{K}{a_2-\eta}+K+1\big)\frac{|DG|_{C^0}+2\ep|J^{-1}||g_x|}{\eps}
\big|(x_2 -x_1, y_2 -y_1)|_{\eta, \eps}^-
\end{aligned}
\end{equation}
Therefore, $\mathscr{T}_{cu}$ defines a contraction mapping on
$B_{\eta}^{-}(\rho)$ under the norm $|\cdot|_{\eta,\eps}^{-}$.
\end{proof}

Many other proofs in this manuscript will be very much in the fashion of that of the
above lemma in the sense that integration by parts often provides an effective way to
provide an extra $\ep$ in the estimate. We will skip some details.

For any $(\xi,t_0,\ep)\in X^{cu}_1\times Y_1\times\mathbb{R}^2$, let
$\big(x(t),y(t)\big)$ be the fixed point of $\mathscr{T}_{cu}$, and
\begin{equation}\label{eq4.11}
h_{s}(\xi,t_0,\ep)=P_s x(0)=\int_{-\infty}^{0}
e^{-\tau\af}P_s\Big(F\left(x,y,\tau+t_0,\ep\right)+f_yy\Big) d\tau,
\end{equation}
\[
\mathcal{M}_{\ep}^{cu} (t_0)=\big\{\xi+h_{s}(\xi,t_0,\ep)\big|
\xi\in X_1^{cu}\times Y_1\big\}.
\]
From the standard argument based on the uniqueness of the contraction, one can show that $h_s$ and
the center-unstable manifold $\mathcal{M}_{\ep}^{cu} (t_0)$, for any $t_0\in \R$ are independent of
$\eta\in(a_1,a_2)$. Moreover, the flow map of \eqref{eq4.5} starting on $\mathcal{M}_{\ep}^{cu} (t_0)$ is
well-defined both forwardly and backwardly in $t$ and, from $t_1$ to $t_2$, it maps $\mathcal{M}_\ep^{cu}
(t_0 + t_1)$ to $\mathcal{M}_\ep^{cu}(t_0 + t_2)$.

\begin{rem}\label{rem4.4}
Note since $X_1^{cu}\times Y_1\neq T_0\mathcal{M}_\ep^{cu}$, we
cannot prove $h_s$ is bounded. See the Appendix. Also, as usual, $\mathcal{M}_{\ep}^{cu} (t_0)$
depends on the cut-off and thus is not unique for \eqref{eq3.1}.
\end{rem}

\noindent {\bf Smoothness in $\xi_{cu}$ and $\xi_y$.} Let $z(\xi)(t)
= (x(t), y(t))$ denote the fixed point of $\ST (\xi)$. From \eqref{eq4.8},
formally,
\begin{equation} \label{E:Dz}
D_\xi z(\xi) =(\phi, \psi) = \big(I - D_z \ST_{cu} (z, t_0, \ep)\big)^{-1} U
\end{equation}
where
\begin{eqnarray*}
D_z \mathscr{T}_{cu} (z, t_0)(\phi,\psi) (t)
&=&\int_{0}^{t}U(t-\tau)\begin{pmatrix}
    P_{cu}\big(DF(z, \tau + t_0)(\phi,\psi)
    +f_y\psi\big)
    \\DG(z, \tau + t_0, \ep)(\phi,\psi)+g_x\phi
  \end{pmatrix}d\tau\\
  &&+\int_{-\infty}^{t}U(t-\tau)\begin{pmatrix}
  P_{s}\big(DF(z, \tau +t_0)(\phi,\psi)+f_y\psi\big)\\ 0
  \end{pmatrix}d\tau
\end{eqnarray*}
with the parameter $\ep$ skipped. It is easy to verify that $\big(I -
D_z \ST_{cu} (z, t_0, \ep)\big)^{-1}$ and thus the right side of
\eqref{E:Dz} are well-defined with respect to the same exponential rate
$\eta$. In order to estimate $z(\xi +\xi') - z(\xi)- D_\xi z (\xi) \xi'$,
consider
\begin{eqnarray*}
& &\big(I - D_z \ST_{cu} (z, t_0, \ep)\big) \big(z(\xi +\xi') - z(\xi)
- D_\xi z (\xi) \xi'\big)\\
& =& z(\xi +\xi') - z(\xi) - D_z \ST_{cu} (z, t_0, \ep) \big(z(\xi
+\xi') - z(\xi)\big) - U \xi'.
\end{eqnarray*}
In the above right side the linear terms $f_y$ and $g_x$, which do not vanish at
$(0, 0)$, actually disappear. Moreover, the evolution operator has a uniform bound
though it depends on $\ep$. Therefore the exactly standard argument \cite{CL88, CLL91}
(where no differentiation in $t$ is need which would product $\frac 1\ep$) implies
the $C^1$ smoothness of $z(\cdot, t_0, \ep) : X_1^{cu} \times Y_1 \to B_{\eta'}^-$
for any $\eta' < \eta$. This establish the $C^1$ smoothness of $\CM_\ep^{cu}(t_0)$ in
$\xi_{cu}$ and $\xi_y$. Similarly, the $C^k$ smoothness of $\CM_\ep^{cu}$ can be
obtained if we assume that there exists $\eta$ such that $a_1 < \eta, \, k\eta < a_2$.

\noindent {\bf Dependence on $t_0$.} In order to smoothness of
$\mathcal{M}_{\ep}^{cu}$ in $t_0$, in addition to hypotheses (A1)-(A4) for $k=1$
and (B1)-(B3), we assume and
\begin{enumerate}
\item[(A5$'$)] $(\partial_tf,\partial_tg)\in C^0(X_1\times
Y_1\times\mathbb{R}^2,X\times Y)$, $(D\partial_tf,D\partial_tg)\in
C^0(X_1\times Y_1\times\mathbb{R}^2,L(X_1\times Y_1,X\times Y))$,
$(D\partial_{\ep}\partial_tf,D\partial_{\ep}\partial_tg)\in
C^0(X_1\times Y_1\times\mathbb{R}^2,L(X_1\times Y_1,X\times Y))$
Moreover, their norms are bounded by $C_0$.
\end{enumerate}

\begin{rem}\label{rem4.13}
In fact, the assumptions in (A5$'$) on $D\partial_t (f, g)$ and $D\p_\ep
\partial_t(f, g)$ are only needed when one has to work with $\eta>0$. In fact, if
$\eta<0$, our next theorem still holds if we only assume $\p_t(f, g)
\in X \times Y$ $\p_t \p_\ep (f, g) \in X \times Y$.
\end{rem}

To avoid dealing with $\dot x(t)$ as in the above contraction argument, which may introduce a factor of
$\frac 1\ep$, we introduce a slight variation of $\ST_{cu}$ which will be somewhat more easily used in the
proofs of the following proposition as no time derivative is directly involved. Given parameters $\xi \in
X_1^{cu} \times Y_1$, $t_0$, $\ep$, for $z=(x,y) \in C_{\eta}^{-}(X_1) \times C_{\eta}^{-}(Y_1)$, let
\begin{equation}\label{eq4.41}
\wtt(z,t_0)(t) = U(t,\ep)\xi + (\phi(t), \psi(t))
\end{equation}
where
\begin{align*}
\phi(t) = \int_{0}^{t} e^{(t-\tau)\af} P_{cu}\big(&F(z,\tau+t_0,\ep)+f_yy\big) d\tau \\
&+ \int_{-\infty}^{t} e^{(t-\tau)\af} P_s \big(F(z,\tau+t_0,\ep)+f_yy\big) d\tau
\end{align*}
and
\begin{align*}
\psi(t) = & \int_{0}^{t} e^{(t-\tau) (\frac J\ep +g_y)} G(z,\tau+t_0,\ep) d\tau
-\ep (J+\ep g_y)^{-1} \big (g_x x(t) + e^{t(\frac{J}{\ep}+g_y)}g_xx(0) \big)\\
& + \ep \int_0^t(J +\ep g_y)^{-1}e^{(t-\tau)(\frac{J}{\ep}+g_y)}g_x(\af
x+F(z,\tau+t_0,\ep)+f_yy)d\tau.
\end{align*}
We equip $C_{\eta}^{-}(X_1)\times C_{\eta}^{-}(Y_1)$ with the norm
\begin{equation}\label{eq4.42}
\|z\|_{\eta}^1=\sup_{t\leq0}e^{-\eta
t}\big(|x(t)|_{X_1}+\frac{|y(t)|_{Y_1}}{\eps}\big),
\end{equation}
and we will also use $\|\cdot\|_{\eta}$ to denote
\[
\|z\|_{\eta}=\sup_{t\leq0}e^{-\eta
t}\big(|x(t)|_{X}+\frac{|y(t)|_{Y}}{\eps}\big).
\]
We denote the balls in these norms by
\begin{equation} \label{E:balls}
\widetilde{B}_{\eta'}^{-}(\infty)=\Big\{z\big|\|z\|_{\eta'}^1<\infty\Big\} \qquad
\overline{B}_{\eta}^{-}(\infty)=\Big\{z\big|\|z\|_{\eta}<\infty\Big\}
\end{equation}
Obviously, $\wtt$ is come up with from $\ST$ after integrating by parts in the $y$
component. Using (A3), it is straight forward to prove that $\wtt$ is still a
contraction on $C_{\eta}^{-}(X_1)\times C_{\eta}^{-}(Y_1)$ under the norm in \eqref{eq4.42}. Moreover, its
linearization is also a contraction under both of the norms $\|\cdot\|_\eta^1$ and $\|\cdot\|_\eta$. Namely, for
some $0<\sigma'<1$ with a similar form as $\sigma$ defined in \eqref{eq4.7}, we have
\begin{equation}\label{eq4.43}
\left\|\wtt(z,t_0)-\wtt(z',t_0)\right\|_{\eta}\leq(1-\sigma')\|z-z'\|_{\eta} \qquad \|D \wtt z\|_\eta \leq
(1-\sigma')\|z\|_{\eta}.
\end{equation}
By the uniqueness, $\wtt$ and $\mathscr{T}_{cu}$ have the same fixed point.

\begin{rem}\label{rem4.15}
In the following, we will repeatedly use the fact that $z(\xi)$, the fixed
point of $\wtt$ and $\ST_{cu}$, belongs to $\wt{B}_{\eta}^{-}(\infty)$ for
any $\eta\in(a_1,a_2)$ as long as \eqref{eq4.7} is satisfied.
\end{rem}

Since the time derivatives involves unbounded operators $A$ and $J$, we do not
expect $\CM_{cu}^\ep(t_0)$ to be smooth in $t_0$ in $X_1 \times Y_1$.

\begin{prop}\label{thm4.14}
Assume the conditions in Theorem \ref{thm4.3} for $k=1$, then
\[
h_s(\cdot,\cdot,\ep)\in C^0(X_1^{cu}\times Y_1\times\mathbb{R},X_1^s).
\]
If we further assume (A5$'$),
\[
\partial_{t_0}h_s(\cdot,\cdot,\ep)\in C^0(X_1^{cu}\times Y_1\times
\mathbb{R},X^s).
\]
\end{prop}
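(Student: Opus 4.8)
The plan is to establish the continuity of $h_s$ and of $\partial_{t_0} h_s$ by differentiating the fixed-point characterization with respect to the parameter $t_0$, exactly as one does for $\xi_{cu}$ and $\xi_y$, but using the integrated operator $\wtt$ (rather than $\ST_{cu}$) so that no genuine time derivative of $z$ appears and hence no factor $\frac1\ep$ is introduced. First I would record that $z(\xi,t_0,\ep)$, the common fixed point of $\ST_{cu}$ and $\wtt$, depends continuously on all its parameters: the continuity in $\xi$ is already established, and continuity in $t_0$ follows from the uniform contraction estimate \eqref{eq4.43} together with hypothesis (B3) (equicontinuity of $Df,Dg$ in $t$) and (A3), which give that $t_0\mapsto\wtt(z,t_0)$ is continuous uniformly on bounded sets of $z$ in the norm $\|\cdot\|_\eta^1$; the contraction mapping theorem with continuous dependence on parameters then yields $z(\cdot,\cdot,\ep)\in C^0$ into $\wt B_\eta^-(\infty)$. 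Evaluating $P_s x(0)$ via \eqref{eq4.11} and using that the integral in \eqref{eq4.11} converges in $X_1^s$ (Remark \ref{rem4.1}) gives the first assertion, $h_s(\cdot,\cdot,\ep)\in C^0(X_1^{cu}\times Y_1\times\R,X_1^s)$, with the bound independent of $\ep$ coming from $\sigma(\eta)$ in \eqref{eq4.7}.

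For the second assertion I would differentiate the fixed-point equation $z=\wtt(z,t_0)$ formally in $t_0$ to get, writing $w=\partial_{t_0}z$,
\[
w = D_z\wtt(z,t_0)w + \partial_{t_0}\wtt(z,t_0),
\]
so that $w=\big(I-D_z\wtt(z,t_0)\big)^{-1}\partial_{t_0}\wtt(z,t_0)$, where $I-D_z\wtt$ is boundedly invertible on $\overline{B}_\eta^-(\infty)$ by the second estimate in \eqref{eq4.43}. The key point is that $\partial_{t_0}\wtt(z,t_0)$ involves only $\partial_t f$, $\partial_t g$ and, after the integration by parts that defines $\psi$, the terms $\partial_t(D_x g)$ and $\partial_t(D_y g)$ — precisely the quantities controlled by hypothesis (A5$'$) — but never $\dot x$ or $A x$ paired with an inverse power of $\ep$; the factor $\ep$ produced by $(J+\ep g_y)^{-1}$ in the definition of $\wtt$ exactly absorbs the potential singularity, mirroring the mechanism already used in Lemmas \ref{L:FT} and \ref{L:FTL}. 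One should note that because $\partial_{t_0}\wtt$ only lands in the weaker space $X\times Y$ (the $\partial_t f$ term is merely in $X$), the natural space for $w$ is $\overline{B}_\eta^-(\infty)$, which is why the conclusion is $\partial_{t_0}h_s\in C^0(\cdot,X^s)$ and not $X_1^s$. The rigorous justification that $w$ is the actual derivative (not just the solution of the formal equation) follows the same difference-quotient argument as in the $\xi$-smoothness part: one estimates $z(\xi,t_0+s)-z(\xi,t_0)-s\,w$ by applying $I-D_z\wtt$ to it and using that the residual is $o(s)$ in $\|\cdot\|_\eta$, which in turn uses the uniform continuity of $D_z\wtt$ and $\partial_{t_0}\wtt$ in their arguments, guaranteed by (B3) and (A5$'$).

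Continuity of $\partial_{t_0}h_s$ in $(\xi_{cu},\xi_y,t_0)$ then follows by writing it, via \eqref{eq4.11}, as $\partial_{t_0}h_s = P_s\big(w_x(0)\big) + \int_{-\infty}^0 e^{-\tau\af}P_s\big(\partial_t F + D F\cdot w + \partial_t(f_y y) + \dots\big)d\tau$ evaluated at $z(\xi,t_0,\ep)$, and invoking the joint continuity of $z$ and of $w=\big(I-D_z\wtt\big)^{-1}\partial_{t_0}\wtt$ in all parameters — again the inverse operator varies continuously because $D_z\wtt$ does and its norm is uniformly $<1$. The $\ep$-uniformity of the bound is automatic since every constant that enters is controlled by $K$, $a_1$, $a_2$, $\ovr$, $\eps$ and $C_0$ through $\sigma(\eta)$.

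The main obstacle I anticipate is bookkeeping the function spaces carefully enough to see that $\partial_{t_0}\wtt(z,t_0)$ genuinely lands in $\overline{B}_\eta^-(\infty)$ with an $\ep$-independent bound: one has to check that after the integration by parts in the $y$-component, the worst term — something like $\ep\int_0^t (J+\ep g_y)^{-1} e^{(t-\tau)(\frac J\ep+g_y)} \partial_t(g_x)(\af x + F + f_y y)\,d\tau$ — is estimated using $D_x\partial_t g\in L(X_1,Y)$ (from (A5$'$)) together with $\af x\in C^0_\eta(X)$ (which holds because $x\in C^1_\eta(X)$, a fact already part of membership in $B_\eta^-$), and that no residual $\frac1\ep$ survives. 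Everything else is a routine adaptation of the standard Lyapunov--Perron parameter-smoothness argument of \cite{CL88, CLL91}, already invoked for the $\xi$-variables earlier in this section.
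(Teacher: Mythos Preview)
Your proposal is correct and follows essentially the same route as the paper, which also derives $\partial_{t_0}z=(I-D_z\wtt)^{-1}\partial_{t_0}\wtt$ in $\overline B_\eta^-(\infty)$ (equation \eqref{eq4.57}) via $\wtt$ for exactly the reason you identify. One detail you gloss over: showing $t_0\mapsto\wtt(z,t_0)$ is continuous in $\|\cdot\|_\eta^1$ requires controlling the tail at $-\infty$ of the defining integrals, and the paper does this not through (B3) (which concerns equicontinuity in $(x,y,\ep)$ uniformly in $t$, not continuity in $t$) but by taking $z\in\wt B_{\eta'}^-(\infty)$ for some $\eta'>\eta$ (legitimate by Remark \ref{rem4.15}) and splitting the integral at $T_2=\tfrac{\log s}{\eta'-\eta}$ --- this two-exponent device is what makes that step go through; also, your ``worst term'' involving $\partial_t(g_x)$ actually vanishes since $g_x$ is $t$-independent by (B1), so the genuine $t_0$-derivatives enter only through $\partial_tF$ and $\partial_tG$.
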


\begin{rem} \label{R:p_th_s1}
If in (A5) and (A5') we assume the smoothness of $(f, g) : X_1 \times Y_1 \times \R^2 \to X_1 \times Y_1$, the same
proof implies that $\p_{t_0} h_s \in X_1^s$ is continuous. See also Remark \ref{R:p_th_s2}.
\end{rem}

\begin{proof}
To simplify notations, we will also ignore the $\ep$ variable in $F,G$.
We first claim
\begin{equation}\label{eq4.44}
\lim_{t_1\rightarrow
t_0} \left\|\wt{\mathscr{T}}_{cu}(z,t_1)-\wt{\mathscr{T}}_{cu}(z,t_0)\right\|_{\eta}^{1}=0,
\end{equation}
for any $a_1<\eta<\eta'<a_2$ and $z\in \wt{B}_{\eta'}^{-}(\infty)$,
where
$\widetilde{B}_{\eta'}^{-}(\infty)=\Big\{z\big|\|z\|_{\eta'}^1<\infty\Big\}$.
In fact, for any $s>0$, we will show the above quantity is bounded by some $Cs$ when
$|t_1 - t_0|<<1$. Let $T_2 = \displaystyle\frac{\log{s}}{\eta'-\eta}$. By (A3),
$DF(pz(t),t'),DG(pz(t),t')$ are uniformly continuous on
$(p,t,t')\in[0,1]\times[T_2,0]\times[T_2+t_0-1,t_0+1]$. Therefore,
there exists $s'>0$ such that if $|t_1-t_0|<s'$,
\[\begin{aligned}
&\big|D (F, G) (pz(t),t+t_1)-D(F, G)(pz(t),t+t_0)\big|_{L(X_1\times
Y_1,X_1 \times Y_1)}<s
\end{aligned}\]
for $(p,t)\in[0,1]\times[T_2,0]$. Rewrite $F(z(t),t+t_1)
-F(z(t),t+t_0)$ as
\begin{equation}\label{eq4.45}
\begin{aligned}
&F(z(t),t+t_1)-F(z(t),t+t_0)\\
=&F(z(t),t+t_1)-F(0,t+t_1)-F(z(t),t+t_0)+F(0,t+t_0)\\
=&\left(\int_0^1DF(pz(t),t+t_1)-DF(pz(t),t+t_0)dp\right)z(t).
\end{aligned}
\end{equation}
It follows that for
$|t_1-t_0|<s'$,
\begin{equation}\label{eq4.46}
\begin{aligned}
&\big|F(z(t),t+t_1)-F(z(t),t+t_0)\big|_{X_1}\leq s|z(t)|_{X_1\times Y_1},
\end{aligned}
\end{equation}
which is also true for $G$. To obtain \eqref{eq4.44}, we split the
integration intervals in the definition of $\wtt$ into $t<T_2$
and $t>T_2$. On the former, the estimate can be obtained by using
\eqref{eq4.46} and the exponential bound of $z$ and on the latter we
only need to notice
\[
|(F, G) (z(\tau))|_{X_1} \le C \bar r |z(\tau)|_{X_1 \times Y_1} \le C\bar r
e^{\eta \tau} < C\bar r s e^{\eta' \tau}, \qquad \text{ for } \tau >T_2.
\]
An immediate consequence of \eqref{eq4.44} is
\begin{equation} \label{eq4.47}
\wtt\in C^0(\widetilde{B}_{\eta'}^{-}(\infty)\times\mathbb{R},
\widetilde{B}_{\eta}^{-}(\infty)).
\end{equation}
In fact, by the same procedure we can also prove the following stronger statements,
\begin{eqnarray}\label{eq4.48}
D\wtt&\in&C^0(\widetilde{B}_{\eta'}^{-}(\infty)\times\mathbb{R},
L(\widetilde{B}_{\eta'}^{-}(\infty),\widetilde{B}_{\eta}^{-}(\infty)),
\end{eqnarray}
where $a_1<\eta<\eta'<a_2$ and $D$ is the differentiation with
respect to $z$.

Let $z_i$ be the fixed point of $\wt{\mathscr{T}}_{cu}(\cdot,t_i)$
for $i=0,1$. From \eqref{eq4.43} it is easy to see
\begin{eqnarray*}
\|z_1-z_0\|_{\eta}^{1}&\leq&(1-\sigma')\|z_1-z_0\|_{\eta}^{1}+\Big\|
\wt{\mathscr{T}}_{cu}(z_0,t_1)-\wt{\mathscr{T}}_{cu}(z_0,t_0)\Big\|_{\eta}^{1}.
\end{eqnarray*}
Together with \eqref{eq4.44} and Remark \ref{rem4.15}, it implies
\begin{equation}\label{eq4.49}
\lim_{t_1\rightarrow t_0}\|z(t_1)-z(t_0)\|_\eta^1 \leq\frac{1}{\sigma'}
\lim_{t_1\rightarrow t_0}\Big\|\mathscr{T}_{cu}(z_0,t_1)-\mathscr{T}_{cu}(z_0,t_0)\Big\|_\eta^1=0,
\end{equation}
From the definition of $h_s$, we obtain its continuity in $t_0$.

To prove the second part, by our assumptions in (A4) and (A5$'$)
involving $X$ and $Y$, one may also prove
\begin{eqnarray}\label{eq4.50}
D\wtt&\in&C^0(\widetilde{B}_{\eta'}^{-}(\infty)\times\mathbb{R},
L(\overline{B}_{\eta'}^{-}(\infty),\overline{B}_{\eta}^{-}(\infty)))\\
\label{eq4.51}
\partial_{t_0}\wtt&\in&C^0(\widetilde{B}_{\eta'}^{-}(\infty)\times\mathbb{R},
\overline{B}_{\eta}^{-}(\infty)),
\end{eqnarray}
where
$\overline{B}_{\eta}^{-}(\infty)=\Big\{z\big|\|z\|_{\eta}<\infty\Big\}$.

Since (B2) implies $F(0,t)=G(0,t)=0$, we write
\begin{equation}\label{eq4.52}
\begin{aligned}
&F(z_0,t+t_1)-F(z_0,t+t_0)-\partial_{t_0}F(z_0,t+t_0)(t_1-t_0)\\
=&(t_1-t_0)\Big(\int_0^1\int_0^1D\partial_{t_0}F(qz_0,t+pt_1+(1-p)t_0)\\
&-D\partial_{t}F(qz_0,t+t_0)dqdp\Big)\big(z_0(t)\big).
\end{aligned}
\end{equation}
Assumptions (A5$'$), \eqref{eq4.52}, and a similar estimate for $G$ yield
$\|\partial_{t_0}\wtt(z)\|_{\eta}\leq C'\|z\|_{\eta}^1$, which implies,
for any $z \in \widetilde{B}_{\eta'}^{-}(\infty)$,
\begin{equation}\label{eq4.53}
\left\|\wtt(z,t_1)-\wtt(z,t_0)\right\|_{\eta}\leq
C'|t_1-t_0|\|z\|_{\eta}^1,
\end{equation} where $C'$ depends on $C,a_1,a_2,\eta$. Since
\begin{equation}\label{eq4.54}
z_1-z_0=\wtt(z_1,t_0)-\wtt(z_0,t_0)+\wtt(z_1,t_1)-\wtt(z_1,t_0),
\end{equation}using \eqref{eq4.50} and \eqref{eq4.53}, we obtain
\begin{equation}\label{eq4.55}
\|z_1-z_0\|_{\eta}\leq\frac{C'}{\sigma'}\|z_1\|_{\eta}^1|t_1-t_0|.
\end{equation}
We continue to write $z_1-z_0$ as
\begin{equation}\label{eq4.56}
\begin{aligned}
z_1-z_0
=\partial_{t_0}\wtt(z_0,t_0)(t_1-t_0)+D\wtt(z_0,t_0)(z_1-z_0)+R_1+R_2,
\end{aligned}
\end{equation}
where
\begin{eqnarray*}
R_1&=&\wtt(z_1,t_1)-\wtt(z_1,t_0)-\partial_{t_0}\wtt(z_0,t_0)(t_1-t_0),\\
R_2&=&\wtt(z_1,t_0)-\wtt(z_0,t_0)-D\wtt(z_0,t_0)(z_1-z_0).
\end{eqnarray*}
By \eqref{eq4.49} and \eqref{eq4.51},
\[\begin{aligned}
\|R_1\|_{\eta}=&|t_1-t_0|\big\|\int_0^1\partial_{t_0}\wtt(z_1,pt_1+(1-p)t_0)
-\partial_{t_0}\wtt(z_0,t_0)dp\big\|_{\eta} = o(|t_1-t_0|).
\end{aligned}\]
Using \eqref{eq4.50}, we have $\|R_2\|_{\eta}=o(\|z_1-z_0\|_{\eta})=o(|t_1-t_0|)$.
Therefore, by \eqref{eq4.56},
\begin{equation}\label{eq4.57}
\partial_{t_0}z_0=\left(I-D\wt{\mathscr{T}}_{cu}
(z_0,t_0)\right)^{-1}\partial_{t_0}\wtt(z_0,t_0).\end{equation}
and we obtain $\partial_{t_0}h_s(\cdot,\cdot,\ep)\in
C^0(X_1^{cu}\times Y_1\times \mathbb{R},X^s)$.
\end{proof}

Since the cut-off function does not change the system in a neighborhood of radius
$\frac r3$ and $r$ is taken independent of $\ep$, we obtain Theorem \ref{thm4.3}.

From the exponential dichotomy, we can also construct the stable
integral manifold. For $\xi_s\in X_1^s$ and $(x,y)\in B_{\eta}^{+}(\infty)$, define
\begin{align}
\mathscr{T}_{s}(x,y,\xi_s,t_0,\ep)(t) =U(t,\ep)\xi_s&+\int_0^tU(t-\tau,\ep)
\begin{pmatrix}
P_{s}\big(F_1(x,y,\tau+t_0,\ep)+f_yy\big)\\0
\end{pmatrix}d\tau \notag\\
&+\int_{+\infty}^{t}U(t-\tau,\ep)
\begin{pmatrix}
P_{cu}\big(F_1(x,y,\tau+t_0,\ep)+f_yy\big)\\G_1(x,y,\tau+t_0,\ep)+g_xx
\end{pmatrix}d\tau, \label{eq4.13}
\end{align}
where $F_1,G_1$ are introduced in \eqref{eq4.1}. One may note that
$F_1$ and $G_1$ are not cut off in $\mathscr{T}_s$ as opposed in the construction
of the center-unstable integral manifold. This rather standard
practice is due to the fact $a_1<0$ and thus the local information near the
steady state along is sufficient to determine the unique stable integral
manifolds where the solutions decay exponentially. Following the same proof as in
Lemma \ref{lemma4.2}, one can prove that if \eqref{eq4.7} is satisfied,
$\mathscr{T}_s$ defines a contraction on $B_{\eta}^{+}(\rho)$, where $\rho$ is
sufficiently small but independent of $\ep<<1$, under the norm
$|\cdot|_{\eta,\eps}^{+}$ given in \eqref{eq4.6}. Therefore

\begin{theorem}\label{thm4.5}
Assume hypotheses (A1)--(A4), (B1)--(B5), and there exists $\eta$ such that $\eta,
k \eta \in (a_1, a_2)$ then
\begin{enumerate}
\item There exist $r>0$ and $\ep_0>0$ and a unique mapping $h_{cu}: B_r (0, X_1^s)
\times \R \times (0, \ep_0) \to B_r (0, X_1^{cu} \times Y_1)$ such that the family of
graphs $\CM_\ep^s (t_0)$ form the locally invariant stable integral manifold of $(0, 0)$
of \eqref{eq3.1}.
\item Solutions are on $\CM_\ep^s$ if and only if they decay with exponential rate
$\eta$ as $t \to +\infty$.
\item $h_{cu} (\xi_s, t_0, \ep)$ is $C^k$ in $\xi_s$ and continuous in $t_0$ with the
norms independent of $\ep$.
\item If we assume hypothesis (A5'), then
\[
\partial_{t_0}h_{cu}(\cdot,\cdot,\ep) \in C^0(B_r (0, X_1^s) \times\mathbb{R},\, X^{cu}
\times Y)
\]
with the norm independent of $\ep$.
\end{enumerate}
\end{theorem}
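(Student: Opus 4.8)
\textbf{Proof proposal for Theorem \ref{thm4.5}.}
The plan is to run the Lyapunov--Perron scheme for the modified, translated system \eqref{eq4.5} exactly as in the center--unstable case of Lemma \ref{lemma4.2} and Proposition \ref{thm4.14}, with the roles of the contracting and non-contracting spectral directions interchanged. First I would fix $\eta$ with $\eta,k\eta\in(a_1,a_2)$ ($\eta$ close to $a_1$) and work on the weighted space $B_\eta^+(\rho)$ of \eqref{eq4.6}, now over the half line $[0,\infty)$: on the contracting subspace $X^s$ one integrates forward from $t=0$, while on the remaining directions $X^{cu}\times Y$ --- where the linearization of \eqref{eq4.5} admits only the backward bound of (B5) --- one integrates backward from $+\infty$, which is precisely the operator $\mathscr{T}_s$ in \eqref{eq4.13}; note $\xi_s\in X_1^s$ contributes no homogeneous term to the $Y$-component, so that component is handled just like the $P_{cu}$ one. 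Here $F_1,G_1$ need not be cut off, since $a_1<0$ forces any solution decaying at rate $\eta$ to remain in a fixed small ball, so only local data near $(0,0)$ enters. The contraction estimate is obtained as in Lemma \ref{lemma4.2}: the only terms absent from the classical setting are the linear couplings $f_yy$, $g_xx$ and the $O(1/\ep)$ generator $J/\ep$, and one disposes of the dangerous $g_xx$ term (and of the $G_1$ term when estimating in $Y_1$) by integrating by parts in the $y$-component, which trades the unitary oscillation $e^{(t-\tau)J/\ep}$ for a factor $\ep J^{-1}$ and lets $\p_\tau$ fall on $g_xx$, producing $\dot x$, which is controlled by the norm in \eqref{eq4.6}. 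This keeps all the smallness constants $\sigma_i$ of \eqref{eq4.7} uniform in $\ep$, so $\mathscr{T}_s$ is a contraction on $B_\eta^+(\rho)$ for $\rho$ small and $\ep<<1$; the fixed point $z(\xi_s,t_0,\ep)=(x,y)$ gives $h_{cu}=(I-P_s)z(0)$ and the graph $\CM_\ep^s(t_0)$.

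Items (1) and (2) then follow from the usual equivalence: $z\in B_\eta^+(\infty)$ is a fixed point of $\mathscr{T}_s$ iff $(x,y)$ solves \eqref{eq4.5} on $[0,\infty)$ with $P_s z(0)=\xi_s$ and decays at rate $\eta$; since the cut-off is inactive on the $\tfrac r3$-ball, this characterizes the solutions of \eqref{eq3.1} that decay at rate $\eta$ as $t\to+\infty$, uniqueness of $h_{cu}$ coming from uniqueness of the fixed point, and flow-invariance of the family $\{\CM_\ep^s(t_0)\}$ from translation in $t_0$. For the $C^k$ smoothness in $\xi_s$ (item (3)) I would differentiate the fixed-point equation formally, $D_{\xi_s}z=(I-D_z\mathscr{T}_s(z,t_0,\ep))^{-1}U$, check that $(I-D_z\mathscr{T}_s)^{-1}$ is bounded with respect to an exponential rate still in $(a_1,a_2)$ (using $k\eta\in(a_1,a_2)$ for the higher derivatives), and then run the standard argument of \cite{CL88, CLL91} on $z(\xi_s+\xi')-z(\xi_s)-D_{\xi_s}z(\xi_s)\xi'$; in this difference the linear terms $f_y,g_x$ cancel, so no differentiation in $t$ --- hence no $1/\ep$ --- is ever needed and all bounds are $\ep$-independent.

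The genuinely delicate part, and the main obstacle, is item (4), the dependence on $t_0$: one cannot differentiate $\mathscr{T}_s$ directly in $t_0$, as that reintroduces $\dot x$ and a factor $1/\ep$. Following Proposition \ref{thm4.14}, I would instead introduce a variant $\widetilde{\ST}_s$ of $\mathscr{T}_s$ obtained by integrating by parts once in the $y$-component, as in \eqref{eq4.41}--\eqref{eq4.43}, so that no time derivative of $z$ appears explicitly, and equip the space with the two norms $\|\cdot\|_\eta^1$ and the weaker $\|\cdot\|_\eta$ of \eqref{eq4.42}. Using (A5$'$) one shows $\widetilde{\ST}_s$ and $D_z\widetilde{\ST}_s$ depend continuously, respectively $C^1$, on $t_0$ in the $\|\cdot\|_\eta$ topology, with the key estimate $\|\p_{t_0}\widetilde{\ST}_s(z)\|_\eta\le C'\|z\|_\eta^1$; feeding this and the contraction property into $z_1-z_0=\p_{t_0}\widetilde{\ST}_s(z_0,t_0)(t_1-t_0)+D_z\widetilde{\ST}_s(z_0,t_0)(z_1-z_0)+R_1+R_2$ with $R_1,R_2=o(|t_1-t_0|)$ yields $\p_{t_0}z_0=(I-D_z\widetilde{\ST}_s(z_0,t_0))^{-1}\p_{t_0}\widetilde{\ST}_s(z_0,t_0)$ and hence $\p_{t_0}h_{cu}\in C^0(B_r(0,X_1^s)\times\R,X^{cu}\times Y)$. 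Because $\p_{t_0}$ brings in the unbounded operators $A$ and $J$, one only recovers $\p_{t_0}h_{cu}$ in the weaker space $X^{cu}\times Y$, not in $X_1^{cu}\times Y_1$; arranging this while keeping every constant independent of $\ep$ is exactly why the two-norm device is needed, and it is the part of the argument that requires care rather than bookkeeping.
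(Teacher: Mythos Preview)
Your proposal is correct and follows essentially the same approach as the paper: the paper also defines the Lyapunov--Perron operator $\mathscr{T}_s$ of \eqref{eq4.13} on $B_\eta^+(\rho)$ (without cutoff, for the reason you give), proves it is a contraction by repeating the argument of Lemma~\ref{lemma4.2} with the integration-by-parts trick on the $g_xx$ term, and then refers to the smoothness argument after Lemma~\ref{lemma4.2} and to Proposition~\ref{thm4.14} (with the variant operator and the two-norm device) for the $t_0$-dependence. One small remark: in the contraction estimate only the $g_xx$ term actually requires integration by parts, the $G_1$ contribution being controlled directly by its small Lipschitz constant $\ovr$, so your parenthetical ``(and of the $G_1$ term when estimating in $Y_1$)'' is unnecessary.
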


In the following, we will give the hypotheses on center-stable and unstable integral
manifold.
\begin{enumerate}
\item[(C1)] There exists a pair of continuous projections $(P_{cs},P_{u})$ on $X$, such
that $P_{cs}+P_u=I_X$ and $X^{cs, u} \triangleq P_{cs,u}X$ are positively invariant under
$e^{t\af}$.
\item[(C2)] There exist constants $a_2'>0$, and $a_1'<a_2'$,
\begin{equation*} \begin{aligned}
&& &|e^{t\af}P_{cs}x|_{X} + |e^{t(\frac{J}{\ep}+g_y)}y|_{Y} \leq Ke^{a_1't}(|x|_{X}
+ |y|_Y) \quad&&\mbox{for}&&\quad t\geq0,\quad x\in X, \; y \in Y \\\
&& &|e^{t\af}P_ux|_{X}\leq Ke^{a_2't}|x|_{X}\quad&&\mbox{for}&&\quad t\leq0,\quad x\in X.
\end{aligned}\end{equation*}
\end{enumerate}
As in Remark \ref{rem4.1}, $e^{t\af}$ and $e^{t(\frac{J}{\ep}+g_y)}$ satisfy the same
estimates with norms replaced by $|\cdot|_{X_1}$ and $|\cdot|_{Y_1}$. Let $P_{cs}X_1=X_1^{cs}$
and $P_uX_1=X_1^u$. By the same proof,

\begin{theorem}\label{thm4.6}
Assume (A1)--(A4), (B1)--(B3), (C1)--(C2), and there exists $\eta$ such that $\eta,
k \eta \in (a_1', a_2')$ then
\begin{enumerate}
\item There exist $r>0$ and $\ep_0>0$ and mappings
\[\begin{aligned}
&& h_{cs}: B_r (0, X_1^u) \times \R \times (0, \ep_0) \to B_r (0, X_1^{cs} \times Y_1) \\
&& h_u: B_r (0, X_1^{cs} \times Y_1) \times \R \times (0, \ep_0) \to B_r (0, X_1^u)
\end{aligned}\]
such that the two families of graphs $\CM_\ep^u (t_0)$ and $\CM_\ep^{cs} (t_0)$ form locally invariant
unstable and center-stable integral manifolds of $(0, 0)$ of \eqref{eq3.1}.
\item A backward flow is well-defined on $\CM_\ep^u$ and solutions are on $\CM_\ep^u$ if and only if they
decay with exponential rate $\eta$ as $t \to -\infty$.
\item $h_{cs} (\xi_u, t_0, \ep)$ is $C^k$ in $\xi_u$ and $h_u (\xi_s, \xi_y, t_0, \ep)$ is
$C^k$ in $\xi_s$ and $\xi_y$ and both are continuous in $t_0$ with the norms independent of $\ep$.
\item If we assume hypothesis (A5'), then
\[
\partial_{t_0}h_{cs}(\cdot,\ep) \in C^0(B_r (0, X_1^u) \times\mathbb{R},\, X^{cs} \times Y)\quad
\p_{t_0}h_u(\cdot,\ep) \in C^0(B_r (0, X_1^s \times Y_1) \times\mathbb{R},\, X^u)
\]
with the norms independent of $\ep$.
\end{enumerate}
\end{theorem}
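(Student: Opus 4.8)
The plan is to obtain Theorem~\ref{thm4.6} as the mirror image, under the time reversal $t\mapsto -t$, of Theorems~\ref{thm4.3} and~\ref{thm4.5}: reversing time turns the dichotomy $(P_s,P_{cu})$ with gap $(a_1,a_2)$ into the dichotomy $(P_{cs},P_u)$ with gap $(a_1',a_2')$, and --- crucially --- the $Y$-directions, whose linearized flow $e^{t(\frac J\ep+g_y)}$ is of pure center type, attach to the center-stable side, exactly as (C1)--(C2) prescribe. Concretely, for the unstable manifold I would translate and cut off the equation as in \eqref{eq4.5}, work on the weighted ball $B_\eta^-(\rho)$ for $\eta\in(a_1',a_2')$, and introduce a Lyapunov--Perron operator $\mathscr{T}_u$ built like $\mathscr{T}_{cu}$ but with the $\int_0^t$ piece carrying only the $P_u$-component in $X$ and the $\int_{-\infty}^t$ piece carrying the $P_{cs}$-component in $X$ together with all of $Y$; since $a_2'>0$ is a genuine backward contraction rate, the nonlinearity need not be cut off here, exactly as $a_1<0$ lets $\mathscr{T}_s$ use $F_1,G_1$ directly. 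The free parameter is $\xi_u\in X_1^u$, and $h_{cs}(\xi_u,t_0,\ep)$ is the $(X^{cs}\times Y)$-component of the fixed point at $t=0$. For the center-stable manifold I would run the same construction on $B_\eta^+(\rho)$ with a cut-off (now the center directions sit on the ``bounded'' side, as in $\mathscr{T}_{cu}$), free parameter $(\xi_{cs},\xi_y)\in X_1^{cs}\times Y_1$, and recover $h_u$.

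Then I would establish the contraction of $\mathscr{T}_u$ and $\mathscr{T}_{cs}$ on $B_\eta^{\pm}(\rho)$ verbatim as in Lemma~\ref{lemma4.2}: the only terms in \eqref{eq4.5} absent from a classical invariant-manifold problem are the linear couplings $f_yy$ and $g_xx$ and the singular factor $\frac J\ep$, and both are handled, as in Lemma~\ref{lemma4.2}, by integrating by parts in the $Y$-component, which converts $g_x\dot x$ into lower-order terms and extracts an extra power of $\ep$ via $(J+\ep g_y)^{-1}=O(1)$ from (A2); consequently the smallness conditions again take the form \eqref{eq4.7} with constants independent of $\ep<<1$. Independence of the manifolds from $\eta$, well-definedness of the backward flow on $\CM_\ep^u$, and the decay characterizations in part (2) follow from uniqueness of the contraction exactly as for $\CM_\ep^{cu}$ and $\CM_\ep^s$.

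For the smoothness in the base variables (part (3)) I would differentiate the fixed-point equations and run the fiber-contraction argument of \cite{CL88, CLL91} sketched after Lemma~\ref{lemma4.2}: in the difference quotient for $D_\xi z$ the linear terms $f_y$ and $g_x$ cancel, the ($\ep$-dependent) evolution operator is uniformly bounded, and no differentiation in $t$ enters, so $C^1$ --- and, under $k\eta\in(a_1',a_2')$, $C^k$ --- dependence with $\ep$-uniform norms is obtained. For the dependence on $t_0$ (part (4) and the continuity of $h_{cs},h_u$ in $t_0$) I would, under (A5$'$), pass to the modified operator obtained by integrating by parts in the $y$-equation --- the analogue of $\wtt$ --- in which no explicit time derivative (hence no $\frac1\ep$) appears, prove as in Proposition~\ref{thm4.14} that it and its $z$-derivative are continuous in $t_0$ on the weighted balls and that $\partial_{t_0}$ of it lies in the weaker-norm ball $\overline{B}_\eta^{\pm}(\infty)$, and then read off the continuity of $h_{cs},h_u$ and of $\partial_{t_0}h_{cs}$ (valued in $X^{cs}\times Y$) and $\partial_{t_0}h_u$ (valued in $X^u$) from the uniform contraction and an identity of the form \eqref{eq4.57}.

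The main obstacle is, as everywhere in this section, uniformity in $\ep$: the operators $\mathscr{T}_u,\mathscr{T}_{cs}$ and their linearizations involve the singular semigroup $e^{t(\frac J\ep+g_y)}$ and the coupling $g_xx$, and a naive estimate loses a factor $\frac1\ep$. The device that resolves this --- systematic integration by parts against the oscillatory factor $e^{t\frac J\ep}$, using $(J+\ep g_y)^{-1}=O(1)$ --- is already in place in Lemma~\ref{lemma4.2} and Proposition~\ref{thm4.14}; the only genuinely delicate bookkeeping is making sure every occurrence (in the contraction, in the linearization in both the $X_1\times Y_1$ and $X\times Y$ norms, and in the $t_0$-derivative) is cast into that form, and this is exactly parallel to what has already been carried out for the center-unstable and stable manifolds.
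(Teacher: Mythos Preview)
Your proposal is correct and follows exactly the approach the paper itself intends: the paper's entire proof of Theorem~\ref{thm4.6} is the phrase ``By the same proof,'' and your description accurately unpacks what that means --- mirror the constructions of $\mathscr{T}_{cu}$ and $\mathscr{T}_s$ under the dichotomy $(P_{cs},P_u)$ with gap $(a_1',a_2')$, place the $Y$-directions on the center-stable side as (C2) dictates, and reuse Lemma~\ref{lemma4.2} and Proposition~\ref{thm4.14} verbatim with the integration-by-parts device ensuring $\ep$-uniformity. Your observation that $\mathscr{T}_u$ need not involve a cut-off because $a_2'>0$ (paralleling $\mathscr{T}_s$ with $a_1<0$) is also on the mark.
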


Let $X_1^c=X_1^{cu}\cap X_1^{cs}=(I_X-P_s-P_u)X_1\triangleq P_cX_1$. By taking the intersection of $\mathcal{M}_{\ep}^{cu}$ and $\mathcal{M}_{\ep}^{cs}$, we can obtain a center manifold in the standard
way.

\begin{theorem}\label{thm4.7}
Assume (A1)--(A4), (B1)--(B5), (C1)--(C2), and there exists $\eta_\pm$ such that $\eta_+,
k \eta_+ \in (a_1', a_2')$ and $\eta_-, k\eta_- \in (a_1, a_2)$ then
\begin{enumerate}
\item There exist $r>0$ and $\ep_0>0$ and mappings
\[
\Psi = (\Psi_s, \Psi_u) : B_r (0, X_1^c \times Y_1) \times \R \times (0, \ep_0) \to
B_r (0, X_1^s \times X_1^u)
\]
such that the family of graphs $\CM_\ep^c (t_0)$ form a locally invariant center integral manifolds
of $(0, 0)$ of \eqref{eq3.1}.
\item A backward flow is well-defined on $\CM_\ep^c$.
\item $\Psi (\xi_c, \xi_y, t_0, \ep)$ is $C^k$ in $\xi_c$ and $\xi_y$ and is continuous in $t_0$
with the norms independent of $\ep$.
\item If we assume hypothesis (A5'), then
\[
\partial_{t_0} \Psi(\cdot,\ep) \in C^0(B_r (0, X_1^c \times Y_1) \times\mathbb{R},\, X^s
\times X_1^u)
\]
with the norms independent of $\ep$.
\end{enumerate}
\end{theorem}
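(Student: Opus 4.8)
The plan is to obtain $\CM_\ep^c(t_0)$ as the intersection $\CM_\ep^{cu}(t_0)\cap\CM_\ep^{cs}(t_0)$ of the center-unstable and center-stable integral manifolds furnished by Theorems~\ref{thm4.3} and~\ref{thm4.6}; under the present hypotheses both are available, and one takes the smaller of the two radii $r$ and the two $\ep_0$. Write a point of $X_1\times Y_1$ near $0$ as $\xi_c+\xi_y+\xi_s+\xi_u$ with $\xi_c\in X_1^c$, $\xi_y\in Y_1$, $\xi_s\in X_1^s$, $\xi_u\in X_1^u$. Such a point lies on $\CM_\ep^{cu}(t_0)$ precisely when $\xi_s=h_s(\xi_c+\xi_u+\xi_y,t_0,\ep)$ and on $\CM_\ep^{cs}(t_0)$ precisely when $\xi_u=h_u(\xi_c+\xi_s+\xi_y,t_0,\ep)$. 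So for fixed $(\xi_c,\xi_y,t_0,\ep)$ I would solve the coupled system
\[
\xi_s=h_s(\xi_c+\xi_u+\xi_y,t_0,\ep),\qquad \xi_u=h_u(\xi_c+\xi_s+\xi_y,t_0,\ep)
\]
for $(\xi_s,\xi_u)$ and define $\Psi=(\Psi_s,\Psi_u)$ to be the solution.

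First I would check this is a uniform contraction in $(\xi_s,\xi_u)$. The map $(\xi_s,\xi_u)\mapsto\big(h_s(\xi_c+\xi_u+\xi_y,t_0,\ep),h_u(\xi_c+\xi_s+\xi_y,t_0,\ep)\big)$ couples $\xi_s$ only to $\xi_u$ and $\xi_u$ only to $\xi_s$, so its Lipschitz constant is the larger of the Lipschitz constant of $h_s$ in its $X_1^{cu}$ argument and that of $h_u$ in its $X_1^{cs}$ argument; from the construction behind Lemma~\ref{lemma4.2} and its center-stable analogue these are $O(\ovr+\eps)$, hence $<1$ once $r,\ep_0$ are small, with bounds independent of $\ep$. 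Since $h_s,h_u$ take values in $B_r$, any fixed point satisfies $|\xi_s|_{X_1}+|\xi_u|_{X_1}=O(\ovr r)$, so after shrinking to $(\xi_c,\xi_y)\in B_r(0,X_1^c\times Y_1)$ the base arguments $\xi_c+\xi_u+\xi_y$ and $\xi_c+\xi_s+\xi_y$ remain in the domains of $h_s,h_u$. The uniform contraction mapping principle then produces a unique $\Psi$, and the graph $\CM_\ep^c(t_0)=\{\xi_c+\xi_y+\Psi(\xi_c,\xi_y,t_0,\ep):(\xi_c,\xi_y)\in B_r\}$; by the parameter-regularity version of that principle, $\Psi$ inherits $C^k$ dependence on $(\xi_c,\xi_y)$ and continuity in $t_0$, with norms controlled only by those of $h_s,h_u$, hence independent of $\ep$. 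This gives (1) and (3).

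For the dynamics, note that the cut-off system~\eqref{eq4.5} has a globally Lipschitz right-hand side and so a flow defined for all time in both directions on $X_1\times Y_1$; each of $\CM_\ep^{cu}(t)$ and $\CM_\ep^{cs}(t)$ is invariant under it, a fact read off from the fixed-point characterizations of $h_s$ and $h_u$ exactly as in the discussion following Remark~\ref{rem4.4}, so their intersection $\CM_\ep^c(t)$ is invariant, yielding both the integral-manifold property and the well-defined backward flow; since~\eqref{eq4.5} coincides with~\eqref{eq3.1} on the $\frac r3$-ball, this is local invariance for~\eqref{eq3.1}, proving (2). For (4), under (A5$'$) I would differentiate the two defining relations in $t_0$:
\[
\partial_{t_0}\Psi_s=\partial_{t_0}h_s+(D_{\xi_{cu}}h_s)\,\partial_{t_0}\Psi_u,\qquad
\partial_{t_0}\Psi_u=\partial_{t_0}h_u+(D_{\xi_{cs}}h_u)\,\partial_{t_0}\Psi_s,
\]
a linear fixed-point system for $(\partial_{t_0}\Psi_s,\partial_{t_0}\Psi_u)$. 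By Theorems~\ref{thm4.3} and~\ref{thm4.6} the inhomogeneous terms are continuous into $X^s$ and into the unstable component space of part (4) of Theorem~\ref{thm4.6}, while $Dh_s,Dh_u$ are bounded with the same small constants when measured in the weaker norm $|\cdot|_X+|\cdot|_Y$ --- this is exactly the additional information delivered by the integration-by-parts estimates \eqref{eq4.10}, \eqref{eq4.43} underlying Proposition~\ref{thm4.14}. Hence the system is again a uniform contraction in the corresponding product of the weaker spaces, giving $\partial_{t_0}\Psi\in C^0(B_r(0,X_1^c\times Y_1)\times\R,X^s\times X_1^u)$ with $\ep$-independent bounds.

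The real difficulty is not the intersection algebra but the uniform-in-$\ep$ bookkeeping: one must verify that the contraction ratio, the $C^k$ bounds on $\Psi$, and --- most delicately --- the bounds in the linear system for $\partial_{t_0}\Psi$ are all independent of $\ep$, and this rests on the structural fact, propagated through Section~\ref{S:InMa} by integration by parts, that $h_s,h_u$ and their derivatives are controlled uniformly in $\ep$ not only in $|\cdot|_{X_1}$ but also in the weaker $|\cdot|_X$ norm. Without that, the $\partial_{t_0}$ step would produce a spurious factor $\ep^{-1}$; with it, the whole argument closes just as in the regular-perturbation center-manifold theory.
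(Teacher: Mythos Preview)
Your approach is exactly the paper's: the text preceding Theorem~\ref{thm4.7} simply says one obtains the center manifold ``by taking the intersection of $\mathcal{M}_{\ep}^{cu}$ and $\mathcal{M}_{\ep}^{cs}$\ldots in the standard way,'' and your contraction argument on $(\xi_s,\xi_u)$ is precisely that standard fill-in, with the regularity of $\Psi$ inherited from $h_s,h_u$ via the uniform contraction principle.

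One correction is needed in your justification of part~(2). You assert that the cut-off system~\eqref{eq4.5} has a globally Lipschitz right-hand side and hence a two-sided flow on all of $X_1\times Y_1$, but this is false: under (A1) the operator $A$ is unbounded and generates only a $C_0$-\emph{semigroup}, so backward solutions need not exist off the invariant manifolds. The correct (and equally short) argument is that backward flow is already well-defined on $\CM_\ep^{cu}(t_0)$ by Theorem~\ref{thm4.3}(2), and since $\CM_\ep^c(t_0)\subset\CM_\ep^{cu}(t_0)$ it inherits the backward flow; forward invariance comes from $\CM_\ep^c(t_0)\subset\CM_\ep^{cs}(t_0)$ together with forward invariance of $\CM_\ep^{cu}$. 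With this fix your proof goes through.
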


\subsection{Asymptotic estimates of Invariant Manifolds} \label{SS:asym}

In Theorems \ref{thm3.1} and \ref{thm3.2}, we have demonstrated that
\eqref{eq3.2} can be viewed as the singular limit of \eqref{eq3.1}
as $\ep\rightarrow0$. Therefore, one may expect the perturbed
invariant manifolds should be close to the unperturbed ones. In this subsection, we will give the
leading order approximation of the invariant manifolds.

In this subsection, we will use the notation
\begin{equation} \label{E:G_*}
F_*^\ep (x, t) = F(x, 0, t, \ep) \qquad G_*^\ep (x, t) = G(x, 0, t, \ep).
\end{equation}
Instead of \eqref{eq3.1} directly, we first consider compare \eqref{eq4.5} with the
following regular perturbation problem
\begin{equation} \label{E:x_*}
\dot{x}_*=\af x_*+F_*^\ep(x_*, t +t_0)
\end{equation}
where we also included the dependence of $F$ on $\ep$. Under assumptions in
Theorem \ref{thm4.3}, $x_*\equiv 0$ is an steady solution and it has local integral
manifolds. In fact, for $\xi_{cu}\in X_1^{cu}$ and $x\in C_{\eta}^{-}(X_1)$, let
\begin{equation} \label{E:wte}\begin{aligned}
\wte(x)(t)=&e^{t\af}\xi_{cu} +\int_{0}^{t}e^{(t-\tau)\af}P_{cu} F_*^\ep(x(\tau),\tau+t_0) d\tau\\
&+\int_{-\infty}^{t}e^{(t-\tau)\af}P_sF_*^\ep(x(\tau),\tau+t_0)d\tau.
\end{aligned} \end{equation}
By the exponential dichotomy (B5), $\wte$ is a contraction on $C_{\eta}^{-}(X_1)$.
For $\xi_{cu}\in X_1^{cu}$, let $x_*(t)$ be the fixed point of $\wte$. Define
\begin{eqnarray*}
&&h_{s}^*(\xi_{cu}, t_0, \ep)=P_s x_*(0)=\int_{-\infty}^0 e^{-\tau \af}P_s F_*^\ep(x_* (\tau), \tau+t_0)
d\tau \\
&&\mathcal{M}_*^{cu}(t_0)=\Big\{\xi_{cu}+h_{s}^*(\xi_{cu}, t_0, \ep)\big|\xi_{cu}\in
X_1^{cu}\Big\}
\end{eqnarray*}
which is the center-unstable manifold of \eqref{E:x_*}.

\begin{prop} \label{P:appro}
Assume the hypotheses in Theorem \ref{thm4.3}, (A5) and (A5'), then
\[\begin{split}
\big|h_{s}(\xi_{cu},0,t_0,\ep)-h_{s}^*(\xi_{cu}, t_0, \ep)&\big|_{X_1}\leq
C'\ep \Big(|g_x|_{L(X, Y)} \\
&+ |D_x G_*^\ep|_{C^0(X_1 \times \R, L(X, Y))} + |\p_t G_*^\ep|_{C^0_t C_x^1 (X_1 \times \R, Y)}\Big)
\end{split}\]
where $h_s$ is defined in \eqref{eq4.11}. If $k\geq 2$, we also have
\[\begin{aligned}
& \big|D_{\xi_y}h_s(\xi_{cu},0,t_0,\ep)\big|_{L(Y_1,X^s)} \leq C' \ep \\
&\big|D_{\xi_{cu}}h_{s}(\xi_{cu},0,t_0,\ep)
-D_{\xi_{cu}}h_{s}^*(\xi_{cu}, t_0, \ep)\big|_{L(X_1^{cu},X_1^s)}\\
\leq & C'\ep \big(|g_x|_{L(X, Y)}
+ |D_x G_*^\ep|_{C_t^0 C_x^1 (X_1 \times \R, L(X, Y))} + |\p_t G_*^\ep|_{C_t^0 C_x^1(X_1 \times \R, Y)}\big)
\end{aligned}\]
where $C'$ depends on $C_0, K,a_1,a_2$, and $|\xi_{cu}|_{X_1}$ and the norm
$C_t^0 C_x^1$ means $C^0$ in $t$ and $C^1$ in $x \in X_1$.
\end{prop}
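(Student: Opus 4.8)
The plan is to compare the two fixed-point problems defining $h_s$ and $h_s^*$. Recall $z(\xi_{cu},\xi_y)=(x,y)$ is the fixed point of $\mathscr{T}_{cu}$ (equivalently $\wtt$) on $B_\eta^-$, with $h_s(\xi_{cu},\xi_y,0,t_0,\ep)=P_sx(0)$, while $x_*$ is the fixed point of $\wte$ on $C_\eta^-(X_1)$ with $h_s^*(\xi_{cu},t_0,\ep)=P_sx_*(0)$. The natural strategy is to regard $x_*$ as an approximate fixed point of the first problem and run a perturbed contraction argument, exactly in the spirit of Lemma~\ref{L:FT} but now on the half-line $(-\infty,0]$ with the exponential-dichotomy weights instead of a finite interval. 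First I would set $\delta x = x - x_*$, $\delta y = y$ and write down the integral equations for $(\delta x,\delta y)$ by subtracting $\wte(x_*)$ from $\wtt(x,y)$; the source terms that do not come from the contraction are (i) $f_yy$, (ii) $g_xx$, (iii) $G(x,y,\cdot)-G_*^\ep(x_*,\cdot)$ split into $G$-Lipschitz pieces plus $G_*^\ep(x_*,\cdot)-G_*^\ep(0,\cdot)=O(\bar r)|x_*|$, and (iv) the difference $F(x,y,\cdot)+f_yy - F_*^\ep(x_*,\cdot)$ which is again a Lipschitz term plus the genuinely dangerous $f_yy$.

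The heart of the estimate, just as in Lemmas~\ref{L:FT} and~\ref{L:FTL}, is that the $y$-component is small. From the $\wtt$ representation we already see $\psi(t)$ carries an explicit prefactor $\ep$ in every term except $\int_0^t e^{(t-\tau)(\frac J\ep+g_y)}G(z,\tau+t_0)\,d\tau$, and since $G$ vanishes at the origin this last integral is bounded by $\bar r$ times $\|z\|_\eta$; but to extract the advertised upper bound in terms of $|D_xG_*^\ep|$ and $|\p_tG_*^\ep|$ rather than $\bar r$, I would further integrate by parts in $\tau$ using $e^{(t-\tau)(\frac J\ep+g_y)}=-\ep\,\p_\tau\big(e^{(t-\tau)(\frac J\ep+g_y)}\big)(J+\ep g_y)^{-1}$ applied to the leading piece $G_*^\ep(x(\tau),\tau+t_0)$ of $G$, which produces $\ep(J+\ep g_y)^{-1}$ boundary terms and an integrand involving $\p_\tau\big((J+\ep g_y)^{-1}G_*^\ep(x,\tau+t_0)\big)=\ep^{-1}$-free expressions in $\p_tG_*^\ep$ and $D_xG_*^\ep\dot x$, with $\dot x$ eliminated through the $x$-equation of \eqref{eq4.5} (this is exactly why (A5), (A5') and $|J^{-1}|_{L(Y,Y_1)}\le C_0$ are invoked). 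Combined with the estimate $\|z\|_\eta^1\le C$ from Lemma~\ref{lemma4.2} and the contraction constant $\sigma'$ in \eqref{eq4.43}, this gives $|y|_{\eta,\eps,Y_1}^-\le C'\ep\big(|g_x|+|D_xG_*^\ep|+|\p_tG_*^\ep|_{C_t^0C_x^1}\big)$.

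Once $|y|$ is known to be $O(\ep)$ in this sense, the $x$-difference equation is handled by the $\wte$-contraction: $\delta x = \big(I - D_x\wte\big)^{-1}$ applied to the remainder, whose non-contraction part is bounded, term by term, by $|g_x||x|+|f_y||y|$ plus the Lipschitz-in-$\ep$ pieces of $F,G$ (all $O(\ep)$ by Theorem~\ref{thm3.1}-type arguments and the $O(\ep)$ bound on $y$), plus the $G_*^\ep(x_*,\cdot)-G_*^\ep(0,\cdot)$ piece which contributes $\bar r$ times the already-controlled quantities. Evaluating at $t=0$ and applying $P_s$ yields the first inequality. For the derivative estimates with $k\ge2$, I would differentiate the fixed-point equations in $\xi_{cu}$ and $\xi_y$ — using the formula $D_\xi z=(I-D_z\wtt)^{-1}U$ from \eqref{E:Dz} and its $\wte$ analogue — and repeat the same two-step scheme on the linearized equations, now invoking Lemma~\ref{L:FTL} rather than Lemma~\ref{L:FT}; the bound $|D_{\xi_y}h_s|_{L(Y_1,X^s)}\le C'\ep$ comes out because the $\xi_y$-direction enters the $x$-equation only through the $O(\ep)$-small $y$-component (this is the linearized analogue of the $|\delta y_*(t_0)|_{Y_1}\le C_1\ep$ hypothesis in Lemma~\ref{L:FTL}), while $D_{\xi_{cu}}h_s-D_{\xi_{cu}}h_s^*$ is $O(\ep)$ for the same reasons as $h_s-h_s^*$, with one extra $D_x$ falling on $G_*^\ep$, which is exactly why the norm on the right upgrades from $C^0$ to $C_t^0C_x^1$.

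The main obstacle, as always in this paper, is the interaction between the unbounded operators $A$, $J$ and the singular factor $\frac1\ep$: the naive bound on the $G$-integral in $\psi$ would only give $O(\bar r)$, not $O(\ep)$, and the integration-by-parts that fixes this forces one to control $\dot x$ via the $x$-equation, which brings in $\af x = Ax + f_xx$ and hence requires the $|D_xG_*^\ep|_{C^0(X_1\times\R,L(X,Y))}$ norm (mapping out of $X_1$, where $\af x$ lives) together with $|J^{-1}|_{L(Y,Y_1)}\le C_0$ to land back in $Y_1$. Keeping careful track of which norm ($X$ versus $X_1$, $Y$ versus $Y_1$) each term naturally lives in — precisely the bookkeeping already exhibited in the proof of Lemma~\ref{L:FTL} and Remark~\ref{rem3.4} — is the only genuinely delicate point; everything else is a routine perturbed-contraction estimate.
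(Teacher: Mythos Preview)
Your strategy for $h_s - h_s^*$ and $D_{\xi_{cu}}h_s - D_{\xi_{cu}}h_s^*$ is essentially the paper's: compare the two contractions and integrate by parts on the $G_*^\ep$ piece to extract the factor $\ep$. The paper streamlines this by noting that $P_X\wtt(x_*,0) = \wte(x_*)$ exactly, so $\wtt(x_*,0)-\wte(x_*) = P_Y\wtt(x_*,0)$, whence the contraction inequality $\|(x-x_*,y)\|_\eta^1 \le \frac{1}{\sigma'}\|\wtt(x_*,0)-\wte(x_*)\|_\eta^1$ reduces everything to a single explicit $Y$-computation; your version reaches the same place a bit less directly.

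Your justification for $|D_{\xi_y}h_s|_{L(Y_1,X^s)}\le C'\ep$, however, has a genuine gap. You claim it follows because ``the $\xi_y$-direction enters the $x$-equation only through the $O(\ep)$-small $y$-component'' and call this the analogue of the hypothesis $|\delta y_*(t_0)|_{Y_1}\le C_1\ep$ in Lemma~\ref{L:FTL}. This is backwards. What is $O(\ep)$ at $\xi_y=0$ is the solution $y$ itself; the derivative $\psi^\ep = D_{\xi_y}y$ is only $O(1)$ --- its ``initial datum'' is a unit vector in $Y_1$. Hence in the linearized $\phi^\ep$-equation the forcing $(D_yF + f_y)\psi^\ep$ is $O(1)$, and your argument yields nothing. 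The correct mechanism is the \emph{other} branch of Lemma~\ref{L:FTL}: $\psi^\ep$ is $O(1)$ but highly oscillatory, so one uses its ODE to write
\[
\psi^\ep = \ep J^{-1}\dot\psi^\ep - \ep J^{-1}\big((g_y+D_yG)\psi^\ep + (D_xG+g_x)\phi^\ep\big),
\]
substitutes this into the integral representation of $\phi^\ep$, and integrates by parts in $\tau$ to move the time derivative off $\dot\psi^\ep$. This averaging is what manufactures the $\ep$, and it is precisely why the bound is only in $X^s$ rather than $X_1^s$: the integration by parts produces $Ae^{(t-\tau)A}$ acting on a $Y_1$-valued integrand, cf.\ Remark~\ref{rem3.4}. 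Your parenthetical analogy to Lemma~\ref{L:FTL} thus points to the wrong case of that lemma --- had it been the right one, you would have obtained the stronger $X_1^s$ norm, which the proposition conspicuously does not claim.
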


The reason the estimate on $D_{\xi_y}h_s$ is only in $Y$ is similar to that for Lemma \ref{L:FTL}.

\begin{rem} \label{R:other}
The exactly same estimates in this subsection also hold for the center, center-stable,
stable, and unstable manifolds except for the latter two, there is no $D_{\xi_y}$ involved.
\end{rem}

\begin{rem} \label{R:appro}
The reason we include some these terms in the above upper bound goes back to transformation
\eqref{E:y_1}. Conceptually, with certain (mild) assumptions on $A$ and $J$, one may carry out a sequence of
transformations in the form of \eqref{E:y_1} to make $g_x =0$ and $G(x, 0, t, 0) = O(\ep^{k-1})$ when
measured in appropriate norms. Therefore the above estimates immediately implies that the integral
manifolds of \eqref{E:x_*} are approximations of those of \eqref{eq4.5} at $\{y=0\}$ with an error of
$O(\ep^k)$. Since \eqref{E:x_*} is a regular perturbation problems of \eqref{eq3.2}, one may compute the
Taylor expansions of the integral manifolds of \eqref{E:x_*} up to the order $O(\ep^{k-1})$ and thus they
also serve as the leading order expansions of the integral manifolds of \eqref{eq4.5} at $\{y=0\}$.
\end{rem}

\begin{proof}
We will denote $P_X$ and $P_Y$ the projection to $X$ and $Y$. Let $(x,y)$ be the fixed point of
$\wtt$ with parameters $\xi_{cu}$ and $\xi_y=0$ and $x_*$ be the fixed point of $\wte$ with the
parameter $\xi_{cu}$. In the rest of the proof, we will use $\wtt(x,y,\ep)$ to denote $\wtt(z,t_0)$,
which is introduced in \eqref{eq4.41}. From \eqref{eq4.43},
\begin{eqnarray*}
\|(x-x_*,y)\|_{\eta}^1&\leq&\|\wtt(x,y)-\wtt(x_*,0)\|_{\eta}^1+\|\wtt(x_*,0)-\wte(x_*)\|_{\eta}^1\\
&\leq&(1-\sigma')\|(x-x_*,y)\|_{\eta}^1+\|\wtt(x_*,0)-\wte(x_0)\|_{\eta}^1,
\end{eqnarray*}
which implies
\[
\|(x-x_*,y)\|_{\eta}^1\leq\frac{1}{\sigma'}\|\wtt(x_*,0)-\wte(x_*)\|_{\eta}^1.
\]
From the definitions of $\wtt$ and $\wte$, one may compute by integrating by parts
\begin{eqnarray*}
&&\big(\wtt(x_*,0)-\wte(x_*) \big) (t) = P_Y \wtt(x_*,0) (t)\\
&=& \ep (J + \ep g_y)^{-1} \Big( \int_0^t e^{(t-\tau)(\frac{J}{\ep}+g_y)} \Big( \p_t G_*^\ep
+ \big(g_x + D_x G_*^\ep \big) \big(\af x_*+F_*^\ep \big)\Big) d\tau \\
&& + e^{t(\frac{J}{\ep}+g_y)} \big(g_x x_*(0) + G_*^\ep(x_*(0), t_0)\big) - \big(g_x
x_*(t) + G_*^\ep(x_*(t), t_0 +t) \big) \Big)
\end{eqnarray*}
where $\p_t G_*^\ep$, $D_x G_*^\ep$, and $F_*^\ep$ in the above integral are evaluated at $(x_*(\tau),
t_0 + \tau)$. Using assumption (B2), we immediately obtain
\begin{equation}\label{eq4.61} \begin{split}
\|(x-x_*,y)\|_{\eta}^1\leq& C'\ep (|g_x|_{L(X, Y)} + |D_x G_*^\ep|_{C^0(X_1 \times \R, L(X, Y))}
+ |\p_t G_*^\ep|_{C_t^0C_x^1(X_1 \times \R, Y)})
\end{split}\end{equation}
where we need $|\p_t G_*^\ep|_{C_t^0C_x^1}$ to bound $\p_t G_*^\ep$ by $|\p_t D G_*^\ep||x|$ and $|x|$ provides
the necessary decay in $t$. Consequently the estimate on $h_s - h_s^*$ follows.

To prove the second part, choose $\eta$ such that $a_1<\eta,2\eta<s_2$. Let
$(\phi^{\ep}(t),\psi^{\ep}(t))$ be the derivative of $(x(t),y(t))$ with respect to
$\xi_{cu}$ at $(\xi_{cu},0)$ and $\phi^*(t)$ be the derivative of $x_*(t)$ with
respect to $\xi_{cu}$, so we have
\begin{equation}\label{eq4.62}
(\phi^{\ep},\psi^{\ep})=D\wtt(x,y)(\phi^{\ep},\psi^{\ep})+e^{t\af}\quad,
\quad\phi^*=D\wte(x_*)(\phi^*)+e^{t\af}.
\end{equation}
As in Theorem \ref{thm4.3}, It is easy to show $\|\phi^*\|_{L(X_1^{cu},C_{i\eta}^{-}
(X_1))}$ is bounded uniformly in $\ep$ for $i=1,2$. By \eqref{eq4.62},
\begin{eqnarray}\label{eq4.63}
\begin{aligned}
\Big|(\phi^{\ep}-\phi^*,\psi^{\ep})\Big|_{L(X_1^{cu},\widetilde{B}_{2\eta}^{-}(\infty))}
\leq &\Big|D\wtt(x,y)(\phi^{\ep}-\phi^*,\psi^{\ep})\Big|_{L(X_1^{cu},\widetilde{B}_{2\eta}^{-}(\infty))}\\
&+\Big|(D\wtt(x,y)-D\wtt(x_*,0))(\phi^*,0)\Big|_{L(X_1^{cu},\widetilde{B}_{2\eta}^{-}(\infty))}\\
&+\Big|D\wtt(x_*,0)(\phi^*,0)-D\wte(x_*)\phi^*\Big|_{L(X_1^{cu},\widetilde{B}_{2\eta}^{-}(\infty))}.
\end{aligned}
\end{eqnarray}
In the first term on the right side, $D\wtt(x,y)$ is bounded by $1-\sigma'$ according to \eqref{eq4.43}.
Using \eqref{eq4.61} and the fact that $F$ and $G$ are $C^2$, we obtain through straight forward
computation
\begin{equation} \label{eq4.64} \begin{split}
&\Big|\big(D\wtt(x,y)-D\wtt(x_*,0)\big)(\phi^*,0)\Big|_{L(X_1^{cu},\widetilde{B}_{2\eta}^{-}(\infty))}\\
\leq& C'\ep \Big(|g_x|_{L(X, Y)} + |D_x G_*^\ep|_{C^0(X_1 \times \R, L(X, Y))} + |\p_t G_*^\ep|_{C_t^0 C_x^1
(X_1 \times \R, Y)}\Big).
\end{split}\end{equation}
For the last term in \eqref{eq4.63}, one may calculate
\[\begin{split}
& D\wtt(x_*,0)(\phi^*,0)-D\wte(x_*)\phi^* = P_Y D\wtt(x_*,0)(\phi^*,0)\\
= &\ep (J + \ep g_y)^{-1} \Big( \int_0^t e^{(t-\tau)(\frac{J}{\ep}+g_y)} \big( D_x \p_t G_*^\ep + D_x^2 G_*^\ep
(\af x_*+F_*^\ep) \\
& + (g_x + D_x G_*^\ep ) (\af +D_x F_*^\ep)\big) \phi_* d\tau \\
& + e^{t(\frac{J}{\ep}+g_y)} \big(g_x + D_x G_*^\ep(x_*(0), t_0)\big) \phi_* (0) - \big(g_x
+ D_x G_*^\ep(x_*(t), t_0 +t) \big) \phi_*(t) \Big)
\end{split}\]
where $F_*^\ep$, $D_x \p_t G_*^\ep$, $D_x^2 G_*^\ep$, and $D_x F_*^\ep$ in the above integral are evaluated
at $(x_*(\tau), t_0 + \tau)$. Along with \eqref{eq4.43}, \eqref{eq4.63}, and \eqref{eq4.64}, it implies
\[
\begin{aligned}
\Big|(\phi^{\ep}-\phi^*,\psi^{\ep})\Big|_{L(X_1^{cu},\widetilde{B}_{2\eta}^{-}(\infty))}
\leq & C'\ep \big(|g_x|_{L(X, Y)} \\
&+ |D_x G_*^\ep|_{C_t^0 C_x^1(X_1 \times \R, L(X, Y))} + |\p_t G_*^\ep|_{C_t^0 C_x^1 (X_1 \times \R, Y)}\big)
\end{aligned}\]
and thus the estimates on $D_{\xi_{cu}} h_s - D_{\xi_{cu}} h_s^*$.

Finally, with slight abuse of notation, we still use $(\phi^{\ep},\psi^{\ep})$ to denote
the derivative of $(x,y)$ with respect to $\xi_y$ at $\xi_y=0$. Using \eqref{eq4.43} and
\eqref{eq4.61}, it is straight forward to show that, at $\xi_y =0$,
\begin{equation} \label{E:y=0}
|y|_{C_{\eta'}^- (Y_1)} \le C'\ep \qquad  |(\phi^{\ep},\psi^{\ep})|_{L(Y_1,
\tilde B_{\eta'}^- (\infty))} \le C'
\end{equation}
where $\eta'$ can be taken in a compact subinterval of $(a_1, a_2)$. Clearly
\[
\phi^{\ep}=P_X \big(D_x \wtt (x, y) \phi^\ep + D_y \wtt(x, y)
\psi^\ep\big).
\]
Like $\wte$, one can show $P_X D_x \wtt (x, y)$ is a linear contraction on $C_\eta^- (X)$. Therefore
\begin{equation}\label{eq4.67}
|\phi^{\ep}|_{L(Y_1, C_{2\eta}^- (X))} \leq C' |P_X D_y \wte(x, y) \psi^\ep|_{L(Y_1, C_{2\eta}^-
(X))}.
\end{equation}
To estimate the right side, we notice that $\psi^\ep$ satisfies
\[
\psi_t^{\ep}(t)=\big(\frac{J}{\ep}+g_y+D_yG(x,y,t+t_0,\ep)\big)\psi^{\ep}(t)
+\big(D_xG(x,y,t+t_0,\ep)+g_x\big)\phi^{\ep}(t)
\]
which can be rewritten as
\[
\psi^{\ep}=\ep J^{-1}\psi_t^{\ep}-\ep J^{-1}\big((g_y+D_yG)\psi^{\ep}+(D_xG+g_x)\phi^{\ep}\big).
\]
Substitute this identity into \eqref{eq4.67} and use \eqref{E:y=0}, we obtain
\[\begin{split}
&|\phi^{\ep}|_{L(Y_1, C_{2\eta}^- (X))} \leq C' \ep\big(1 + |P_X D_y \wte(x, y) J^{-1}
\psi_t^{\ep}|_{L(Y_1, C_{2\eta}^- (X))}\big)\\
\leq & C' \ep \Big(1 + \sup_{t\leq0}e^{-2\eta t}\Big| \big(\int_{0}^{t} P_{cu} + \int_{-\infty}^t
P_s \Big) e^{(t-\tau)\af} \big(D_yF(x,y,\tau+t_0,\ep)+f_y\big) \\
&\qquad J^{-1} \psi_t^{\ep} d\tau\Big|_{L(Y_1,X^{cu})} \Big)
\end{split}\]
Integrating by parts and using \eqref{E:y=0} to control $\dot y$, we obtain the
desired estimates. The estimates can not be improved to the norm $X_1^s$ as $A$ is
produced in the integration by parts.
\end{proof}

To consider invariant manifolds in larger ranges, let $\Phi(T,t_0,z,\ep)$ and $\Phi^*(T,t_0,x, \ep)$ be
solutions of \eqref{eq4.5} and \eqref{E:x_*} from time $0$ to $T-t_0$ (so from time $t_0$ to $T$ for \eqref{eq3.1}
and \eqref{E:x_*0}) with $\Phi(t_0,t_0,z,\ep)=z=(x, y)$ and $\Phi^*(t_0,t_0,x, \ep)x$. We will skip writing $t_0$ and $\ep$ in $\Phi$ and $\Phi^*$ in the next proposition
as it would not be altered. Combining Lemma \ref{L:FT}, Lemma \ref{L:FTL}, Remark \ref{rem3.4}, \ref{rem3.6}, and
Proposition \ref{P:appro}, we obtain in a straight forward manner

\begin{prop}\label{thm4.20}
If the hypotheses in Theorem \ref{thm4.3}, (A5) and (A5') hold for $k=2$, then
there exists $C'$ which depends on
$C,K,\eta,a_1,a_2,\ovr,\eps,|T-t_0|,|\xi_{cu}|_{X_1}$ such that
\begin{eqnarray*}
&&\Big|\Phi(T, \xi_{cu}+h_{s}(\xi_{cu},0))-\Phi^*(T,\xi_{cu}+h_{s}^*(\xi_{cu}))
\Big|_{X_1\times Y_1}\\
\leq && C'\ep \Big(|g_x|_{L(X, Y)} + |D_x G_*^\ep|_{C^0(X_1 \times \R, L(X, Y))} + |\p_t G_*^\ep|_{C^0(X_1
\times \R, Y)} \Big);\\
&&\Big|D_{\xi_{cu}}\left(\Phi(T,\xi_{cu}+h_{s}(\xi_{cu},0))\right)
-D_{\xi_{cu}}\left(\Phi^*(T,\xi_{cu}+h_{s}^*(\xi_{cu}))\right)\Big|_{L(X_1^{cu},X_1\times
Y_1)} \\
&&+ \Big|P_Y\big(D_{\xi_{y}}\left(\Phi(T,\xi_{cu}+\xi_y+ h_{s}(\xi_{cu},\xi_y))\right)
-E(T,\xi_{cu}+h_s^*(\xi_{cu}))\big)\Big|_{\xi_y=0}\Big|_{L(Y_1,Y_1)}\\
\leq && C'\ep \big(|g_x|_{L(X, Y)} + |D_x G_*^\ep|_{C_t^0 C_x^1 (X_1 \times \R, L(X, Y))}
+ |\p_t G_*^\ep|_{C_t^0 C_x^1(X_1 \times \R, Y)}\big)\\
&&\Big|P_X\big(D_{\xi_{y}}\left(\Phi(T,\xi_{cu}+\xi_y+h_{s}(\xi_{cu},\xi_y))\right)\big)
\Big|_{\xi_y=0}\Big|_{L(Y_1,X)} \leq C'\ep
\end{eqnarray*}
where $E(T, t_0, \xi_{cu}+h_s^*(\xi_{cu}),\ep)$ is the evolution operator generated by $\frac{J}{\ep}+D_yg_*^\ep
(\Phi^*(t,\xi_{cu}+h_{s}^*(\xi_{cu})), t)$ with initial time $t_0$ and terminal time $T$ and $P_X,P_Y$ denote
the projection from $X\times Y$ to $X$ and $Y$, respectively.
\end{prop}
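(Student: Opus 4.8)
The proof assembles the finite--time comparison results of Section~\ref{S:FT}, applied now to the cut--off, $t_0$--translated systems \eqref{eq4.5} and \eqref{E:x_*} in place of \eqref{eq3.1} and \eqref{eq3.2}, together with Proposition~\ref{P:appro}, which already gives that the base points of the two center--unstable manifolds, $\xi_{cu}+h_s(\xi_{cu},0)$ and $\xi_{cu}+h_s^*(\xi_{cu})$, differ by $O(\ep)$ with exactly the asserted upper bound. First I would record that \eqref{eq4.5} is of the form \eqref{eq3.1} with $A$ replaced by $\af$, with the anti--self--adjoint generator $\frac J\ep$ perturbed by the bounded operator $g_y$, and with nonlinearities $f_yy+F$ and $g_xx+G$ satisfying (A1)--(A5) and (A5$'$) for $k=2$ with constants uniform in $\ep$ along bounded orbits; correspondingly \eqref{E:x_*} is its singular limit and the driving term ``$g(x,0,t,\ep)$'' at $y=0$ becomes $g_xx+G_*^\ep(x,t)$, whose derivative is $g_x+D_xG_*^\ep$. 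This is why $|g_x|_{L(X,Y)}$, $|D_xG_*^\ep|$ and $|\p_tG_*^\ep|$ are the quantities on the right--hand sides: in the integration by parts of Lemma~\ref{L:FT} the pair $g_x+D_xG_*^\ep$ is applied to $\af x_*+F_*^\ep\in X$, which forces the $L(X,Y)$ rather than $L(X_1,Y)$ norm. With this dictionary, Lemma~\ref{L:FT}, Lemma~\ref{L:FTL}, Remark~\ref{rem3.4} and Remark~\ref{rem3.6} transfer directly.

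For the zeroth--order estimate I would follow the pattern of the proof of Theorem~\ref{thm3.1}. Since $\xi_y=0$ and $h_s$ takes values in $X_1^s$, the point $\xi_{cu}+h_s(\xi_{cu},0)$ lies on $\CM_\ep^{cu}(t_0)$ with vanishing $Y$--component. Let $(x_1,y_1)$ solve \eqref{eq4.5} with $x_1(0)=\xi_{cu}+h_s^*(\xi_{cu})$ and $y_1(0)=0$; Lemma~\ref{L:FT} (for \eqref{eq4.5}, \eqref{E:x_*}) bounds $|x_1-\Phi^*|_{X_1}+|y_1|_{Y_1}$ by $C'\ep(\cdots)$ on the time interval, while variation of parameters and Gronwall's inequality bound the $X_1\times Y_1$ distance between $(x_1,y_1)$ and $\Phi(\cdot,\xi_{cu}+h_s(\xi_{cu},0))$ by $Ce^{C|T-t_0|}|h_s(\xi_{cu},0)-h_s^*(\xi_{cu})|_{X_1}$, which is $O(\ep(\cdots))$ by Proposition~\ref{P:appro}. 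Adding the two yields the first inequality, with $C'$ depending on the dichotomy constants, $|T-t_0|$, and $|\xi_{cu}|_{X_1}$ (the last through the bound on $x_*$ in Lemma~\ref{L:FT}).

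For the derivatives I would differentiate through the base point by the chain rule. The $\xi_{cu}$--derivative of the base point has the form $(\mathrm{id}_{X^{cu}}+D_{\xi_{cu}}h_s(\xi_{cu},0),0)$, with vanishing $Y$--component, so Lemma~\ref{L:FTL} and Remark~\ref{rem3.4} compare the linearized flow of \eqref{eq4.5} with the decoupled linearization along $x_*=\Phi^*$ (the analogue of \eqref{E:x_*0L} for the translated cut--off system) in the full $X_1\times Y_1$ norm with error $O(\ep(\cdots))$; combining this with the $L(X_1^{cu},X_1^s)$ estimate on $D_{\xi_{cu}}h_s-D_{\xi_{cu}}h_s^*$ from Proposition~\ref{P:appro} and the uniform $X_1$--boundedness of the linearized flow of \eqref{E:x_*} gives the $D_{\xi_{cu}}$ estimate. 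At $\xi_y=0$ the $\xi_y$--derivative of the base point is $(D_{\xi_y}h_s(\xi_{cu},0),\mathrm{id}_{Y_1})$; by Proposition~\ref{P:appro} its $X$--component is only $O(\ep)$ in $X$ (merely $O(1)$ in $X_1$) while its $Y$--component has norm $1$, which is exactly the borderline case of Lemma~\ref{L:FTL}. Applying that lemma and Remark~\ref{rem3.6} against the decoupled linearization along $x_*$, whose $\delta y_*$--component is by construction the evolution operator $E(T,t_0,\xi_{cu}+h_s^*(\xi_{cu}),\ep)$ and whose $\delta x_*$--component stays $O(\ep)$ in $X$ because its initial datum is, I obtain $|P_Y(D_{\xi_y}\Phi(T,\cdot)-E(T,\cdot))|_{L(Y_1,Y_1)}\le C'\ep(\cdots)$ and $|P_XD_{\xi_y}\Phi(T,\cdot)|_{L(Y_1,X)}\le C'\ep$.

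The main difficulty is not analytic but one of bookkeeping: keeping track of which pair of norms ($X$ versus $X_1$, $Y$ versus $Y_1$) survives each comparison. Because $D_{\xi_y}h_s$ is controlled only in $X$, the $\xi_y$--derivative is pushed into the weaker conclusions of Lemma~\ref{L:FTL} and Remark~\ref{rem3.6}; this is precisely why no $X_1$--estimate is available for the $P_XD_{\xi_y}$ term and why the evolution operator $E$, rather than a genuine $x$--flow, is the natural object against which to compare the $P_Y$--part. Everything else is the routine insertion of auxiliary solutions to absorb the $O(\ep)$ mismatch of initial data between the two manifolds, followed by Gronwall's inequality over the bounded time interval.
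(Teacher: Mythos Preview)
Your proposal is correct and follows exactly the approach indicated in the paper: the paper's proof of this proposition consists only of the single sentence ``Combining Lemma~\ref{L:FT}, Lemma~\ref{L:FTL}, Remark~\ref{rem3.4}, \ref{rem3.6}, and Proposition~\ref{P:appro}, we obtain in a straight forward manner,'' and you have correctly filled in the bookkeeping those combinations require. In particular your identification of which norm ($X$ versus $X_1$) survives each comparison, and why the $P_XD_{\xi_y}$ term is forced into the weaker $X$--estimate via the ``otherwise'' case of Lemma~\ref{L:FTL}, matches the paper's explicit remark following the statement of the proposition.
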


See Remark \ref{R:appro} for the explanation why the above upper bounds are taken in such a tedious form.
Also, the reason $P_X D_{\xi_{y}}\Phi$ is estimated only in $X$ is Lemma \ref{L:FTL}.

Since the system is autonomous when $\ep=0$, we also expect the
derivatives of the integral manifolds in $t_0$ is of order $O(\ep)$.
This will be used in studying homoclinic orbits.

\begin{prop}\label{thm4.26}
Assume the same condition as in Proposition \ref{thm4.14} for $k=2$ and
$|\xi_y|_{Y_1}\leq C_1\ep$, then
\[
|\partial_{t_0}h_s(\xi_{cu},\xi_y,\cdot,\ep)|_{C^0(\mathbb{R},X^s)}\leq C'\ep,
\]
where $C'$ depends on $C_1,|\xi_{cu}|_{X_1}$ and constants in
assumptions.
\end{prop}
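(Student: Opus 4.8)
The plan is to reuse the representation of the $t_0$-derivative of the fixed point already obtained in the proof of Proposition \ref{thm4.14}. Write $z_0=(x,y)$ for the fixed point of $\wtt(\cdot,t_0)$ with parameters $\xi=(\xi_{cu},\xi_y)$, so that $h_s(\xi_{cu},\xi_y,t_0,\ep)=P_sx(0)$ and, by \eqref{eq4.57}, $\partial_{t_0}z_0=(I-D\wtt(z_0,t_0))^{-1}\partial_{t_0}\wtt(z_0,t_0)$, hence $\partial_{t_0}h_s=P_s(\partial_{t_0}z_0)(0)$. The second bound in \eqref{eq4.43} shows that $I-D\wtt(z_0,t_0)$ is boundedly invertible on $\overline{B}_\eta^-(\infty)$ with norm at most $1/\sigma'$, uniformly in $\ep$. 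Therefore $|\partial_{t_0}h_s|_X\le\|\partial_{t_0}z_0\|_\eta\le\frac1{\sigma'}\|\partial_{t_0}\wtt(z_0,t_0)\|_\eta$, and everything reduces to proving $\|\partial_{t_0}\wtt(z_0,t_0)\|_\eta\le C'\ep$.

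First I would observe that $\partial_{t_0}\wtt(z,t_0)$ — differentiating with $z$ held fixed — only feels the explicit $t_0$-dependence of the definition \eqref{eq4.41}, which enters solely through $F(z,\cdot+t_0,\ep)$ and $G(z,\cdot+t_0,\ep)$; the terms $U(t,\ep)\xi$, $f_yy$, $g_xx$, and $\af x$ carry no $t_0$. Hence $\partial_{t_0}\wtt(z,t_0)$ is the same Lyapunov--Perron-type expression as $\wtt$ with $(F,G)$ replaced by $(\partial_tF,\partial_tG)$, up to an $O(\ep^2)$ term from the last integral in $\psi$. Now (B1) (together with the resulting $t$-independence of $f_x,f_y,g_x,g_y$) gives $\partial_tF_1(\cdot,\cdot,\cdot,0)\equiv0$ and $\partial_tG_1(\cdot,\cdot,\cdot,0)\equiv0$, while (B2) gives $\partial_tF_1(0,0,t,\ep)=\partial_tG_1(0,0,t,\ep)=0$; combined with the bounds on $D\partial_\ep\partial_tf$ and $D\partial_\ep\partial_tg$ in (A5$'$), a Taylor expansion first in $\ep$ and then in $(x,y)$ yields $|\partial_tF(x,y,t,\ep)|_X+|\partial_tG(x,y,t,\ep)|_Y\le C\ep(|x|_{X_1}+|y|_{Y_1})$. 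Feeding $z_0(\tau)$ into this, using its exponential bound $|z_0(\tau)|_{X_1\times Y_1}\le C\|z_0\|_\eta^1e^{\eta\tau}$, and running the integral estimates exactly as in Lemma \ref{lemma4.2} gives both $\partial_{t_0}\wtt(z_0,t_0)\in\overline{B}_\eta^-(\infty)$ (needed above) and $\|\partial_{t_0}\wtt(z_0,t_0)\|_\eta\le C\ep\|z_0\|_\eta^1$.

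It remains to bound $\|z_0\|_\eta^1$: by Lemma \ref{lemma4.2} and Remark \ref{rem4.15}, $\|z_0\|_\eta^1\le\rho_0$ with $\rho_0$ controlled by $|\xi_{cu}|_{X_1}$, $|\xi_y|_{Y_1}/\eps$, and the constants in the hypotheses, so the assumption $|\xi_y|_{Y_1}\le C_1\ep$ makes $\|z_0\|_\eta^1\le C'$ with $C'$ depending only on $C_1$ and $|\xi_{cu}|_{X_1}$. Combining the three steps, $|\partial_{t_0}h_s|_X\le\frac1{\sigma'}C\ep\|z_0\|_\eta^1\le C'\ep$, uniformly in $t_0$. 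The one point requiring care is the second step: one must check that $\partial_tF$ and $\partial_tG$ acquire the factor $\ep$ only in the weaker norms $X,Y$ (which is precisely why $\partial_{t_0}h_s$ lands in $X^s$ and not $X_1^s$) and keep the exponential weights straight so that the inversion in \eqref{eq4.57} is legitimate in $\overline{B}_\eta^-(\infty)$; once that is in hand, the hypothesis $|\xi_y|_{Y_1}\le C_1\ep$ does nothing more than hold the final constant at order one.
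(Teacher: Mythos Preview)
Your proof is correct and follows essentially the same approach as the paper: reduce via \eqref{eq4.57} to estimating $\|\partial_{t_0}\wtt(z_0,t_0)\|_\eta$, then use (B1) and (B2) to write $\partial_tF$ and $\partial_tG$ as a double Taylor expansion (first in $\ep$, then in $z$) to extract the factor $\ep|z_0|$, which the paper records compactly as $F_t=\ep\int_0^1\int_0^1 DF_{t\ep}(\tau_2 z,t,\tau_1\ep)\,z\,d\tau_2\,d\tau_1$. The paper states the resulting bound with weight $2\eta$ rather than $\eta$, but your version with $\eta$ works equally well since the contraction \eqref{eq4.43} holds on $\overline{B}_\eta^-(\infty)$.
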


\begin{rem} \label{R:p_th_s2}
If in (A5) and (A5') we assume the smoothness of $(f, g) : X_1 \times Y_1 \times \R^2 \to X_1 \times Y_1$, the same
proof implies $\p_{t_0} h_s \in X_1^s$ is of $O(\ep)$. See also Remark \ref{R:p_th_s1}.
\end{rem}

\begin{proof}
Let $z_0=(x_0,y_0)$ be the fixed point of $\wtt(\cdot,t_0)$ and
$(\phi,\psi)=(\partial_{t_0}x_0,\partial_{t_0}y_0)$. From \eqref{eq4.57}, we only need to estimate $\p_{t_0}
\wtt(z_0,t_0)$. Notice, from assumptions (B1) and (B2),
\[
F_t(z, t, \ep) = \ep \int_0^1 F_{t\ep} (z, t, \tau_1 \ep) d\tau_1 = \ep \int_0^1 \int_0^1 DF_{t\ep} (\tau_2 z,
t, \tau_1) z d\tau_2 d\tau_1
\]
and similar estimate holds for $G$. These estimates immediately implies
\[
\|\p_{t_0} \wtt(z_0,t_0)\|_{2\eta} \leq C'\ep \|z_0\|_{2\eta}^1
\]
and the thus the proposition follows.
\end{proof}

\section{Invariant Foliation} \label{S:InFo}

With the $C^k$ center-stable (center-unstable) integral manifolds constructed in the previous section,
we will give the sketch of the construction of the stable (unstable) fibres inside the center-stable
(center-unstable) manifold under the same assumptions in this section. We will use the stable fibres as an
illustration and similar construction also works for unstable fibers.

For $\xi_{cy}=(\xi_c,\xi_y)\in X_1^c\times Y_1$, let $\left(x(\xi_{cy})(t),y(\xi_{cy})(t)\right)$ be the solution
of \eqref{eq4.5} with the initial value (at $t=0$) on the center manifold
\begin{equation}\label{eq5.1}
\xi=\xi_{cy}+\Psi_s(\xi_{cy},t_0,\ep)+\Psi_u(\xi_{cy},t_0,\ep).\end{equation}
The solution stays on $\mathcal{M}_{\ep}^c (t)$ and satisfies
\begin{equation}\label{eq5.2}
\begin{pmatrix}
  x(t)\\
  y(t)
\end{pmatrix}
=U(t,\ep)\xi+\int_0^tU(t-\tau,\ep)\begin{pmatrix}
  F(x,y,\tau+t_0,\ep)+f_yy\\
  G(x,y,\tau+t_0,\ep)+g_xx
\end{pmatrix}d\tau.
\end{equation}
To simplify our notation, for $(\wt{x},\wt{y})\in X_1\times Y_1$, we write
\begin{eqnarray*}
\wt{F}(\wt{x},\wt{y},\xi_{cy},t,\ep)&=&F(x(\xi_{cy})(t)+\wt{x},y(\xi_{cy})(t)+\wt{y},t +t_0,\ep)\\
&&-F(x(\xi_{cy})(t),y(\xi_{cy})(t),t +t_0,\ep)
\end{eqnarray*}
or very often in short as $\wt{F}(\wt{x},\wt{y},\xi_{cy})$. Such notation also applies to $G$.

For each triple $(\xi_s,\xi_c,\xi_y)\in X_1^s\times X_1^c\times Y_1$ and $a_1<\eta<a_2$, it is the standard
knowledge that $(x(t), y(t))$ is a solution of
\eqref{eq4.5} satisfying
\[
P_s \big(x(0) - x(\xi_{cu})(0)\big)=\xi_s \quad \text{ and } \quad \big(\wt{x},\wt{y} \big) \triangleq
\big(x, y\big) - \big( (x(\xi_{cy}),y(\xi_{cy}) \big) \in B_{\eta}^{+}(\infty)
\]
where $B_{\eta}^{+}(\infty)$ was defined in \eqref{eq4.6}, if and only if $\big(\wt{x}(\cdot),\wt{y}(\cdot)\big)$
is a fixed point of
\begin{equation}\label{eq5.4}
\begin{aligned}
\mathscr{G}_s(\xi_s,\xi_{cy},t_0,\ep)(\wt{x},\wt{y})(t)\triangleq &U(t,\ep)\xi_s+\int_0^tU(t-\tau,\ep)
\begin{pmatrix}
P_{s}\big(\wt{F}(\wt{x},\wt{y},\xi_{cy})+f_y\wt{y}\big)\\
0\end{pmatrix}d\tau \\
&+\int_{+\infty}^tU(t-\tau,\ep)\begin{pmatrix}
P_{cu}\big(\wt{F}(\wt{x},\wt{y},\xi_{cy})+f_y\wt{y}\big)\\
\wt{G}(\wt{x},\wt{y},\xi_{cy})+g_x\wt{x}
\end{pmatrix}d\tau.
\end{aligned}
\end{equation}

One first notices that, for fixed $\xi_{cy}$, $\mathscr{G}_s$ has the same form as $\mathscr{T}_s$ with only an
additional parameter $\xi_{cy}$. Moreover, by \eqref{eq4.3}
\[
\wt{F}(0,0,\xi_{cy},\ep)=\wt{G}(0,0,\xi_{cy},\ep)=0 \quad \;
|D\wt{F}|_{C^0}=|DF|_{C^0}\leq\ovr\quad \;
|D\wt{G}|_{C^0}=|DG|_{C^0}\leq\ovr
\]
where $D$ is the differentiation with respect to $(\wt{x},\wt{y})$ or $(x,y)$. Through exactly the same procedure
as in Section \ref{S:InMa}, we obtain that $\mathscr{G}_s$ defines a contraction on $B_{\eta}^{+}(\infty)$ under
the norm $|\cdot|_{\eta,\eps}^{+}$ defined in \eqref{eq4.6}. Clearly, if $\xi_s=0$, $(\wt{x},\wt{y})=(0,0)$ is
the unique fixed point of \eqref{eq5.4}. Moreover, in the study of the $C^k$ smoothness of the fixed point $(\wt{x},\wt{y})$ with respect to $\xi_s$, the linear terms $f_y$ and $g_x$, which are not small, disappear again. The evolution operator $U$ has a uniform bound though it depends on $\ep$. Therefore the exactly standard argument in
regular perturbations \cite{CLL91} (where no differentiation in $t$ is need which would product $\frac 1\ep$)
applies and yield the smoothness of $(\wt{x},\wt{y})\in B_{\eta}^{+}(\infty)$ in $\xi_s$. Therefore, by fixing $\eps$ small and then choosing $\ovr$ and $\ep$ sufficiently small accordingly, we obtain the following theorem

\begin{theorem}\label{thm5.2}
Assume (A1)-(A4), (B1) -- (B5), (C1), and (C2). If there exists $\eta<0$ with $a_1<k\eta<\eta<a_2$, then
for each triple $(\xi_s,\xi_c,\xi_y)\in X_1^s\times X_1^c\times Y_1$, \eqref{eq5.4}
has a unique fixed point $(\wt{x},\wt{y})\in B_{\eta_0}^{+}(\infty)$ such that
\begin{enumerate}
\item[i)] If $\xi_s=0$, $(\wt{x},\wt{y})\equiv(0,0)$.
\item[ii)] $(D_{\xi_s}^j\wt{x},D_{\xi_s}^j\wt{y})\in
C^0\left(X_1^s\times X_1^c\times
Y_1,B_{j\eta_0}^{+}(\infty)\right)$, where $j=1,\cdots,k$.
\end{enumerate}
\end{theorem}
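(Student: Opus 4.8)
The plan is to treat $\mathscr{G}_s$ exactly as a version of $\mathscr{T}_s$ from Section~\ref{S:InMa} but carrying the extra frozen parameter $\xi_{cy}$, and to recycle the contraction and smoothness machinery already developed there. First I would verify that, for any fixed $(\xi_c,\xi_y)$, the right-hand side of \eqref{eq5.4} maps $B_{\eta}^{+}(\infty)$ into itself and is a contraction in $|\cdot|_{\eta,\eps}^{+}$. This follows because $\wt F,\wt G$ vanish at $(\wt x,\wt y)=(0,0)$ with $|D\wt F|_{C^0},|D\wt G|_{C^0}\le\ovr$ (these bounds are inherited verbatim from $F,G$ and are uniform in $\xi_{cy}$), and because the oscillatory $y$-component is handled, as in the proof of Lemma~\ref{lemma4.2}, by integrating the $g_x\wt x$ term by parts, which produces the crucial extra factor of $\ep$ and keeps everything away from the spurious $\frac1\ep$. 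Then one picks $\eps$ small, and afterwards $\ovr$ and $\ep_0$ small so that \eqref{eq4.7}-type smallness holds; the contraction constant $1-\sigma'$ is then uniform in $\xi_{cy}$, $t_0$, and $\ep<\ep_0$. The unique fixed point is denoted $(\wt x,\wt y)$, and assertion i) is immediate: if $\xi_s=0$ then $(0,0)$ solves \eqref{eq5.4} (since $U(t,\ep)\cdot 0=0$ and $\wt F(0,0,\cdot)=\wt G(0,0,\cdot)=0$), and by uniqueness it is the fixed point.

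For the $C^k$ statement ii), I would use the uniform-contraction-with-parameters theorem. The map $(\wt x,\wt y,\xi_s,\xi_{cy})\mapsto\mathscr{G}_s(\xi_s,\xi_{cy})(\wt x,\wt y)$ is $C^k$ jointly, with the crucial observation — already exploited in Section~\ref{S:InMa} for $\CM_\ep^{cu}$ and again for the fibers — that when one differentiates in $\xi_s$ the non-small linear terms $f_y\wt y$ and $g_x\wt x$ reappear as $f_y D_{\xi_s}^j\wt y$ and $g_x D_{\xi_s}^j\wt x$, i.e.\ linearly in the derivatives, so the linearized operators $D\mathscr{G}_s$, $D^2\mathscr{G}_s,\dots$ are again contractions of the same type (with constant controlled by a $\sigma'$ of the same form as $\sigma$ in \eqref{eq4.7}) and no time derivative is ever forced on an unbounded operator. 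Hence the standard fiber-contraction argument of \cite{CLL91} applies in the scale of spaces $B_{j\eta}^{+}(\infty)$, $j=1,\dots,k$, exactly because $a_1<k\eta<\eta<a_2$ guarantees all the relevant weighted integrals converge; this gives $(D_{\xi_s}^j\wt x,D_{\xi_s}^j\wt y)\in C^0(X_1^s\times X_1^c\times Y_1,B_{j\eta}^{+}(\infty))$. Continuity in $(\xi_c,\xi_y)$ and $t_0$ is obtained as in Proposition~\ref{thm4.14}: split the Duffing-type integrals at a large negative time, use the exponential weight past it and equicontinuity (B3), and absorb the small constant via the contraction, being careful to measure things in the weaker weighted norm when necessary.

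The main obstacle — more a matter of bookkeeping than of a genuinely new idea — is keeping the $\ep$-uniformity under control in two places simultaneously: the $y$-equation, where every estimate must be routed through an integration by parts in the $G+g_x\wt x$ term (as in Lemma~\ref{lemma4.2} and Lemma~\ref{L:FTL}) so that the contraction constant and the norms of the fibers do not blow up as $\ep\to0$; and the parameter dependence, where one has to check that the $C^k$-norm bounds coming from the fiber contraction are independent of $(\xi_{cy},t_0,\ep)$, which ultimately rests on $\ovr=\ovr(r,\ep)\to0$ as $r,\ep_0\to0$ (guaranteed by (A3)) and on the fact that $U(t,\ep)$, while $\ep$-dependent, is uniformly bounded on $\R^{+}$ by (B5) and Remark~\ref{rem4.1}. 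Once these uniformities are in place the theorem is just the parametrized Lyapunov--Perron construction specialized to $\mathscr{G}_s$, so I would state it briefly and refer to Section~\ref{S:InMa} and \cite{CLL91} for the parts that are literally identical.
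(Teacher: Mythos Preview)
Your proposal is correct and follows essentially the same approach as the paper: recognize $\mathscr{G}_s$ as $\mathscr{T}_s$ with the frozen parameter $\xi_{cy}$, use the contraction estimates of Lemma~\ref{lemma4.2} (with the integration-by-parts treatment of $g_x\wt{x}$) to get the fixed point and part i), and then invoke the standard smoothness argument of \cite{CLL91} for part ii), noting that the non-small linear terms $f_y,g_x$ enter only linearly in the derivative equations so the same contraction constant governs $D_{\xi_s}^j$. The paper phrases the last point as the linear terms ``disappearing'' in the remainder $z(\xi+\xi')-z(\xi)-D\mathscr{G}_s(z)(z(\xi+\xi')-z(\xi))$, which is the complementary way of saying what you wrote; otherwise your outline matches the paper's treatment almost verbatim.
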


Let $(\wt{x}, \wt{y})$ be the fixed point of \eqref{eq5.4} corresponding to $\xi_s$ and $\xi_{cy}$, we define
\begin{equation}\label{eq5.5} \begin{aligned}
\sigma_{cu}(\xi_s,\xi_{cy},t_0)=\xi+\left(\wt{x}(0),\wt{y}(0)\right) \quad \;
\mathcal{W}_\ep^{s}(\xi_{cy},t_0)=\Big\{&\sigma_{cu}(\xi_s,\xi_{cy},t_0)\Big| \xi_s\in
X_1^s\Big\}
\end{aligned} \end{equation}
where $\xi$ is given in \eqref{eq5.1}. Usually $\mathcal{W}_\ep^{s}(\xi_{cy},t_0)$ is called the stable fiber
based at $\xi$.

\begin{rem}\label{rem5.3}
Clearly, $(\wt{x}+x,\wt{y}+y)(0) \in \mathcal{M}_{\ep}^{cs}(t_0)$, where $(x,y)$ is the solution of \eqref{eq5.2}
with parameters $(\xi_c,\xi_y,t_0)$ and thus $\mathcal{W}_\ep^{s}(\xi_{cy},t_0) \subset \mathcal{M}_{\ep}^{cs}(t)$
\end{rem}

To study the smooth of $\mathcal{W}_\ep^{s}(\xi_{cy},t_0)$ with respect to $\xi_{cy}$, for a positive
integer $k\geq2$, define
\begin{align}\nonumber
\Lambda_k=\Big\{(\eta,\eta')\in\mathbb{R}^2\big|&a_1<k\eta<\eta<\min\{0,a_2\},
a_1'<j\eta'<a_2',\\\label{eq5.8}&a_1<\eta+j\eta'<a_2,\
j=1,2,\cdots,k-1,\Big\}.
\end{align}

\begin{theorem}\label{thm5.3}
For $k\geq2$, assume (A1)-(A4), (B1)--(B5), (C1)--(C2), and $\Lambda_k$ is nonempty. For any compact subset
$\Sigma$ of $\Lambda_k$, by fixing $\eps$ small and then choosing $\ovr$ and $\ep$ sufficiently small
accordingly, then for any $(\eta,\eta')\in\Sigma$, \eqref{eq5.4} has a unique fixed point $(\wt{x},\wt{y})\in
B_{\eta}^{+}(\infty)$ such that
\begin{enumerate}
\item[i)] $(D_{\xi_{cy}}^j\wt{x},D_{\xi_{cy}}^j\wt{y})\in
C^0\left(X_1^s\times X_1^c\times
Y_1,B_{\eta+j\eta'}^{+}(\infty)\right),$
\item[ii)] $(D_{\xi_s}^{m-j}D_{\xi_{cy}}^j\wt{x},D_{\xi_s}^{m-j}D_{\xi_{cy}}^j\wt{y})
\in C^0\left(X_1^s\times X_1^c\times
Y_1,B_{(m-j)\eta+j\eta'}^{+}(\infty)\right),$ \end{enumerate} where
$m=2,\cdots,k,\ j=1,\cdots,m-1$.
\end{theorem}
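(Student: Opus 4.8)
The plan is to run the fiber-contraction argument from regular-perturbation theory (as in \cite{CLL91}) on the fixed-point equation $(\wt x,\wt y)=\mathscr{G}_s(\xi_s,\xi_{cy},t_0,\ep)(\wt x,\wt y)$ of \eqref{eq5.4}, carefully tracking the exponential weight of every term that appears. Smoothness in $\xi_s$ is already contained in Theorem \ref{thm5.2}, the $j$-th derivative lying in $B_{j\eta}^{+}(\infty)$; the new feature is that $\xi_{cy}$ enters $\mathscr{G}_s$ only through the base solution $(x(\xi_{cy})(\cdot),y(\xi_{cy})(\cdot))$ of \eqref{eq5.2} on $\CM_\ep^c(t_0)$, sitting inside the differences $\wt F(\wt x,\wt y,\xi_{cy})$ and $\wt G(\wt x,\wt y,\xi_{cy})$. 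The construction of the center integral manifold (Theorem \ref{thm4.7}) by the Lyapunov--Perron method also shows that $(x(\xi_{cy}),y(\xi_{cy}))$ is $C^k$ in $\xi_{cy}$ with $D_{\xi_{cy}}^{q}(x(\xi_{cy}),y(\xi_{cy}))\in C_{q\eta'}^{+}(X_1)\times C_{q\eta'}^{+}(Y_1)$ whenever $a_1'<q\eta'<a_2'$; this is where the constraints $a_1'<j\eta'<a_2'$ in \eqref{eq5.8} enter. As throughout this section, no time derivative of $(\wt x,\wt y)$ is ever taken, so the smoothness machinery carries over uniformly in $\ep$ exactly as for the center-stable manifold.

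First I would treat the single $\xi_{cy}$-derivative. Differentiating \eqref{eq5.4} and writing $D\mathscr{G}_s$ for $D_{(\wt x,\wt y)}\mathscr{G}_s$, one obtains formally $D_{\xi_{cy}}(\wt x,\wt y)=(I-D\mathscr{G}_s)^{-1}\,\partial_{\xi_{cy}}\mathscr{G}_s$. The point is the weight of the inhomogeneity: since $(F,G)$ are $C^2$ and, by (B2), $\wt F(0,0,\xi_{cy})=\wt G(0,0,\xi_{cy})=0$, differentiating $\wt F$ in $\xi_{cy}$ with $(\wt x,\wt y)$ frozen produces factors of the form $\big(D_{x}F(\text{base}+\text{pert},\cdot)-D_{x}F(\text{base},\cdot)\big)D_{\xi_{cy}}(x(\xi_{cy}),y(\xi_{cy}))$, which are $O(|(\wt x,\wt y)|)=O(e^{\eta t})$ times $O(e^{\eta't})$, hence $O(e^{(\eta+\eta')t})$, while the linear terms $f_y\wt y$, $g_x\wt x$ contribute nothing to $\partial_{\xi_{cy}}\mathscr{G}_s$. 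Thus $\partial_{\xi_{cy}}\mathscr{G}_s$ is valued in $B_{\eta+\eta'}^{+}(\infty)$ provided the Lyapunov--Perron integrals converge, i.e. $a_1<\eta+\eta'<a_2$. Since $\mathscr{G}_s$ has the same structure as $\mathscr{T}_s$, the estimates of Section \ref{S:InMa} show that $D\mathscr{G}_s$ is a uniform contraction on $B_{\eta+\eta'}^{+}(\infty)$ once $\eps,\ovr,\ep$ are small, so $(I-D\mathscr{G}_s)^{-1}$ is bounded there and $D_{\xi_{cy}}(\wt x,\wt y)\in B_{\eta+\eta'}^{+}(\infty)$. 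Applying the fiber contraction theorem to the map $\big((\wt x,\wt y),v\big)\mapsto\big(\mathscr{G}_s(\wt x,\wt y),\,D\mathscr{G}_s\,v+\partial_{\xi_{cy}}\mathscr{G}_s\big)$ on $B_{\eta}^{+}(\infty)\times B_{\eta+\eta'}^{+}(\infty)$ then legitimizes the computation and yields continuity of $D_{\xi_{cy}}(\wt x,\wt y)$ into $B_{\eta+\eta'}^{+}(\infty)$.

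Next I would induct on the total order $m$. Applying $D_{\xi_s}^{m-j}D_{\xi_{cy}}^{j}$ to \eqref{eq5.4} and expanding by the Fa\`a di Bruno formula, the right-hand side is $D\mathscr{G}_s$ applied to the top-order derivative $D_{\xi_s}^{m-j}D_{\xi_{cy}}^{j}(\wt x,\wt y)$ plus finitely many lower-order terms, each a Lyapunov--Perron integral of a multilinear expression in already-constructed derivatives $D_{\xi_s}^{p}D_{\xi_{cy}}^{q}(\wt x,\wt y)$ (with $p+q<m$, $q\le j$, carried in $B_{p\eta+q\eta'}^{+}(\infty)$ by the induction hypothesis and Theorem \ref{thm5.2}) and derivatives $D_{\xi_{cy}}^{q'}(x(\xi_{cy}),y(\xi_{cy}))$ of the base solution, composed with derivatives of $(F,G)$; the difference structure of $\wt F,\wt G$ guarantees at least $m-j$ of the factors carry an $e^{\eta t}$-decay. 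Counting exponents, every such term lands in $C_{(m-j)\eta+j\eta'}$, and solving by $(I-D\mathscr{G}_s)^{-1}$ --- now a bounded operator on $B_{(m-j)\eta+j\eta'}^{+}(\infty)$, which is exactly where one needs $a_1<(m-j)\eta+j\eta'<a_2$ for $1\le j\le m-1$, i.e. the remaining constraints of $\Lambda_k$ (the cases $m-j=1$ being binding because $\eta<0$) --- places the top-order derivative in the stated space. A fiber contraction theorem at level $m$ gives its continuity, closing the induction; item i) is the special case obtained by never differentiating in $\xi_s$. Fixing $\eps$ small first and then $\ovr,\ep$ small ensures the contraction constant of $D\mathscr{G}_s$ is below $1$ simultaneously for the finitely many weights $(m-j)\eta+j\eta'$ arising as $(\eta,\eta')$ ranges over the compact set $\Sigma$, which forces that order of the quantifiers.

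The hard part will be the exponent bookkeeping in the inductive step: because $\eta'$ may be positive, a bare derivative $D_{\xi_{cy}}^{q'}(x(\xi_{cy}),y(\xi_{cy}))$ of the base solution can grow forward in time, so one must check that in every term produced by Fa\`a di Bruno it is genuinely multiplied by enough factors carrying $e^{\eta t}$ with $\eta<0$ for the product to decay at the claimed rate $(m-j)\eta+j\eta'$. This pairing is precisely what the vanishing $\wt F(0,0,\xi_{cy})=0$ from (B2) and the $C^{\ge2}$ regularity of $(F,G)$ provide, and it is the reason the admissible set $\Lambda_k$ in \eqref{eq5.8} has exactly its stated shape. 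Everything else is routine given the earlier sections: $\mathscr{G}_s$ is structurally identical to $\mathscr{T}_s$, so all contraction and weighted estimates of Section \ref{S:InMa} transfer verbatim with $\xi_{cy}$ a frozen parameter, and since no differentiation in $t$ ever occurs the whole argument is uniform in $\ep$.
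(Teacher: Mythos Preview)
Your proposal is correct and follows essentially the same approach the paper indicates: the paper does not give a self-contained proof of Theorem \ref{thm5.3} but explicitly defers to \cite{CLL91} (and \cite{Lu10} for details), saying the proof ``follows from the same procedure which is based on the definition of the Frechet derivatives,'' and your outline is precisely that procedure with the exponent bookkeeping spelled out. The only cosmetic difference is that you phrase the smoothness step via the fiber contraction theorem, whereas the paper's earlier smoothness proofs (e.g.\ after Lemma \ref{lemma4.2}) estimate the remainder $z(\xi+\xi')-z(\xi)-D_\xi z(\xi)\xi'$ directly through $(I-D_z\mathscr{T})^{-1}$; these are standard, interchangeable formulations of the same argument.
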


Our spectral gap assumption on $\Lambda_k$ is essentially the same as in \cite{CLL91} and the proof of the
theorem again follows from the same procedure which is based on the definition of the Frechet derivatives.
One may notice that $\sigma_{cu}$ was only proved to belong to $C_{\xi_s}^{k-1-j} C_{\xi_c}^j$, while we have
it in $C_{\xi_s}^{k-j} C_{\xi_{cy}}^j$ if $j<k$. In fact, it is easy to verify that the same proof works to
yields our above slightly stronger version. For details see \cite{Lu10}.

Finally, a natural issue is the asymptotic estimates of the stable fibers as $\ep \to0$. As in Section
\ref{S:InMa}, we use equation \eqref{E:x_*} as the approximation of \eqref{eq4.5} in the slow direction and
keep the same notations as in \eqref{E:G_*}. Given any $\xi_c\in X_1^c$, let $\xi_*=\xi_c +(\Psi_s^*+\Psi_u^*)
(\xi_c)\in\mathcal{M}_*^c$, where we recall that $\CM_*^c$ is the center manifold of \eqref{E:x_*},
$\Psi_s^*,\Psi_u^*$ are independent of $t_0$, and $x_*(\xi_c)(t)$ be the solution on $\mathcal{M}_{*}^c$ such
that $x_*(\xi_c)(0)=\xi_*$. Let $\wt{x}_*(t)$ satisfy
\[
\begin{aligned}
\wt{x}_*(t)=e^{t\af}\xi_s+\big(\int_0^te^{(t-\tau)\af}
P_{s}&+\int_{+\infty}^te^{(t-\tau)\af}
P_{cu}\big) \wt{F}_*^\ep (\wt{x}_*(\tau), \xi_c, \tau) d\tau.
\end{aligned}
\]
where
\[
\wt{F}_*^\ep (\wt{x}, \xi_c, t) \triangleq F_*^\ep(\wt{x} +x_*(\xi_c),t+t_0)-F_*^\ep(x_*(\xi_c),t+t_0).
\]
Therefore, $(\wt{x}_*+x_*(\xi_c))(t)$ is the solution of the unperturbed fibre starting
at the based point $\xi_*$ with height $\xi_s$ such that
\[
(\wt{x}_*+x_*)(0)=\xi_s+(I-P_u)\xi_*+h_u^*(\xi_s + (I-P_u)\xi_*,\ep) \triangleq \sigma_{cu}^*(\xi_s,\xi_c, \ep)
\]
where $h_u^*: X_1^c \times X_1^s \times \R \to X_1^u$ is the defining function of the center-stable manifold
of \eqref{E:x_*}.

\begin{theorem}\label{thm5.5}
For $k=2$, assume (A1)-(A5), (B1)-(B5), (C1)-(C2), and $\Lambda_2$
in \eqref{eq5.8} is nonempty. For $\xi_y=0$ and the above given
$\xi_c$, we have
$$
\big|\sigma_{cu}(\xi_s,\xi_c)-\sigma_{cu}^*(\xi_s,\xi_c)\big|_{X_1\times Y_1}\leq C'\ep,
$$
where $C'$ depends on $K,a_1,a_1',a_2',\eta,\eta',\ovr,\eps,|\xi_c|_{X_1}$, and $|\xi_s|_{X_1}$.
Moreover, if $k\ge 3$ and $|\p_t G|_{C_{t, \ep}^0 C_{x, y}^2 (X_1\times Y_1 \times \R^2, Y)} \le C_0$, then
\[\begin{split}
&\big|\sigma_{cu}(\xi_s,\xi_c)-\sigma_{cu}^*(\xi_s,\xi_c)\big|_{X_1\times Y_1}\\
\leq& C'\ep \big(|g_x|_{L(X, Y)}
+ |D_x G_*^\ep|_{C^0 (X_1 \times \R, L(X, Y))} + |\p_t G_*^\ep|_{C_t^0 C_x^1(X_1 \times \R, Y)}\big)
\end{split}\]
\end{theorem}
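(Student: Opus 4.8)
The plan is to follow exactly the scheme used in the proof of Proposition \ref{P:appro}, replacing the fixed-point operator $\wte$ there by the operator $\mathscr{G}_s$ defining the stable fiber and its unperturbed analogue $\wt{F}_*^\ep$. The key point is that the stable-fiber operator $\mathscr{G}_s(\xi_s, \xi_{cy}, t_0, \ep)$ with $\xi_y = 0$ has, by construction, the same structure as $\mathscr{T}_s$ with the extra base-point parameter $\xi_{cy}$, and its linearization (in $(\wt x, \wt y)$) is a contraction of rate $1-\sigma'$ under both the $\|\cdot\|_\eta^1$ and $\|\cdot\|_\eta$ norms, just as in \eqref{eq4.43}. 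First I would write the telescoping estimate
\[
\|(\wt x - \wt x_*, \wt y)\|_\eta^1 \le (1-\sigma')\|(\wt x - \wt x_*, \wt y)\|_\eta^1 + \|\mathscr{G}_s(\wt x_*, 0, \xi_{cy}) - \wte^{\mathrm{fib}}(\wt x_*, \xi_c)\|_\eta^1,
\]
so that the fiber difference is controlled by $\tfrac1{\sigma'}$ times the discrepancy of the two operators evaluated at the unperturbed fiber solution. Here one must also carry along the estimate $|x(\xi_{cy})(t) - x_*(\xi_c)(t)| = O(\ep)$ on the base points coming from Proposition \ref{P:appro} applied to the center manifolds (Remark \ref{R:other}), plus $|y(\xi_{cy})(t)| = O(\ep)$; these feed into bounding $\wt F - F_*^\ep$ and $\wt G$ in the $Y$-component.

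The heart of the matter is the $Y$-component of $\mathscr{G}_s(\wt x_*, 0, \xi_{cy})$, which is exactly the term that vanishes in the singular limit. I would integrate by parts in $\tau$ in the integral
\[
\int_{+\infty}^t e^{(t-\tau)(\frac J\ep + g_y)}\big(\wt G(\wt x_*, 0, \xi_{cy}) + g_x \wt x_*\big)\, d\tau,
\]
pulling out a factor $\ep(J + \ep g_y)^{-1}$ exactly as in \eqref{eq4.61}, picking up a boundary term at $t$ and an integral of $\p_\tau$ of the integrand. Using $\wt G(0,0,\xi_{cy}) = 0$ and $G_*^\ep(x_*, \cdot) = O(|x_*|)$-type bounds (and the $C^0_{t}C^1_x$ norm of $\p_t G_*^\ep$ to supply the decay in $\tau$), this gives the discrepancy $\le C'\ep(|g_x|_{L(X,Y)} + |D_x G_*^\ep|_{C^0} + |\p_t G_*^\ep|_{C_t^0 C_x^1})$, which is the right-hand side of the refined estimate. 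The crude $O(\ep)$ bound of the first (lower-smoothness, $k=2$) assertion follows by the same argument without tracking which norm of $G_*^\ep$ enters. Then $|\sigma_{cu} - \sigma_{cu}^*|_{X_1 \times Y_1}$ is read off at $t=0$ from $\|(\wt x - \wt x_*, \wt y)\|_\eta^1$ together with the base-point estimate and the closeness of $h_u$ to $h_u^*$ (Proposition \ref{P:appro}/Remark \ref{R:other}).

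The main obstacle I anticipate is the bookkeeping of function-space norms in the $\p_\tau$ term produced by the integration by parts: after differentiating one gets $\af \wt x_*$ and $D_x F_*^\ep$ acting on $\af \wt x_* + F_*^\ep$, i.e. unbounded-operator terms, and one must check these stay in $Y$ (not $Y_1$) under (A1)--(A5) and the extra hypothesis $|\p_t G|_{C^0_{t,\ep}C^2_{x,y}} \le C_0$ — this is precisely why the stronger estimate needs $k \ge 3$ and that additional bound, parallel to the role of (A5$'$) and Theorem \ref{thm3.1} in Lemma \ref{L:FTL} and Proposition \ref{P:appro}. A secondary technical point is that the base-point solution $x_*(\xi_c)(t)$ and the perturbed center-manifold solution must be compared not just pointwise but in the weighted norm $\|\cdot\|_{\eta'}$ on $\R^-$ (or the relevant compact subinterval of exponents from $\Lambda_2$), so one should invoke the center-manifold version of Proposition \ref{P:appro} in that weighted sense rather than only the $t=0$ statement; this is routine given Remark \ref{R:other} but needs to be stated carefully to close the estimate uniformly in $t \le 0$.
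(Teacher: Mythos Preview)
Your overall strategy---mimic Proposition~\ref{P:appro} by telescoping through the fixed-point operator and isolating the $Y$-component as the term requiring integration by parts---is sound and is essentially what the paper does. However, there is a real gap in your choice of weighted norm. You run the telescoping in $\|\cdot\|_\eta^1$, but the discrepancy term $\mathscr{G}_s(\wt x_*,0,\xi_{cy})-\wte^{\mathrm{fib}}(\wt x_*,\xi_c)$ is \emph{not} bounded in that norm when $\eta'>0$. The $X$-component of the discrepancy contains $\wt F(\wt x_*,0,\xi_{cy})-\wt F_*^\ep(\wt x_*,\xi_c)$, which by the mean value theorem is controlled by $|D^2F|\cdot|\wt x_*(\tau)|\cdot\big(|x(\xi_{cy})(\tau)-x_*(\xi_c)(\tau)|+|y(\xi_{cy})(\tau)|\big)$. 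The fiber solution $\wt x_*$ decays at rate $\eta$, but the base-point difference on the center manifold is only $O(\ep e^{\eta'\tau})$ (this is exactly the $\|\cdot\|_{\eta'}$-estimate you allude to), so the product grows like $e^{(\eta+\eta')\tau}$. Measured with weight $e^{-\eta t}$ this blows up as $t\to+\infty$. The same issue arises in the $Y$-component after integration by parts: the term $\big(DG(x(\xi_c)+\wt x_*,y(\xi_c))-DG(x(\xi_c),y(\xi_c))\big)(\dot x(\xi_c),\dot y(\xi_c))$ has combined rate $\eta+\eta'$. This is precisely why the condition $a_1<\eta+\eta'<a_2$ appears in $\Lambda_2$: the entire argument must be run in the weight $\eta+\eta'$, not $\eta$. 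The paper makes this explicit by estimating $|\wt y|_{\eta+\eta',\eps,Y_1}$ and $|\wt x-\wt x_*|_{\eta+\eta',1,X_1}$ directly (splitting $\wt y$ into a small-Lipschitz part $I_1$ absorbed into the left-hand side and the IBP part $I_2$), rather than through a single contraction telescoping. Your approach can be repaired by replacing $\|\cdot\|_\eta^1$ with $\|\cdot\|_{\eta+\eta'}^1$ throughout and checking that the contraction rate of (the $\wtt$-analogue of) $\mathscr{G}_s$ persists at that weight, which it does since $\eta+\eta'\in(a_1,a_2)$.

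Two smaller points: the base-point comparison you need is on $\R^+$, not $\R^-$ (the stable fiber lives in forward time), so you must invoke the center-stable version of Proposition~\ref{P:appro}/Remark~\ref{R:other} rather than \eqref{eq4.61} as written; and the $\p_\tau$-term after integration by parts produces not only $\af\wt x_*$ but also $(DG(x(\xi_c)+\wt x_*,\cdot)-DG(x(\xi_c),\cdot))(\dot x(\xi_c),\dot y(\xi_c))$, where $\dot y(\xi_c)$ must be controlled via the $y$-equation and \eqref{eq4.61} (this is \eqref{E:doty} in the paper)---you do not mention this term, and it is where the refined $k\ge3$ estimate requires the extra Taylor expansion in $y(\xi_c)$ to pull out the $G_*^\ep$ factors.
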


We will write $\xi_c$ instead of $\xi_{cy}$ as we only consider the case $\xi_y=0$ in this theorem.

\begin{proof}
By \eqref{eq5.4} and the definition of $\wt{G}$
\begin{equation}\label{eq5.13}
\begin{aligned}
\wt{y}(t)=& I_1(t) + I_2(t) \triangleq \int_{+\infty}^t e^{(t-\tau)(\frac{J}{\ep}+g_y)}
\Big(\wt{G}(\wt{x},\wt{y},\xi_c,\ep)-\wt{G}(\wt{x},0,\xi_c,\ep)\Big) \Big)d\tau\\
&+\int_{+\infty}^te^{(t-\tau)(\frac{J}{\ep}+g_y)}\Big(\wt{G}(\wt{x},0,\xi_c,\ep)+g_x\wt{x}
\Big)d\tau.
\end{aligned}
\end{equation}
Since $|D_{(\wt{x},\wt{y})}\wt{G}|_{C^0}=|D_{(x,y)}G|_{C^0}\leq\ovr$
and $a_1<\eta<\eta+\eta'<a_2$,
\begin{equation}\label{eq5.14}
\begin{aligned}
\sup_{t\geq0}\frac{1}{\eps}e^{-(\eta+\eta')t}|I_1 (t) |_{Y_1} \leq
\frac{K\ovr}{\eta+\eta'-a_1}|\wt{y}|_{\eta+\eta',\eps,Y_1}.
\end{aligned}
\end{equation}
To estimate $I_2(t)$, we integrate by parts to obtain
\[\begin{aligned}
I_2(t)& =\ep (J +\ep g_y)^{-1}\Big( -\big(g_x \wt{x}(t) + \wt{G}(\wt{x}(t),0,\xi_c,t, \ep)\big)
+\int_{+\infty}^te^{(t-\tau)(\frac{J}{\ep}+g_y)}\Big[\\
&\big(g_x + D_xG(x(\xi_c)+\wt{x},y(\xi_c),\tau+t_0,\ep)\big)\dot{\wt{x}} + \partial_{t} \wt{G} (\wt{x}, 0,
\xi_c,\tau,\ep)  \\
&+\big(DG(x(\xi_c)+\wt{x}, y(\xi_c),\tau+t_0,\ep) - DG(x(\xi_c), y(\xi_c), \tau +t_0, \ep)\big)
(\dot{x}(\xi_c), \dot y(\xi_c)) \Big]d\tau \Big),
\end{aligned}\]
where $\wt{x}$, $(x(\xi_c)(\tau),y(\xi_c)(\tau))$, and their time derivatives in the above integral are all
evaluated at $\tau$. Using the differential equations satisfied by $\dot x(\xi_c)$ and $\dot {\wt{x}}$, it is
easy to show
\begin{equation} \label{E:dotx}
|\dot x(\xi_c)|_{\eta',1,X}^+ + |\dot {\wt{x}}|_{\eta + \eta',1,X}^+ \le C'.
\end{equation}
Since $\xi_y=0$, from \eqref{eq4.5} and use \eqref{eq4.61} to estimate its right side,
we obtain for any $\tau$,
\begin{equation} \label{E:doty}\begin{aligned}
|\dot{y}(\xi_c)|_{\eta', 1,Y} \leq &\frac {C'}\ep |y(\xi_c)|_{\eta',1,X_1}^+ + \big(|g|_{L(X, Y)}
+ |D_x G_*^\ep|_{C^0 (X_1 \times \R, Y)}\big) |x(\xi_c)|_{\eta',1,X_1}^+ \\
\leq& C' (|g_x|_{L(X, Y)} + |D_x G_*^\ep|_{C^0(X_1 \times \R, L(X, Y))}
+ |\p_t G_*^\ep|_{C_t^0C_x^1(X_1 \times \R, Y)}),
\end{aligned}\end{equation}
where $C'$ depends on $K,a_1',\eta',\ovr,\eps,|\xi_c|_{X_1}$. Using \eqref{E:dotx}, the bound on $D\wt{G}$,
the estimates on $|\wt{x}|_{\eta, 1, X_1}$ and $|\wt{x}|_{\eta + \eta', 1, X_1}$ from Theorem \ref{thm5.2},
assumptions for $k=2$, it is straight forward to see $|I_2|_{\eta+\eta', \eps, Y_1} \le C'\ep$ and along
with \eqref{eq5.14}, we have  $|\wt{y}|_{\eta+\eta',\eps,Y_1} \le C' \ep$.

In order to obtain a more careful estimate in terms of $g_x$ and $G_*^\ep$ when $k\ge 3$ and assuming the
extra assumption on $\p_t G$, skipping $t$ and $\ep$, we rewrite
\[
D_xG(x(\xi_c)+\wt{x},y(\xi_c))= D_x G_*^\ep (x(\xi_c)+\wt{x}) + \int_0^1 D_{xy} G (x(\xi_c)+\wt{x},
s y(\xi_c)) ds y(\xi_c).
\]
Similarly, rewrite $\wt{G} (\wt{x}, 0, \xi_c)$ (as well as $\p_t \wt{G}$ and $DG(x(\xi_c) + \wt{x}, \ldots)
- DG(x(\xi_c), \ldots)$)
\[\begin{split}
&\wt{G} (\wt{x}, 0, \xi_c) = G(x(\xi_c) + \wt{x}, y(\xi_c)) - G(x(\xi_c), y(\xi_c) = \int_0^1 D_x G_*^\ep
(x + s\wt{x})ds \wt{x} \\
&\qquad \qquad \qquad \qquad + \int_0^1 \int_0^1 D^2 G\big(x(\xi_c) + s_1 \wt{x}, s_2 y(\xi_c)
\big) ds_1 ds_2 \Big( \big(\wt{x}, 0\big), \big(0, y(\xi_c) \big)\Big).
\end{split}\]
Therefore, in the estimate of $I_2$, each term either directly has a factor $g_x$ or $G_*^\ep$ or indirectly
from $y(\xi_c)$ and \eqref{eq4.61} which implies
\[
|\wt{y}|_{\eta+\eta', \eps, Y_1} \le C'\ep (|g_x|_{L(X, Y)} + |D_x G_*^\ep|_{C^0(X_1 \times \R, L(X, Y))}
+ |\p_t G_*^\ep|_{C_t^0C_x^1(X_1 \times \R, Y)}).
\]

Using integral equations of $\wt{x}(t)$ and $\wt{x}_*(t)$, we have
\begin{eqnarray*}
&&\wt{x}(t)-\wt{x}_*(t) = \Big(\int_{0}^{t} P_{s} +\int_{+\infty}^t P_{cu}\Big) e^{(t-\tau)\af}
\Big(\wt{F}(\wt{x},\wt{y},\xi_c,\ep)-\wt{F}_*^\ep (\wt{x}_*,\xi_c)+
f_y\wt{y}\Big)d\tau.
\end{eqnarray*}
We can write, skipping $\tau+t_0$ and $\ep$,
\[\begin{split}
&\wt{F}(\wt{x},\wt{y},\xi_c,\ep)-\wt{F}_*^\ep (\wt{x}_*,\xi_c) \\
=& F(x(\xi_c) + \wt{x}, y(\xi_c) + \wt{y}) - F(x(\xi_c), y(\xi_c)) - F(x_*(\xi_c) + \wt{x}_*, 0)
+ F(x_*(\xi_c, 0))\\
=& \int_0^1 D F\big(x(\xi_c) + s \wt{x} + (1-s) \wt{x}_*, y(\xi_c) + s \wt{y}\big) ds \big( \wt{x}-\wt{x}_*,
\wt{y} \big) + \int_0^1 \int_0^1 \\
&D^2F \big(s_1 x(\xi_c) + (1-s_1) x_*(\xi_c) + s_2 \wt{x}_*, s_1 y(\xi_c)\big) ds_1 ds_2
\Big( \big(x(\xi_c) - x_*(\xi_c), y(\xi_c)\big), \big(\wt{x}_*, 0\big)\Big)
\end{split}\]
Combining the estimates on $|x(\xi_c) - x_*(\xi_c)|_{\eta',1,X_1}$ and $|y(\xi_c)|_{\eta',1,Y_1}$ from
\eqref{eq4.61}, $|\wt{x}_*|_{\eta, 1, X_1}$ from the standard theory (like in Theorem \ref{thm5.2}),  $|\wt{y}|_{\eta+\eta',\eps,Y_1}$ from the above, we obtain the desired estimates on
$|\wt{x} -\wt{x}_*|_{\eta+\eta',1,X_1}$ and thus complete the proof.
\end{proof}

\section{Normally elliptic singular perturbations to homoclinic solutions} \label{S:homo}
In this section, we will discuss the persistence of a homoclinic solution
under normally elliptic singular perturbations. We assume
(A1)-(A5) for $k=2$, (A5$'$), (B1)-(B5) in Section 4 and (C1)-(C2)
after Theorem \ref{thm4.5}. In this whole section, we assume
\begin{enumerate}
\item[(D1)] $A$ generates a strongly continuous group on $X$ and
$X_1^u$ has finite dimension.
\item[(D2)] There exist $\eta$ and $\eta'$ such that
\[\begin{aligned}
&a_1<2\eta< \eta<\min\{0,a_2\} \ , \ \max\{0,a_1'\}<\eta'<2\eta'<a_2',\\
&a_1<\eta+\eta'<a_2 \ , \ a_1+\eta'<0,
\end{aligned}\]
where $a_1,a_2,a_1',a_2$ are defined in (B5) and (C2).
\item[(D3)] \eqref{eq3.2} has a homoclinic orbit
$x_h(t)$ such that $|Ax_h(t)|_{X_1}$ is bounded and
\[
\sup_{t\geq0}e^{-a_1t}|x_h(t)|_{X_1}<\infty\ , \
\sup_{t\leq0}e^{-a_2' t}|x_h(t)|_{X_1}<\infty.
\]
\item[(D4)] There exists a $C^2$ invariant quantity $H:X_1\rightarrow\mathbb{R}$ with
$DH \in C^1 (X_1, L(X, \mathbb{R}))$ such that
\[
H(0)=0 \ , \ DH(0)=0.
\]
\item[(D5)] At $x_0=x_h(0)$,
\[
DH(x_0)\neq 0\ , \ \dim (T_{x_0}\mathcal{M}_0^{u}\bigcap
T_{x_0}\mathcal{M}_0^{cs})=1,
\]
where $\CM_0^u$ and $\CM_0^{cs}$ are the unperturbed unstable and center-stable manifolds
of $0$.
\end{enumerate}
Our goal is to study if \eqref{eq3.1} has a homoclinic solution to $0$ when $0<\ep\ll1$ and how this problem
are handled under normally elliptic singular perturbations. We will consider both the weakly dissipative and
the conservative cases via a geometric approach based on invariant manifolds. For the former, a more analytic
method based on the Lyapunov-Schmidt reduction may also work \cite{CH82, SZ00} to give the persistent
homoclinic solution, but the geometric method provide more information such as the transversality of the
intersection of the stable and unstable manifolds. For the latter, we are not aware of such an analytic method
even in similar regular perturbations, so we follow the geometric approach as in \cite{SZ03}.

Since $f$ is independent of $t$ when $\ep=0$, in this section, we will write
\[
f_0 (x) = f(x, 0, t, 0) \qquad g_0(x) = g(x, 0, t, 0).
\]
We use $B_{\rho}(p,S)$ to denote the ball in a space $S$ of radius $\rho$ centered at $p$ which is often
skipped if $p=0$. We will also keep using $P_X$ and $P_Y$ to denote the projections.

In he following Subsection \ref{SS:coord}, a coordinate system around the unperturbed homoclinic
orbit. Subsection \ref{SS:diss} is devoted to study the persistence of the homoclinic orbit under weakly
dissipative perturbations and Subsection \ref{SS:cons} is to study the conservative and autonomous case,
i.e. $f,g$ are assumed to be independent of $t$ for all $\ep\geq0$ in Subsection \ref{SS:cons}. The example
of the elastic pendulum will be revisited.

\subsection{Coordinates around the unperturbed homoclinic orbit} \label{SS:coord}

Locally near $0$, we cut off the nonlinearity as in section 4 to
obtain $h_{cs},h_u$ and thus the local integral manifolds. We also
use $\mathcal{M}_{\beta}^{\alpha}(t_0)$ to denote the global
integral manifolds corresponding to the time $t_0$ extended by the
flow from the local ones of systems \eqref{eq3.1} and \eqref{eq3.2},
where $\alpha=cs,u,cu,s,c \ ,\ \beta=0,\ep$. The assumption $H(0)=0$
and the invariance of the fibers and $H$ imply
$H|_{\mathcal{M}_{0}^{s}}\equiv 0$ and thus $T_{x}\mathcal{M}_0^{s}
\subset\ker(DH(x))$ for any $x \in \CM_0^s$. Moreover, by assumption
(D3), $\mathcal{M}_{0}^{cs}$ can be foliated into the disjoint union
of $C^2$ invariant stable fibres which are $C^1$ with respect to the
based point. Therefore, there exists a nonlinear projection $f^s\in
C^1 (\mathcal{M}_{0}^{cs}, \mathcal{M}_{0}^{c})$, which maps points
on each fiber to their based point, such that
\[
f^s|_{\mathcal{M}_{0}^{s}}=(0,0)\ , \ f^s|_{\mathcal{M}_{0}^{c}}=I.
\]
The fiber invariance implies $H=H \circ f^s$ on $\mathcal{M}_{0}^{cs}$. So for any $x \in \CM_0^s$ and
$\delta x\in T_{x}\mathcal{M}_{0}^{cs}$, using the assumption $DH(0)=0$, we obtain
\[
DH(x)\delta x=DH(f^s(x))Df^s(x)\delta x =DH(0)Df^s(x)\delta x=0,
\]
which implies
\begin{equation} \label{E:DH1}
T_{x}\mathcal{M}_{0}^{cs}\subset\ker(DH(x)), \qquad \forall x \in \CM_0^s.
\end{equation}
Similar properties also hold for the unstable and center-unstable manifolds.

In the enlarged phase space, we trivially extend the domain of $H$ from $X_1$ to
$X_1\times Y_1$. Clearly,
\begin{equation}\label{eq6.1}
H(0,0)=0 \ , \ DH(0,0)=0.
\end{equation}
To study the perturbation of the homoclinic solution, we need to take a cross section. Let
\[
v=Ax_0+f_0(x_0).
\]
Since $v\in X_1\subset X$, there exists a hyperplane $\Sigma'\subset X$ that is transverse to $v$. Let
$\Sigma=(\Sigma'\bigcap X_1)\times Y_1$, by using $v\in X_1$, one
can prove $v$ and $\Sigma$ are transverse in $X_1\times Y_1$. Let
$Q_v,Q_v'$ be the projections from $X_1\times Y_1$ and $X\times Y$
onto $\mathbb{R}v$ with kernel $\Sigma$ and $\Sigma'\times Y$,
respectively. We will identify the range of $Q_v$ and $Q_v'$, i.e.
$\mathbb{R}v$, with $\mathbb{R}$. Let
\[
\wt{\mathcal{M}}_0^u=\mathcal{M}_0^u\bigcap(x_0+\Sigma) \ , \
\wt{\mathcal{M}}_0^{cs}=\mathcal{M}_0^{cs}\bigcap(x_0 + \Sigma) \ , \
\overline{X}_1^u=T_{x_0}\wt{\mathcal{M}}_0^u \ , \
\overline{X}_1^{cs}=T_{x_0}\wt{\mathcal{M}}_0^{cs}.
\]
From \eqref{E:DH1}, we have
\[
\overline{X}_1^{cs} \ , \ \overline{X}_1^{u} \ ,\ Y_1\subset
\ker(DH(x_0))\bigcap\Sigma\triangleq\Pi.
\]
We use $\mbox{Codim}_W(Z)$ to represent the codimension of a linear
subspace $Z$ in a Banach space $W$. On the one hand, since $\ker(DH(0))$ is a hyperplane, $v \in \ker(DH(0))$,
and $v \notin \Sigma$, we have $\mbox{Codim}_{\Sigma}(\Pi)=1$. On the other hand, (D4) implies
$\overline{X}_1^{cs}\bigcap \overline{X}_1^{u}=\{0\}$. Moreover, dim-$X_1^u <\infty$ implies  $\mbox{Codim}_{\Sigma}(\overline{X}_1^{cs}\oplus \overline{X}_1^{u}\oplus Y_1)=1$. Therefore,
\[
\Pi=\overline{X}_1^{cs}\oplus \overline{X}_1^{u}\oplus Y_1.
\]
Let $\omega\in\Sigma$ be transversal to $\Pi$ such that
$DH(x_0)\omega=1$ and $Q_{\omega},Q_{cs},Q_{u},Q_y$ be projections
from $\Sigma$ onto $\omega,\overline{X}_1^{cs},\overline{X}_1^{u}$
and $Y_1$. Thus,
\[
\Sigma=\mbox{span}\{\omega\}\oplus\Pi=\mbox{span}\{\omega\}\oplus\overline{X}_1^{cs}\oplus
\overline{X}_1^{u}\oplus Y_1.
\]
We will use coordinates
\begin{equation}\label{eq6.11}\begin{aligned}
&(d,x^{cs},y,x^u)= (Q_{\omega}(p-x_0),Q_{cs}(p-x_0),Q_y(p-x_0),Q_u(p-x_0))\\
=&(DH(x_0)(p-x_0),Q_{cs}(p-x_0),Q_y(p-x_0),Q_u(p-x_0))
\end{aligned}
\end{equation}
to represent any $p\in\Sigma+x_0$. Locally, there exist $\delta>0$ and
\[
\Upsilon_0:B_{\delta}(\overline{X}_1^{cs})\longrightarrow\mathbb{R}\times\overline{X}_1^u \ , \
\Psi_0:B_{\delta}(\overline{X}_1^{u})\longrightarrow\mathbb{R}\times\overline{X}_1^{cs},
\]
such that the graphs of $\Upsilon_0,\Psi_0$ are open subsets of $\wt{\mathcal{M}}_0^{cs},
\wt{\mathcal{M}}_0^u$, respectively. We extend $\Upsilon_0$ to $B_{\delta}(\overline{X}_1^{cs})\times
Y_1$ trivially in $y$.

To study the perturbed problem, let $r$ be the cut-off radius
defined in section 4, there exist $t_1>0,t_2<0$ such that
\begin{equation}\label{eq6.2}
|x_{1,2}|_{X_1}<\frac{r}{2\big(1+|P_{cs}|(1+|Dh_u|_{C^0})+|P_u|(1+|Dh_{cs}|_{C^0})\big)},
\end{equation}
where $x_{1,2}=x_h(t_{1,2})$. Recall that $\Phi (t, t_0, x+y, \ep)$ and $\Phi^0 (t, x)$ denote the flow
maps with the terminal time $t$ of \eqref{eq3.1} and \eqref{eq3.2}, respectively. We first show that for any $t_0\in\mathbb{R}$, $\mathcal{M}_{\ep}^{cs}(t_0)$ does intersect $\Sigma$ near $x_0$ for $\ep\ll1$.

\begin{lemma}\label{lemma6.1}
For any $t_0\in\mathbb{R}$, there exists a unique $t'=t'(t_0, \ep)$ such that
\[\begin{aligned}
&\Phi(t_0,t_0+t',x_1',\ep)\in x_0+\Sigma,\\
&\big|\Phi(t_0,t_0+t',x_1',\ep)-x_0\big|_{X_1\times Y_1} + |t'-t_1| + |\partial_{t_0}t'|\leq C'\ep,
\end{aligned}\]
where
\[
x_1'= x_1'(t', \ep)=P_{cs}x_1+h_u(P_{cs}x_1,t_0+t',\ep) \in \CM_\ep^{cs} \cap X_1
\]
and $C'$ depends on constants in assumptions of this section.
\end{lemma}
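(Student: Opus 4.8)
The plan is to reduce the lemma to a scalar equation for the crossing time $t'$ and solve it by the implicit function theorem, feeding in the finite-time convergence of Section~\ref{S:FT} and the asymptotic estimates for the center-stable manifold from Section~\ref{S:InMa}. Two preliminary facts about $x_1'(t',\ep)=P_{cs}x_1+h_u(P_{cs}x_1,0,t_0+t',\ep)$ are needed. First, by \eqref{eq6.2}, $P_{cs}x_1$ lies in the domain of $h_u$ and $x_1'$ stays well inside the cut-off ball; since $x_h(t)\to 0$ as $t\to+\infty$, the point $x_1=x_h(t_1)$ lies on the unperturbed center-stable manifold, i.e. $x_1=P_{cs}x_1+h_u^0(P_{cs}x_1)$ with $h_u^0$ its defining map, so by Theorem~\ref{T:main}(4) (equivalently the analogue of Proposition~\ref{P:appro} together with Remark~\ref{R:other}) one gets $|x_1'(t',\ep)-x_1|_{X_1}\le C'\ep$, uniformly in $t'$ near $t_1$. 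Second, since $x_1'$ depends on $t_0$ and $t'$ only through $t_0+t'$, the analogue of Proposition~\ref{thm4.26} for $h_u$ (applicable as here $\xi_y=0\le C_1\ep$) gives $\partial_{t'}x_1'=\partial_{t_0}x_1'=O(\ep)$ in $X$, and this derivative vanishes at $\ep=0$ by autonomy.

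Next I would introduce
\[
\Gamma(t',t_0,\ep)=Q_v'\bigl(\Phi(t_0,t_0+t',x_1'(t',\ep),\ep)-x_0\bigr)\in\mathbb{R},
\]
identifying $\mathbb{R}v$ with $\mathbb{R}$; since $Q_v'$ annihilates the $Y$-component, $\Phi(t_0,t_0+t',x_1',\ep)\in x_0+\Sigma$ exactly when $\Gamma=0$. At $\ep=0$ the limit system \eqref{eq3.2} is autonomous and $x_1'|_{\ep=0}=x_1=x_h(t_1)$, so $\Phi(t_0,t_0+t',x_1,0)=x_h(t_1-t')$ and $\Gamma(t',t_0,0)=Q_v'(x_h(t_1-t')-x_0)$, which vanishes at $t'=t_1$ with $\partial_{t'}\Gamma(t_1,t_0,0)=Q_v'(-\dot x_h(0))=Q_v'(-v)=-1\neq 0$. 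The map $\Gamma$ is $C^1$ in $t'$ (the mild solution of \eqref{eq3.1} is $C^1$ in its initial time with derivative in $X\times Y$, on which $Q_v'$ is bounded, because $x_1'\in X_1$), continuous in $(t_0,\ep)$, and $C^1$ in $t_0$ under (A5$'$); moreover $\partial_{t'}\Gamma$ is continuous near $(t_1,t_0,0)$ since the derivative of the flow in the initial time depends continuously on all arguments and the chain-rule correction $D_z\Phi\cdot\partial_{t'}x_1'$ is $O(\ep)$. Hence $\partial_{t'}\Gamma=-1+o(1)$ near the base point, and the implicit function theorem produces a unique $t'=t'(t_0,\ep)$ near $t_1$ with $\Gamma\equiv 0$, continuous in $(t_0,\ep)$, $C^1$ in $t_0$, and with $t'(t_0,0)=t_1$ for every $t_0$ (by autonomy at $\ep=0$).

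The three $O(\ep)$ bounds then follow. For $|t'-t_1|$: from $\Gamma(t',t_0,\ep)=0=\Gamma(t_1,t_0,0)$, the mean value theorem in $t'$ and $|\partial_{t'}\Gamma|\ge \tfrac12$ give $|t'-t_1|\le 2|\Gamma(t_1,t_0,\ep)|$, and $|\Gamma(t_1,t_0,\ep)|\le C'\ep$ by Theorem~\ref{thm3.1} applied on $[t_0,t_0+t_1]$ to the solution of \eqref{eq3.1} through $(x_1',0)$ versus the solution of \eqref{eq3.2} through $x_1$, using $|x_1'-x_1|_{X_1}\le C'\ep$. For the crossing point: Theorem~\ref{thm3.1} applied on $[t_0,t_0+t']$ gives $|\Phi(t_0,t_0+t',x_1',\ep)-x_h(t_1-t')|_{X_1\times Y_1}\le C'\ep$, while (D3) makes $|Ax_h|_{X_1}$, hence $|\dot x_h|_{X_1}$, bounded, so $x_h$ is Lipschitz in time into $X_1$ and $|x_h(t_1-t')-x_0|_{X_1}\le C|t'-t_1|\le C'\ep$; the triangle inequality finishes this bound. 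Finally, differentiating $\Gamma(t'(t_0,\ep),t_0,\ep)\equiv 0$ in $t_0$ yields $\partial_{t_0}t'=-\partial_{t_0}\Gamma/\partial_{t'}\Gamma$, so it suffices to show $\partial_{t_0}\Gamma=O(\ep)$; but $\partial_{t_0}\Gamma=Q_v'\bigl(\partial_{t_0}\Phi+D_z\Phi\cdot\partial_{t_0}x_1'\bigr)$, the second term is $O(\ep)$ by the estimate on $\partial_{t_0}x_1'$, and $\partial_{t_0}\Phi=O(\ep)$ on the finite interval by the variation-of-parameters and Gronwall arguments of Section~\ref{S:FT}, since at $\ep=0$ the system is autonomous by (B1) and $\partial_t(f,g)=O(\ep)$ by (B1) and (A5$'$).

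The main obstacle is the bookkeeping of norms. The vector field of \eqref{eq3.1} maps $X_1\times Y_1$ only into $X\times Y$, so time-derivatives of the flow, and the correction terms built from $\partial_{t_0}h_u$, are controlled a priori only in the weaker norm; this is harmless for the scalar quantities $\Gamma$ and $t'$, which pass through $Q_v'$, but for the $X_1\times Y_1$-estimate of the crossing point one cannot simply absorb the $O(\ep)$ time shift $|t'-t_1|$. The remedy is the extra regularity imposed in (D3), namely that $|Ax_h(t)|_{X_1}$ is bounded, which makes the unperturbed orbit Lipschitz in $t$ in the $X_1$-topology so that the time shift costs only $O(\ep)$ there; making this interact cleanly with Theorem~\ref{thm3.1} is the part that requires care.
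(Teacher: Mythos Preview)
Your proof is correct and follows essentially the same route as the paper: define the scalar crossing function via $Q_v'$, use the implicit function theorem at $(t_1,t_0,0)$ where the derivative is $-1$, and read off the three $O(\ep)$ bounds from the finite-time convergence results of Section~\ref{S:FT} together with the $O(\ep)$ estimates on $h_u-h_u^0$ and $\partial_{t_0}h_u$ from Section~\ref{S:InMa}. The paper's argument is organized slightly differently in two places: (i) for the $C^1$-closeness of $\Gamma(\cdot,\ep)$ to $\Gamma(\cdot,0)$ it invokes Theorem~\ref{thm3.2} explicitly to compare $D\Phi$ with $D\Phi^0$, whereas you appeal to continuity of the flow derivative in all arguments (which is precisely what Theorem~\ref{thm3.2} supplies, so you should cite it); (ii) for $\partial_{t_0}\Gamma=O(\ep)$ the paper writes out $V_\ep(t_0,\Phi)-D\Phi\,V_\ep(t_0+t',x_1')+D\Phi\,\partial_{t_0}h_u$ and compares termwise to the autonomous $\ep=0$ expression, while you argue more directly that a simultaneous shift of initial and terminal times produces an $O(\ep)$ effect because $\partial_t(f,g)=O(\ep)$. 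Your route is a bit cleaner here, but note that extracting $\partial_t(f,g)=O(\ep)$ from (B1), (B2) and (A5$'$) requires the same mean-value-in-$\ep$ trick used in the proof of Proposition~\ref{thm4.26}.
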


\begin{proof}
The proof is obviously based on the Implicit Function Theorem, however, we have to be rather careful
due to the singular perturbation natural of the problem. We will use $\partial_1\Phi,\partial_2\Phi$ to
denote the differentiation with respect to terminal and initial time,
respectively. For any $t_0\in\mathbb{R}$, let
\[
\gamma(t',\ep)=Q_v'(\Phi(t_0,t_0+t',x_1',\ep)-x_0) \ , \
\gamma(t',0)=Q_v'(\Phi^0(-t',x_1)-x_0).
\]
Theorem \ref{thm3.1} implies, for $t'$ on any bounded interval,
\begin{equation}\label{eq6.3}
|\gamma(t',\ep)-\gamma(t',0)|\leq C'\ep.
\end{equation}
To show the $C^1$ closeness of $\gamma(\cdot, \ep)$ and $\gamma(\cdot, 0)$, using the definition
of $x_1'$, one can compute
\[\begin{aligned}
\Big|\partial_{t'}\gamma(t',\ep)&-\partial_{t'}\gamma(t',0)\Big| \leq \Big|Q_v'D\Phi(t_0,t_0+t',x_1',\ep)
\partial_{t_0}h_u(x_1',t_0+t',\ep)\Big|\\
&+\Big|Q_v'\Big(D\Phi(t_0,t_0+t',x_1',\ep) V_{\ep}(t_0+t',x_1') -D\Phi^0(-t',x_1)V_0(x_1)\Big)\Big|,
\end{aligned}\]
where $V_{\ep}(t,x),V_0(x)$ represent the velocity field of \eqref{eq3.1} and \eqref{eq3.2} at $(t,x)$,
respectively. From Proposition \ref{P:appro} and Remark \ref{R:other}, we have $|x_1'-x_1|_{X_1}\leq C'\ep$.
Explicit computations based on the forms of \eqref{eq3.1} and \eqref{eq3.2} imply
$|P_X(V_{\ep}(t_0+t',x_1')-V_{0}(x_1))|_{X}\leq C'\ep$. Applying Theorem \ref{thm3.2}, we obtain
\[
\Big|Q_v'\big(D\Phi(t_0,t_0+t',x_1',\ep)V_{\ep}(t_0+t',x_1')
-D\Phi^0(-t',x_1)V_0(x_1)\big)\Big| \leq C'\ep.
\]
From Theorem \ref{thm4.26} and Remark \ref{R:other}, we have
$|\partial_{t_0}h_u(x_1',t_0+t',\ep)|_X\leq C'\ep$ and thus
\begin{equation}\label{eq6.4}
\Big|Q_v'D\Phi(t_0,t_0+t',x_1',\ep)\partial_{t_0}h_u(x_1',t_0+t',\ep)\Big|\leq
C'\ep.
\end{equation}
Therefore, we have proved $\gamma(t',\ep)$ and $\gamma(t',0)$ are $C^1$ close for $t'$ on
bounded intervals.

Since the system \eqref{eq3.2} is autonomous when $\ep=0$,
\[
\gamma(t_1, 0)=0, \qquad \partial_{t'}\gamma(t_1,0)=-Q_v'\partial_1\Phi^0(-t_1,x_1)=-Q_v'v=-1.
\]
By implicit function theorem, there exists a unique $t'=t'(t_0)$
such that
\[
\Phi(t_0,t_0+t',x_1',\ep) \in x_0 + \Sigma \qquad |t'(t_0)-t_1|\leq C'\ep.
\]
Moreover, from Theorem \ref{eq3.1} and the $C^1$ smoothness of $\Phi(-t', x_1) \in X_1$ in $t'$
which is due to the assumption $Ax_h\subset X_1$, it is easy to obtain
\begin{equation}\label{eq6.5}
\big|\Phi(t_0,t_0+t',x_1',\ep)-x_0\big|_{X_1\times Y_1}\leq C'\ep.
\end{equation}
Finally, note that
\[\begin{aligned}
\partial_{t_0}\gamma(t',\ep)=&Q_v'\Big(V_{\ep}(t_0,\Phi(t_0,t_0+t',x_1',\ep))
-D\Phi(t_0,t_0+t',x_1',\ep)V_{\ep}(t_0+t',x_1')\\
&+D\Phi(t_0,t_0+t',x_1',\ep)\partial_{t_0}h_u\Big).
\end{aligned}\]
By Theorem \ref{thm3.2} and \eqref{eq6.4}, \eqref{eq6.5}, we have
$|\partial_{t_0}\gamma(t',\ep)|\leq C'\ep$, which implies the desired estimate on $\p_{t_0} t'$.
\end{proof}

Next we consider the tangent space $T(\Sigma\bigcap\mathcal{M}_{\ep}^{cs}(t_0))$ near
$\Phi(t_0,t_0+t'(t_0),x_1',\ep)$ based on Theorem \ref{thm3.2} and more directly Proposition \ref{thm4.20}
and Lemma \ref{L:FTL}.
Let $E^\ep (t, t_0, x)$ be the evolution operator defined in Theorem \ref{thm3.2} for $x \in X_1$, i.e.
\[
\p_1 E^\ep (t, t_0, x) = \big(\frac J\ep + D_y g_0 (\Phi^0(t-t_0, x))\big) E^\ep (t, t_0, x), \qquad
E^\ep (t_0, t_0, x) =I_Y.
\]
We notice that the operator $E$ defined in Proposition \ref{thm4.20} is only $O(\ep)$ away from $E^\ep$ on
any finite interval. Therefore, we have

\begin{lemma}\label{lemma6.2}
Let $t'=t'(t_0, \ep)$ be the one found in Lemma \ref{lemma6.1}. For any $C>0$ and small $\delta>0$
(independent of $\ep$) and $(\xi_{cs},\xi_y)\in B_{\delta}(P_{cs}x_1,X_1^{cs})\times B_{C\ep}(Y_1)$,
we have
\begin{equation*}\label{eq6.7}
\big|\Phi(t_0,t_0+t',\xi_{cs}+\xi_y+h_{u}(\xi_{cs},\xi_y,t_0+t',\ep),\ep)-x_0\big|_{X_1\times
Y_1}\leq C'\delta.
\end{equation*}
Moreover, if $(\delta x,\delta y)\in X_1^{cs}\times Y_1$ with $|{\delta x}|_{X_1}+|\delta
y|_{Y_1}\leq 1$, then
\begin{equation*}\label{eq6.8}\begin{aligned}
&\Big|D_{\xi_{cs}}\big(\Phi (t_0,t_0+t',\xi_{cs}+\xi_y+h_{u}(\xi_{cs},\xi_y,t_0+t',\ep),\ep)\big)
\delta x\\
&\hspace{2cm}-D_{\xi_{cs}}\big(\Phi^0(-t',\xi_{cs}+h_{u}^0(\xi_{cs}))\big){\delta
x}\Big|_{X_1}\leq C'\ep,
\end{aligned}
\end{equation*}
and
\begin{equation*}
\begin{aligned}
&P_XD_{\xi_y}\big(\Phi (t_0,t_0+t',\xi_{cs}+\xi_y+h_{u}(\xi_{cs},\xi_y,t_0+t',\ep),\ep)\big)\delta
y \in B_{C'\ep} (X) \cap B_{C'} (X_1)\\
&\Big|\Big(P_YD_{\xi_y}\big(\Phi (t_0,t_0+t',\xi_{cs}+\xi_y+h_{u}(\xi_{cs},\xi_y,t_0+t',\ep),\ep)\big)\\
&\hspace{3.5cm}-E(t_0,t_0+t';\xi_{cu}+h_u^0(\xi_{cu}),\ep)\Big)\delta
y\Big|_{Y_1}\leq C'\ep.
\end{aligned}
\end{equation*}
where $C'$ depends on $C$ and those constants in assumptions.
\end{lemma}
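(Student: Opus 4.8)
The plan is to combine the finite-time approximation results of Section~\ref{S:FT} with the asymptotic estimates on the center-unstable integral manifold and its evolution operator from Proposition~\ref{thm4.20}, applied over the time interval $[t_0+t', t_0]$, whose length stays within $C'$ of $|t_1|$ by Lemma~\ref{lemma6.1}. First I would establish the $C^0$ estimate: write
\[
\xi_{cs} + \xi_y + h_u(\xi_{cs},\xi_y,t_0+t',\ep) = \big(\xi_{cs} + h_u^0(\xi_{cs})\big) + \big(\xi_y + h_u(\xi_{cs},\xi_y,t_0+t',\ep) - h_u^0(\xi_{cs})\big),
\]
where by Proposition~\ref{P:appro}, Remark~\ref{R:other}, and $|\xi_y|_{Y_1} \le C\ep$ the correction term is $O(\delta)$ in $X_1$ and $O(\delta)$ in $Y_1$ (in fact the $X_1^u$-part of $h_u$ is $O(\ep)$ close to $h_u^0$ and $|D_{\xi_y} h_u| \le C'$). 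Since $\xi_{cs} + h_u^0(\xi_{cs})$ lies on $\CM_0^{cu}$ within $C'\delta$ of $P_{cs}x_1 + h_u^0(P_{cs}x_1) = x_1'(t',0)$ along the unperturbed flow, and $\Phi^0(-t',x_1) = x_0$, the continuous dependence on initial data for \eqref{eq3.2} together with Theorem~\ref{thm3.1} (comparing $\Phi$ to $\Phi^0$) gives $|\Phi(t_0,t_0+t',\cdot,\ep) - x_0|_{X_1\times Y_1} \le C'\delta$.

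For the derivative estimates I would linearize along the trajectory. The map $\xi_{cs} \mapsto \Phi(t_0,t_0+t',\xi_{cs}+\xi_y+h_u(\xi_{cs},\xi_y,t_0+t',\ep),\ep)$ has derivative $D\Phi$ composed with $I + D_{\xi_{cs}} h_u$; since $D_{\xi_{cs}} h_u$ is $O(\ep)$ close to $D_{\xi_{cs}} h_u^0$ by Proposition~\ref{P:appro}, and the corresponding composed object for the unperturbed flow equals $D_{\xi_{cs}}\big(\Phi^0(-t',\xi_{cs}+h_u^0(\xi_{cs}))\big)$, the difference is controlled by Proposition~\ref{thm4.20} (the $D_{\xi_{cu}}$-estimate there, with $cu$ playing the role of $cs$ here and the time running backward), which gives the $O(\ep)$ bound in $X_1$. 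For the $\xi_y$-derivative, $D_{\xi_y}$ of the composed map is $D\Phi \circ (D_{\xi_y} h_u + I_{Y_1})$; the $P_X$-component is $O(\ep)$ in $X$ and $O(1)$ in $X_1$ exactly as in the last line of Proposition~\ref{thm4.20} (the failure to improve to $X_1$ is the same integration-by-parts obstruction as in Lemma~\ref{L:FTL}, where $A$ is produced), while the $P_Y$-component is compared to the evolution operator $E(t_0,t_0+t';\xi_{cu}+h_u^0(\xi_{cu}),\ep)$, again by the middle estimate of Proposition~\ref{thm4.20}, noting that $E$ differs from $E^\ep$ by $O(\ep)$ on the bounded interval $[t_0+t',t_0]$.

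The main obstacle is bookkeeping rather than a genuinely new difficulty: one must verify that all the constants produced remain uniform as $\ep \to 0$, and in particular that the $\ep$-dependence of $t'$ (established in Lemma~\ref{lemma6.1} as $|t'-t_1| \le C'\ep$) does not spoil the estimates — this requires differentiating the flow map in its terminal-time argument as well, which is legitimate because $A x_h \in X_1$ by (D3) makes $t' \mapsto \Phi^0(-t',x_1)$ genuinely $C^1$ into $X_1$, and the $\ep$-perturbation of this is again handled by Theorems~\ref{thm3.1}--\ref{thm3.2}. The other delicate point is that the base point $\xi_{cs} + h_u^0(\xi_{cs})$ ranges over a $\delta$-neighborhood, not just the single point $P_{cs}x_1 + h_u^0(P_{cs}x_1)$, so the finite-time comparison estimates must be applied with initial data varying in a fixed (independent of $\ep$) neighborhood; this is exactly the generality in which Lemma~\ref{L:FT}, Lemma~\ref{L:FTL}, and Proposition~\ref{thm4.20} were stated, so no additional work is needed there.
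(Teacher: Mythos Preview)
Your proposal is correct and mirrors the paper's own argument: the paper states the lemma as a direct consequence of Theorem~\ref{thm3.2}, Lemma~\ref{L:FTL}, and Proposition~\ref{thm4.20}, together with the observation that $|\xi_y|_{Y_1}=O(\ep)$ combined with the $C^2$ smoothness of $h_u$ in $\xi_y$ (Theorem~\ref{thm4.6}) extends those estimates from $\xi_y=0$ to the full $B_{C\ep}(Y_1)$ ball. The one technical point the paper flags that you do not mention explicitly is that $\dim X_1^u<\infty$ (assumption (D1)) is needed so that the $|\cdot|_{X^u}$ and $|\cdot|_{X_1^u}$ norms are equivalent, which is what upgrades the $D_{\xi_y}h_u$ estimate of Proposition~\ref{P:appro} (only in $L(Y_1,X^u)$) to the $X_1^u$ norm required to feed into the flow-map estimates.
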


The $C^2$ smoothness of $\Phi$ and $h_u$ and the assumption dim$-X_1^u<\infty$, which implies the equivalence
between $|\cdot|_{X^u}$ and $|\cdot|_{X_1^u}$, are used in the proof. Moreover, even though Proposition
\ref{thm4.20} is stated only for $\xi_y=0$, our assumption $|\xi_y|_{Y_1} =O(\ep)$ combined with the smoothness of
$h_u$ in $\xi_y$ (Theorem \ref{thm4.6}) is sufficient to guarantee the above estimates.

In the next lemma, we will write $\CM_\ep^{cs} \cap (x_0 + \Sigma)$
locally near $x_0$ in the coordinate system $(d, x^{cs}, y, x^u)$.
The main issues are the size of the coordinate chart of the manifold
and the regularity estimates.

\begin{lemma}\label{lemma6.3}
For any $b>0$, there exist $\ep_0>0$, $b'>0$ and
\[
\Upsilon=(\Upsilon^d,\Upsilon^u):B_{b'} (\overline{X}_1^{cs})\times B_{b}(Y_1)\times
\mathbb{R}\times(0,\ep_0) \rightarrow(\mathbb{R},\overline{X}_1^{u})
\]
such that $x_0 + \mbox{Graph}(\Upsilon(t_0,\ep))$ is an open subset of $\wt{\mathcal{M}}_{\ep}^{cs}(t_0)$
where
\[\begin{aligned}
\mbox{Graph}(\Upsilon(t_0,\ep)) \triangleq
\big\{\Upsilon^d(x^{cs}, y,t_0,\ep) \omega + x^{cs}&+\ep y +\Upsilon^u(x^{cs},y,t_0,\ep)\big|\\
&x^{cs}\in B_{b'}(\overline{X}_1^{cs}),y\in B_{b}(Y_1)\big\}.
\end{aligned}\]
Moreover, $\Upsilon$ are $C^2$ in $x^{cs},y$ and satisfy
\begin{align}
\label{eq6.14} &\big|\Upsilon-\Upsilon_0\big|_{C^1(B_{b'}
(\overline{X}_1^{cs})\times B_{b}(Y_1), \overline{X}_1^u \times \R)}+|\partial_{t_0}\Upsilon|_{C^0}\leq
C'\ep,
\end{align}
where $C'$ only depends on $b$ and those constants in assumptions.
\end{lemma}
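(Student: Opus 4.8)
The plan is to realize $\wt{\CM}_\ep^{cs}(t_0)\triangleq\CM_\ep^{cs}(t_0)\cap(x_0+\Sigma)$ near $x_0$ as a graph over the base space $\overline X_1^{cs}\oplus Y_1$ in the splitting $\Sigma=\R\omega\oplus\overline X_1^{cs}\oplus\overline X_1^u\oplus Y_1$, and then to rescale the fast $Y_1$-coordinate by $\ep$. First I would upgrade Lemma \ref{lemma6.1}: the point $q_0\triangleq\Phi(t_0,t_0+t',x_1',\ep)$ lies on $\CM_\ep^{cs}(t_0)\cap(x_0+\Sigma)$ and is $O(\ep)$-close to $x_0$, and by the same argument the local flow-out of $\CM_\ep^{cs}$ near $x_0$ is a $C^2$ submanifold, since $\Phi$ is $C^2$ with bounds uniform in $\ep$ and the local manifold is $C^2$ by Theorem \ref{thm4.3}. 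Because the velocity field $V_\ep$ of \eqref{eq3.1} is $O(\ep)$-close in $X$ to $V_0$ near $x_0$ (as already used in Lemma \ref{lemma6.1}) and $V_0(x_0)=v$ is transverse to $\Sigma$, while $v\in T_{x_0}\CM_0^{cs}$, the manifold $\CM_\ep^{cs}(t_0)$ meets $x_0+\Sigma$ transversally at $q_0$; moreover, by Lemma \ref{lemma6.2}, Proposition \ref{thm4.20} and Remark \ref{R:other}, $T_{q_0}\wt{\CM}_\ep^{cs}(t_0)$ is $O(\ep)$-close, block by block and in the graph-norm appropriate to each factor (using $\dim X_1^u<\infty$ for the $\overline X_1^u$ block), to $\overline X_1^{cs}\oplus Y_1$.

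Consequently, near $q_0$, $\wt{\CM}_\ep^{cs}(t_0)$ is the graph of a $C^2$ map
\[
\wt\Upsilon=(\wt\Upsilon^d,\wt\Upsilon^u):B_{b'}(\overline X_1^{cs})\times B_{\ep_1}(Y_1)\times\R\times(0,\ep_0)\longrightarrow\R\times\overline X_1^u,
\]
so that $x_0+\bigl\{\wt\Upsilon^d(x^{cs},\zeta,t_0,\ep)\,\omega+x^{cs}+\zeta+\wt\Upsilon^u(x^{cs},\zeta,t_0,\ep)\bigr\}$ is an open piece of $\wt{\CM}_\ep^{cs}(t_0)$, where $\zeta$ denotes the $Y_1$-coordinate $Q_y(\cdot-x_0)$. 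Here $b'$ and $\ep_1$ are independent of $\ep$, because the only quantities entering this graph (equivalently, implicit function) construction --- the transversality margin and the uniform $C^2$-bound on $\CM_\ep^{cs}(t_0)$ near $x_0$ --- are uniform in $\ep$ by Sections \ref{S:InMa} and \ref{S:FT}. I then define $\Upsilon(x^{cs},y,t_0,\ep)\triangleq\wt\Upsilon(x^{cs},\ep y,t_0,\ep)$, which is $C^2$ in $(x^{cs},y)$ on $B_{b'}(\overline X_1^{cs})\times B_b(Y_1)$ whenever $\ep<\ep_1/b$, and whose graph is the one in the statement.

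To get \eqref{eq6.14}, note first that $D_y\Upsilon=\ep\,D_\zeta\wt\Upsilon$, and on the $O(\ep)$-thin range $|\zeta|\le\ep b$ one has $|D_\zeta\wt\Upsilon|\le C'\ep$ (the linear part of $\wt\Upsilon$ at $\zeta=0$ is $O(\ep)$ by the tangent-space estimate above, and the $C^2$-norm of $\wt\Upsilon$ is bounded uniformly in $\ep$), hence $|D_y\Upsilon|\le C'\ep$. At $\zeta=0$, $\wt\Upsilon(\cdot,0,t_0,\ep)$ is the defining function of the slice $\wt{\CM}_\ep^{cs}(t_0)\cap\{y=0\}$, which is $O(\ep)$-$C^1$-close to $\Upsilon_0$ by the asymptotic estimates in Proposition \ref{thm4.20}, Proposition \ref{P:appro} and Remark \ref{R:other}, together with the $C^1$-closeness of $\gamma(\cdot,\ep)$ to $\gamma(\cdot,0)$ established in the proof of Lemma \ref{lemma6.1}. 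Finally $\partial_{t_0}\Upsilon=\partial_{t_0}\wt\Upsilon(x^{cs},\ep y,\cdot,\ep)$ is $O(\ep)$ because $\partial_{t_0}h_u=O(\ep)$ (Theorem \ref{thm4.26} and Remark \ref{R:other}) and $\partial_{t_0}t'=O(\ep)$ (Lemma \ref{lemma6.1}). Combining these via the triangle inequality over the $O(\ep)$-thin $y$-range, and absorbing $b'$, which depends only on $b$, into $C'$, gives \eqref{eq6.14}.

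The main obstacle is the first paragraph: controlling, uniformly in $\ep$, both the transversality of $\CM_\ep^{cs}(t_0)$ to the section and the $O(\ep)$-closeness of its tangent space to $\overline X_1^{cs}\oplus Y_1$ near $q_0$, since differentiating the flow in the fast direction produces the singular factor $\tfrac J\ep$. This is, however, exactly the kind of estimate carried out in Lemmas \ref{lemma6.1} and \ref{lemma6.2} and Proposition \ref{thm4.20} --- via integration by parts, the bound $|y|=O(\ep)$ along orbits near the section, and the vanishing of the initial $\delta y$-variation when one moves along $\overline X_1^{cs}$ --- so most of the work there can be quoted rather than repeated. A secondary subtlety that the $\ep y$ rescaling is designed to absorb is that, without it, the chart would be genuinely $\ep$-small in the fast direction instead of $O(1)$ in the rescaled coordinate, and the $C^1$-closeness of $\partial_{t_0}\Upsilon$ and $D_{x^{cs}}\Upsilon$ to their unperturbed counterparts would be lost.
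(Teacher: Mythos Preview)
Your overall strategy---flow the local center-stable manifold from near $x_1$ to the section at $x_0$ and write the resulting slice as a graph---is the same as the paper's. The gap is in your second paragraph: the assertion that the \emph{unrescaled} graph $\wt\Upsilon$ exists on $B_{b'}(\overline X_1^{cs})\times B_{\ep_1}(Y_1)$ with $\ep_1$ independent of $\ep$, and with uniform $C^2$-bound, is not justified by ``the transversality margin and the uniform $C^2$-bound on $\CM_\ep^{cs}(t_0)$ near $x_0$.'' A submanifold has no intrinsic $C^2$-bound; it depends on the chart. The only chart you actually have near $x_0$ is the flow-image of the local parametrization near $x_1$, and to pass from that to a graph over $\overline X_1^{cs}\times Y_1$ you must invert $(Q_{cs},Q_y)$ composed with this parametrization---which is precisely the nontrivial step. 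The derivative estimates you quote from Lemma~\ref{lemma6.2} are only proved for initial data with physical $|\xi_y|_{Y_1}\le C\ep$; on that range the $Q_y$-image of the slice is $O(\ep)$-thick, so the $\zeta$-domain of any $\wt\Upsilon$ obtained this way has radius $O(\ep)$, not an $\ep$-independent $\ep_1$. Consequently your later claim that ``the $C^2$-norm of $\wt\Upsilon$ is bounded uniformly in $\ep$'' is also unsupported (each $\zeta$-derivative of the inverted chart picks up a factor $\sim 1/\ep$).

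The paper avoids this by rescaling \emph{before} inverting. It parametrizes the local manifold near $x_1$ with physical $y=\ep\xi_y$, $\xi_y\in B_{b_1}(Y_1)$, and then shows directly that $(Q_{cs}\mathcal{F},\tfrac{1}{\ep}Q_y\mathcal{F})$ is a diffeomorphism onto $B_{b'}\times B_b$ with $C^1$-bounds uniform in $\ep$, via a two-step implicit-function argument driven by the block structure \eqref{eq6.24}: first solve $\tfrac{1}{\ep}Q_y\mathcal{F}=y$ for $\xi_y$ (the $\xi_y$-derivative is $E+O(\ep)$ with $E,E^{-1}$ bounded by Lemma~\ref{lemma6.2}), then solve $Q_{cs}\mathcal{F}=x^{cs}$ for $\xi_{cs}'$. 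Choosing $b_1$ large depending on $b$ makes the image cover $B_{b'}\times B_b$, and $\Upsilon$ is then \emph{defined} by \eqref{eq6.26}; the estimate \eqref{eq6.14} follows from \eqref{eq6.22}--\eqref{eq6.24} and \eqref{eq6.23}. Your argument can be repaired by replacing the abstract graph-theorem shortcut with this explicit inversion in the rescaled variable.
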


For $\ep=0$, we define $\Upsilon (x^{cs}, y, t_0, 0) = \Upsilon_0 (x^{cs})$.
Notice in the definition of $\mbox{Graph}(\Upsilon)$ we
scale $y$ to $\ep y$. This is to avoid the dependence on $\ep$ of
the domain where $\Upsilon$ is defined.

\begin{proof}
The first step of the proof is to establish a correspondence between an open set of
$\wt{\CM}_\ep^{cs}(t_0)$, which is a hypersurface of $\CM_\ep^{cs}(t_0)$, near $x_0$ and a hypersurface of
$\CM_\ep^{cs} (t_0+t')$ near $x_1'$.

Let $w=P_{cs}(Ax_1+f_0(x_1))$ and $\wt{X}_1^{cs}\subset
X_1^{cs}$ such that $X_1^{cs}=\mathbb{R}w\oplus \wt{X}_1^{cs}$. For any $b_1>0$, define
\[\begin{aligned}
\wt{\mathcal{F}}(a,\xi_{cs}',\xi_y,\ep)=
\Phi\big(t_0,&t_0+t',P_{cs}x_1+aw+\xi_{cs}'+\ep\xi_y\\
&+h_u(P_{cs}x_1+aw+\xi_{cs}',\ep\xi_y,t_0+t',\ep),\ep\big)-x_0.
\end{aligned}\]
Here $a\in[-\delta,\delta]$, $\xi_{cs}'\in
B_{\delta}(\wt{X}_1^{cs})$ and $\xi_y\in B_{b_1}(Y_1)$, where
$\delta>0$ sufficiently small but independent of $\ep$. From Lemma \ref{lemma6.2},
we have
\begin{equation}\label{eq6.17}
\big|\wt{\mathcal{F}}(\cdot,\ep)-\wt{\mathcal{F}}(\cdot,0)\big|_
{C^1([-\delta,\delta]\times B_{\delta}(\wt{X}_1^{cs})\times
B_{b_1}(Y_1),X_1\times Y_1)}\leq C'\ep,
\end{equation}
where $C'$ depends on those constants in the assumptions. Lemma
\ref{lemma6.1} implies $Q_v\wt{\mathcal{F}}(0,0,0,\ep)=0$ for all $\ep\in[0,\ep_0)$.
From the Implicit Function Theorem, we obtain that, when $\delta$ and $\ep_0$ are sufficiently
small there exists $a:B_{\delta}(\wt{X}_1^{cs}) \times B_{b_1}(Y_1)\times[0,\ep_0)\rightarrow
[-\delta,\delta]$ such that
\begin{equation} \label{E:a} \begin{aligned}
\mathcal{F} (\xi_{cs}',\xi_y,\ep) \triangleq \wt{\mathcal{F}} (a(\xi_{cs}',\xi_y,\ep),\xi_{cs}',\xi_y,\ep)
\in\Sigma, \quad \forall \, (\xi_{cs}',\xi_y)\in B_{\delta}(\wt{X}_1^{cs})\times B_{b_1}(Y_1).
\end{aligned}\end{equation}
For $\ep=0$, we identify $a(\xi_{cs'},\xi_y,0)$ with $a_0(\xi_{cs}')$, which satisfies
\[\begin{aligned}
\Phi^0(-t_1,P_{cs}x_1+&a_0(\xi_{cs}')w+\xi_{cs}' +h_u^0(P_{cs}x_1+a_0(\xi_{cs}')w+\xi_{cs'})) - x_0
\in\Sigma\cap X_1.
\end{aligned}\]
Moreover, by assumption (D2) and Theorem \ref{thm4.3}, $a$ is $C^2$ in $\xi_{cs}',\xi_y$ , $a_0$ is $C^2$ in
$\xi_{cs}'$, $a(0,0,\ep)=a_0(0)=0$, and
\begin{equation}\label{eq6.21}
\big|a(\cdot,\cdot,\ep)-a_0(\cdot) \big|_{C^1(B_{\delta}(\wt{X}_1^{cs})\times
B_{b_1}(Y_1)\times[0,\ep_0),[-\delta,\delta])}\leq C'\ep.
\end{equation}
Consequently,
\begin{equation}\label{eq6.22}
|\mathcal{F}(\cdot,\cdot,\ep)-\mathcal{F}_0(\cdot)|_{C^1 (B_{\delta}(\wt{X}_1^{cs})\times
B_{b_1}(Y_1)\times[0,\ep_0), X_1 \times Y_1)}\leq C'\ep
\end{equation}
where
\[
\mathcal{F}_0(\xi_{cs}') =\mathcal{F}(\xi_{cs}',\xi_y,0) =\wt{\mathcal{F}}
(a(\xi_{cs}',\xi_y,0),\xi_{cs'} ,\xi_y,0) =\mathcal{F}(a_0(\xi_{cs}'), \xi_{cs}',0,0).
\]
To obtain the estimate on $\p_{t_0} a$, notice \eqref{E:a} is equivalent to $Q_v' \wt{\mathcal{F}}=0$.
Differentiate it with respect to $t_0$ and wrote $\xi_{cs}$ for $P_{cs}x_1+aw+\xi_{cs}'$, we note that
\[\begin{aligned}
&\big|\partial_{t_0}Q_v'\wt{\mathcal{F}}(a,\xi_{cs}',\xi_y,\ep)\big| \\
\leq &\Big|Q_v' \big(V_{\ep} (t_0,\wt{\mathcal{F}} (a,\xi_{cs}',\xi_y,\ep)+x_0)- (1+\partial_{t_0}t')
D\Phi(t_0,t_0+t', \xi_{cs}+\ep\xi_y \\
&+h_u(\xi_{cs},\ep\xi_y,t_0+t',\ep),\ep) V_{\ep}(t_0+t',\xi_{cs} +\ep\xi_y +h_u(\xi_{cs},\ep\xi_y,t_0+t',\ep))
+D\Phi\partial_{t_0}h_u\big)\Big|\\
\leq&\Big|Q_v' \big(V_{0}(\wt{\mathcal{F}}(a,\xi_{cs}',\xi_y,0) +x_0)-D\Phi^0(-t_1,\xi_{cs}
+h_u^0(\xi_{cs}))V_{0}(\xi_{cs}+h_u^0(\xi_{cs}'(a)))\big)\Big|+C'\ep.
\end{aligned}\]
Here we use \eqref{eq6.17}, Lemma \ref{lemma6.1}, \ref{lemma6.2}, Proposition \ref{thm4.26}, Theorem
\ref{thm3.2} to obtain the above estimates. Finally, the term other than $C'\ep$ in the above right side
vanishes since the system is autonomous when $\ep=0$. Therefore $|\partial_{t_0}a|\leq C'\ep$ which along
with a similar procedure implies
\begin{equation}\label{eq6.23}
|\partial_{t_0}\mathcal{F}|_X \leq C'\ep.
\end{equation}

We claim for any $b>0$ there exist $b_1$, $b'>0$
independent of $\ep$, such that the map
\[
(Q_{cs}\mathcal{F},\frac{1}{\ep}Q_y\mathcal{F})^{-1}:B_{b'}(\overline{X}_1^{cs})\times
B_{b}(Y_1)\longrightarrow B_{\delta}(\wt{X}_1^{cs})\times B_{b_1}(Y_1)
\]
is well defined and its $C^1$ norm is uniform in $\ep$.

To prove this, we need solve the equations
\begin{equation} \label{E:inverse}
a.)\quad Q_{cs}\mathcal{F}(\xi_{cs}',\xi_y,\ep)=x^{cs}, \qquad b.) \quad \frac{1}{\ep}
Q_y\mathcal{F}(\xi_{cs}',\xi_y,\ep)=y.
\end{equation}
We first find a good approximation of this system of equations. By \eqref{eq6.22} and Lemma \ref{lemma6.2},
one can compute
\begin{equation}\label{eq6.24}
\begin{aligned}
&Q_{cs}D_{\xi_{cs}'}\mathcal{F}=Q_{cs}D_{\xi_{cs}'}\mathcal{F}_0+\ep O_1, \qquad
Q_{cs}D_{\xi_y}\mathcal{F}=\ep O_2,\\
&Q_yD_{\xi_{cs}'}\mathcal{F}=\ep O_3, \qquad Q_yD_{\xi_y}\mathcal{F}=\ep E+\ep^2 O_4,
\end{aligned}\end{equation}
where $O\triangleq \begin{pmatrix} O_1 &O_2\\O_3 &O_4\end{pmatrix} \in L(\wt{X}_1^{cs} \times Y_1,
X_1 \times Y_1)$ is bounded uniformly in $\ep$ and $E$ is the linear evolutionary operator defined in
Lemma \ref{lemma6.2} at the base point
\[
\xi^0 \triangleq P_{cs}x_1+a_0(\xi_{cs}')w+\xi_{cs}'+h_u^0(P_{cs}x_1+a_0(\xi_{cs}')w+\xi_{cs}').
\]
It implies that for fixed $\xi_{cs}'$ and $\ep<<1$,
\[
\big|\frac{1}{\ep}Q_y\mathcal{F}(\xi_{cs}',\xi_y,\ep)- \big(E\xi_y+\frac{1}{\ep}Q_y \mathcal{F}(\xi_{cs}',0,\ep)\big)\big|_{C^1(B_{b}(Y_1), Y_1)}\leq C'\ep.
\]
Since $E$ and $E^{-1}$ both have upper bounds independent of $\ep$ and $\mathcal{F}$ is $C^2$, by \eqref{eq6.24}
and an Implicit Function Theorem argument, for any $y\in B_{b}(Y_1)$, $\xi_{cs}'\in B_{\delta}(\wt{X}_1^{cs})$
and $\ep\in[0,\ep_0)$ b.) of \eqref{E:inverse} has a unique solution $\xi_y(\xi_{cs}',y,\ep)$ and
$|D_{\xi_{cs}'} \xi_y|\leq C'$. Along with \eqref{eq6.24}, it implies that, as a mapping of $\xi_{cs}'$,
\[
\big|Q_{cs}\mathcal{F}(\xi_{cs}',\xi_y(\xi_{cs}', y,\ep),\ep)
-Q_{cs}\mathcal{F}_0(\xi_{cs}')\big|_{C^1 (B_\delta (\wt{X}_1^{cs}), \overline{X}_1^{cs})}\leq C'\ep.
\]
Since $Q_{cs}\mathcal{F}_0$ is independent of $\ep$ and is locally
invertible, one can use an Inverse Function Theorem argument again to
prove there exist sufficiently small $b'>0$, $\ep_0>0$ so that for
$(x^{cs},y,\ep)\in B_{b'}(0,\overline{X}_1^{cs})\times
B_b(0,Y_1)\times[0,\ep_0)$, there exists a unique
$\xi_{cs}'(x^{cs},y,\ep)$ which is also $C^2$ in $x^{cs}$ and $y$
satisfying it along with $\xi_y (\xi_{cs}'s, y, \ep)$ solve \eqref{E:inverse}. Therefore, we proved the
existence of $(Q_{cs}\mathcal{F},\frac{1}{\ep}Q_y\mathcal{F})^{-1}$. The estimate on its $C^1$ norms follow
directly from \eqref{eq6.24}.

For $(x^{cs},y,t_0,\ep)\in B_{b'}(0,\overline{X}_1^{cs})\times
B_{b}(0,Y_1)\times\mathbb{R}\times[0,\ep_0)$, let
\begin{equation}\label{eq6.26}
x^{cs}+\ep y+\Upsilon(x^{cs},y,t_0,\ep)=\mathcal{F}\Big(
\big(Q_{cs}\mathcal{F},\frac{1}{\ep}Q_y\mathcal{F}\big)^{-1}(x^{cs},y),\ep\Big).
\end{equation}
When $\ep=0$, \eqref{eq6.26} is becomes
\[
x^{cs}+\Upsilon_0(x^{cs})=\mathcal{F}_0\Big(\big(Q_{cs}\mathcal{F}_0\big)^{-1}(x^{cs})\Big).
\]
Since $\mathcal{F}$ is $C^2$, $\Upsilon$ is also $C^2$. The estimates on $D \Upsilon$ follow
in a straight forward manner by differentiating \eqref{eq6.26} and using \eqref{eq6.24} and \eqref{eq6.23}.
\end{proof}

Finally, we present a similar coordinate representation of the stable manifold which is obtained in rather
similar fashion. For any $t_0\in\mathbb{R}$, there exists a unique $t''=t''(t_0, \ep)$ with $|t''-t_2|\leq
C'\ep$ such that
\[
\Phi(t_0, t'' + t_0, x'', \ep) \in x_0 + \Sigma, \text{ where } x''(t'', \ep) = P_u x_2 + h_{cs} (P_u x_2,
t_0 + t'', \ep)
\]
with similar estimates as in Lemma \ref{lemma6.1}. Moreover, $D\Phi (t_0, t_0 + t'', \cdot, \ep)$ satisfies
similar estimates as in Lemma \ref{lemma6.2} except there is no $D_{\xi_y}$ terms.

There exists $b>0$ sufficiently small but independent of $\ep$ and
\[
\Psi=(\Psi^d,\Psi^{cs},\Psi^y):B_{b}(\overline{X}_1^{u})\times\mathbb{R}
\times[0,\ep_0) \rightarrow(\mathbb{R},\overline{X}_1^{cs},Y_1)
\]
such that $\mbox{Graph}(\Psi(\cdot, t_0, \ep))$ is an open subset of $\wt{\mathcal{M}}_{\ep}^u(t_0)$ where
\[\begin{aligned}
\mbox{Graph}(\Psi(\cdot,t_0, \ep) \triangleq \Big\{&x^u+\Psi^d(x^u,t_0,\ep)
+\Psi^y(x^u,t_0,\ep) +\Psi^{cs}(x^u,t_0,\ep)\big|x^u\in B_{b}(\overline{X}_1^{u}) \Big\}.
\end{aligned}\]
Moreover, $\Psi^y$ is $C^2$ in $x^u$ and satisfy
\begin{equation}\label{eq6.28}
\big|\Psi(\cdot,t_0,\ep)-\Psi(\cdot,t_0,0)\big|
_{C^1(B_{b}(0,\overline{X}_1^{u}),\mathbb{R}\times\overline{X}_1^{cs}\times
Y_1)}\leq C'\ep \ , \ |\partial_{t_0}\Psi|_{X\times Y} \leq C'\ep,
\end{equation}
where $C'$ is independent of $\ep$.

\subsection{Persistence of homoclinic orbits under weakly dissipative
perturbation} \label{SS:diss}

In this subsection, we assume additionally
\begin{enumerate}
\item[(A7)]For $i=0,1,2$, the following quantities have a uniform bound
$C_0$,
\[(\partial_{\ep}^{2-i}D^if,\partial_{\ep}^{2-i}D^ig)\in C^0(X_1\times
Y_1\times\mathbb{R}^2,L_i(X_1\times Y_1,X_1\times Y_1)).\]
\end{enumerate}
In order to study the persistence of the homoclinic solution of \eqref{eq3.1}, we first derive the Melnikov
integral to measure the distance between $\wt{\mathcal{M}}_{\ep}^{cs}(t_0)$ and
$\wt{\mathcal{M}}_{\ep}^u(t_0)$. From the construction of the coordinate system, the intersection of $\wt{\mathcal{M}}_{\ep}^{cs}(t_0)$ and $\wt{\mathcal{M}}_\ep^u(t_0)$ is equivalent to the following system:
\begin{eqnarray}\label{eq6.29}
x^u&=&\Upsilon^u(x^{cs},y,t_0,\ep) \ , \
x^{cs}=\Psi^{cs}(x^u,t_0,\ep) \ , \ \ep
y=\Psi^y(x^u,t_0,\ep),\\\label{eq6.30}
d&=&\Upsilon^d(x^{cs},y,t_0,\ep)=\Psi^d(x^u,t_0,\ep).
\end{eqnarray}
From \eqref{eq6.14}, \eqref{eq6.28}, and a contraction mapping argument, one can easily prove

\begin{lemma}\label{lemma6.4}
There exists $\ep_0$ such that for every $\ep\in[0,\ep_0)$ and
$t_0\in\mathbb{R}$, there exist
$x^{cs}=x^{cs}(t_0,\ep),x^u=x^u(t_0,\ep),y=y(t_0,\ep)$, which are
continuous in $t_0$ and $\ep$, satisfying \eqref{eq6.29}. Moreover,
\begin{equation}\label{eq6.31}
|x^{cs}|_{X_1}+|x^u|_{X_1}+\ep|y|_{Y_1}\leq C'\ep,
\end{equation}
where $C'$ depends on constants in assumptions and uniform in $t_0$
and $\ep$.
\end{lemma}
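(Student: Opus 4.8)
The plan is to eliminate $x^{cs}$ and $y$ from the system \eqref{eq6.29} and reduce it to a single fixed-point equation in $x^u$. The last two equations of \eqref{eq6.29} give $x^{cs}$ and $y$ explicitly in terms of $x^u$, namely $x^{cs}=\Psi^{cs}(x^u,t_0,\ep)$ and $y=\tfrac1\ep\Psi^y(x^u,t_0,\ep)$; inserting these into the first equation shows that solving \eqref{eq6.29} is equivalent to finding a fixed point of
\[
\mathcal{K}(x^u)\triangleq \Upsilon^u\big(\Psi^{cs}(x^u,t_0,\ep),\,\tfrac1\ep\Psi^y(x^u,t_0,\ep),\,t_0,\ep\big),\qquad x^u\in B_{M\ep}(\overline{X}_1^u).
\]
I would then show $\mathcal{K}$ maps $B_{M\ep}(\overline{X}_1^u)$ into itself and contracts it for a suitable fixed $M$, and apply the Banach fixed-point theorem; continuity of the resulting $(x^{cs},x^u,y)$ in $(t_0,\ep)$ comes from the continuous dependence of a uniform contraction's fixed point on parameters, using the continuity of $\Upsilon$ and $\Psi$ recorded in \eqref{eq6.14} and \eqref{eq6.28}.

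The estimates rest on three observations already available. First, since \eqref{eq3.2} carries no $y$-component, the unperturbed unstable manifold lies in $X_1$, so $\Psi^y(\cdot,t_0,0)\equiv 0$; with \eqref{eq6.28} this gives $|\tfrac1\ep\Psi^y(x^u,t_0,\ep)|_{Y_1}\le C'$ and $|\tfrac1\ep D_{x^u}\Psi^y(x^u,t_0,\ep)|\le C'$, both $O(1)$ and independent of $\ep$. Second, $\Upsilon(\cdot,\cdot,t_0,0)=\Upsilon_0$ is independent of $y$, so by \eqref{eq6.14} one has $|D_y\Upsilon^u(x^{cs},y,t_0,\ep)|\le C'\ep$. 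Third, because $\overline{X}_1^{cs}$ and $\overline{X}_1^u$ are the tangent spaces of $\wt{\mathcal{M}}_0^{cs}$ and $\wt{\mathcal{M}}_0^u$ at $x_0$ (the coordinate origin), the unperturbed maps vanish to first order there: $\Upsilon_0^u(0)=0$, $D\Upsilon_0^u(0)=0$, $\Psi^{cs}(0,t_0,0)=0$, $D_{x^u}\Psi^{cs}(0,t_0,0)=0$. Combining these with \eqref{eq6.14}, \eqref{eq6.28} and the $C^2$ regularity of $\Upsilon$ and $\Psi$, one obtains $|\mathcal{K}(0)|_{X_1}\le C'\ep$ and, on $B_{M\ep}(\overline{X}_1^u)$,
\[
\|D\mathcal{K}(x^u)\|\le \underbrace{|D_{x^{cs}}\Upsilon^u|\,|D_{x^u}\Psi^{cs}|}_{o(1)}+\underbrace{|D_y\Upsilon^u|\cdot\big|\tfrac1\ep D_{x^u}\Psi^y\big|}_{O(\ep)\cdot O(1)}\longrightarrow 0\quad\text{as }\ep\to0.
\]
Hence, taking $M$ a fixed multiple of $C'$ and $\ep$ small, $\mathcal{K}$ contracts $B_{M\ep}(\overline{X}_1^u)$ into itself, giving the unique $x^u=x^u(t_0,\ep)$ with $|x^u|_{X_1}\le C'\ep$; then $x^{cs}=\Psi^{cs}(x^u,t_0,\ep)$ satisfies $|x^{cs}|_{X_1}\le C'\ep$ (again using $D_{x^u}\Psi^{cs}(0,\cdot,0)=0$) and $y=\tfrac1\ep\Psi^y(x^u,t_0,\ep)$ satisfies $|y|_{Y_1}\le C'$, which is exactly \eqref{eq6.31}.

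The main obstacle is just careful bookkeeping of the $\ep$-scalings: $x^{cs}$ and $x^u$ are $O(\ep)$ while $y$ is only $O(1)$, so one cannot run the contraction directly on the triple $(x^{cs},x^u,y)$ in the natural norm, since the block $\tfrac1\ep D_{x^u}\Psi^y$ of its linearization is $O(1)$, not small. Reducing to $x^u$ alone (equivalently, using a weighted norm on the triple) resolves this, because that dangerous block enters only multiplied by $D_y\Upsilon^u=O(\ep)$. A secondary point is the order of constant choices: in Lemma \ref{lemma6.3} the $y$-chart radius $b$ of $\Upsilon$ must be chosen larger than the $O(1)$ bound on $\tfrac1\ep\Psi^y$ (which depends only on the data attached to $\Psi$), so that the substituted value $y=\tfrac1\ep\Psi^y(x^u,\cdot)$ remains in the domain of $\Upsilon$; once $b$ is fixed this way, $M$ and $\ep_0$ are selected last and no circularity arises.
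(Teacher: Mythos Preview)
Your proposal is correct and is essentially a detailed implementation of what the paper records in one line as ``From \eqref{eq6.14}, \eqref{eq6.28}, and a contraction mapping argument, one can easily prove.'' Your reduction to a single fixed-point equation in $x^u$, together with the key observation that $D_y\Upsilon^u=O(\ep)$ compensates for the only $O(1)$ block $\tfrac1\ep D_{x^u}\Psi^y$, and the care about choosing the chart radius $b$ before $M$ and $\ep_0$, supplies exactly the bookkeeping the paper leaves to the reader.
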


Let
\begin{eqnarray*}
&&P^u(t_0,\ep)=(\Psi^d(x^u(t_0,\ep),t_0,\ep),x^u(t_0,\ep),x^{cs}(t_0,\ep),\ep y(t_0,\ep))+x_0,\\
&&P^{cs}(t_0,\ep)=(\Upsilon^d(x^{cs}(t_0,\ep),y(t_0,\ep),t_0,\ep),x^u(t_0,\ep),x^{cs}(t_0,\ep),\ep y(t_0,\ep))+x_0\\
&&(x_{-}(t),y_{-}(t))\triangleq\Phi(t,t_0,P^u(t_0,\ep),\ep) \qquad
(x_{+}(t),y_{+}(t))\triangleq\Phi(t,t_0,P^{cs}(t_0,\ep),\ep).
\end{eqnarray*}
From the coordinate system we constructed in the previous
subsection, clearly, the center-stable and unstable manifolds intersect if
\begin{equation} \label{E:H}
P^u=P^{cs}\Longleftrightarrow
\Psi^d(x_{-},t_0,\ep)=\Upsilon^d(x_{+},y_{+},t_0,\ep)
\Longleftrightarrow H(P^u)=H(P^{cs}),
\end{equation}
where \eqref{eq6.11} is used.

\noindent {\bf Melnikov method.} From \eqref{eq6.1}, we have
\begin{align}
H(P^u)=H(P^u)-H(0) =&\int_{-\infty}^{t_0}DH\big(\Phi(t,t_0,P^u,\ep)\big)
\p_t {\Phi}(t,t_0,P^u, \ep) dt \notag\\
=&\int_{-\infty}^{t_0}DH\big(x_{-}(t)\big)\big(f (x_{-}(t),y_{-}(t),t,\ep)
-f_0(x_{-}(t))\big)dt \label{eq6.32}
\end{align}
where the last equality follows from the fact that $H$ is invariant under \eqref{eq3.2} which implies, for any
$x\in X_1$, $DH(x)(Ax+f_0(x))=0$.

We claim for all $t\leq t_0$ and $\max\{a_1',0\}<\eta'<a_2'$,
\begin{equation}\label{eq6.33}
\big|x_{-}(t)-x_h(t-t_0)\big|_{X_1}+\big|y_{-}(t)\big|_{Y_1}\leq C'\ep e^{\eta' (t-t_0)}.
\end{equation}
In fact, from Lemma \ref{lemma6.2}--\ref{lemma6.4} and Theorem \ref{thm3.1}, this inequality is obvious
for $t \in [t_2+t_0, t_0]$. For $t\le t_0 + t_2$, since $(x_{-}, y_{-})$ and $x_h$ remain in a small
neighborhood of the perturbed and unperturbed unstable manifold, respectively. From \eqref{eq4.61} (and
Remark \ref{R:other}) and the standard Lipschitz dependence of the unstable orbits on the initial base
points in terms of the exponentially weighted norm (obtained from the uniform contraction mapping principle),
\eqref{eq6.33} follows.
\[
H(P^u) = \int_{-\infty}^{t_0} DH\big(x_h(t-t_0)\big)\big(D_yf y_{-}(t) +\ep\partial_{\ep}f\big) dt+O(\ep^2)
\]
where $D_y f$ and $\p_\ep f$ are both evaluated at $(x_h(t-t_0),0,t,0)$ and the $C^2$ smoothness of $H$ and
$DH(0)=0$ are used to guarantee the convergence of the above integral. To estimate the $y_-(t)$ term, we write
the variation of parameter formula,
\[
y_{-}(t)=e^{(t-t_0)\frac{J}{\ep}}y(t_0)+\int_{t_0}^te^{(t-\tau)\frac{J}{\ep}}
g(x_{-},y_{-},\tau,\ep)d\tau.
\]
On the one hand, integrating by parts and using \eqref{eq6.33}, we obtain
\[
\Big|\int_{-\infty}^{t_0} DH\big(x_h(t-t_0)\big)D_yf(x_h(t-t_0),0,t,0) e^{(t-t_0)\frac{J}{\ep}}
y(t_0,\ep)dt\Big|=O(\ep^2).
\]
On the other hand, we compute by changing the integration order, integrating by parts, and using the
exponential bounds of the orbits in $t$,
\begin{eqnarray*}
&&\int_{-\infty}^{t_0}D
H(x_h(t-t_0))D_yf(x_h(t-t_0),0,t,0)
\Big(\int_{t_0}^te^{(t-\tau)\frac{J}{\ep}}g(x_{-}(\tau),y_{-}(\tau),\tau,\ep)d\tau\Big)dt\\
&=&-\int_{-\infty}^{t_0}\Big(\int_{-\infty}^{\tau}D
H(x_h)D_yf(x_h,0,t,0)e^{(t-\tau)\frac{J}{\ep}}dt\Big)g(x_{-},y_{-},\tau,\ep)d\tau\\
&=&-\ep \int_{-\infty}^{t_0} DH(x_h)D_yf(x_h,0,\tau,0) J^{-1} g(x_{-},y_{-},\tau,\ep)d\tau\\
&&+\ep \int_{-\infty}^{t_0}\Big(\int_{-\infty}^{\tau}\frac{d}{dt}\big(DH(x_h) D_yf(x_h,0,t,0)\big)
J^{-1}e^{(t-\tau)\frac{J}{\ep}}dt\Big)g(x_{-},y_{-},\tau,\ep)d\tau
\end{eqnarray*}
It's easy to see from \eqref{eq6.33}
\begin{eqnarray*}
&& \int_{-\infty}^{t_0} DH(x_h)D_yf(x_h,0,t,0) J^{-1} g(x_{-},y_{-},\tau,\ep)d\tau\\
&=&-\int_{-\infty}^{t_0} DH(x_h(t-t_0))D_yf(x_h(t-t_0),0,t,0)J^{-1}g_0(x_h(t-t_0)))dt+O(\ep^2).
\end{eqnarray*}
Again, integrating by parts on $e^{(t-\tau) \frac J\ep}$ and using the assumption $Ax_h \in X_1$, one may
compute
\[\begin{aligned}
&\int_{-\infty}^{t_0}\Big(\int_{-\infty}^{\tau}\frac{d}{dt}\big(D
H(x_h)D_yf(x_h,0,t,0)\big) J^{-1}e^{(t-\tau)\frac{J}{\ep}}dt\Big)g(x_{-},y_{-},\tau,\ep)d\tau
=O(\ep).
\end{aligned}\]
Summarizing all the estimates, we obtain
\begin{equation}\label{eq6.34}
H(P^u)= \ep\int_{-\infty}^0 \omega(t, t_0) dt +O(\ep^2),
\end{equation}
where
\begin{equation} \label{E:omega}
\omega(t,t_0)= D H(x_h(t)) \big(\p_{\ep}f  -D_yf J^{-1}g \big) (x_h(t),0,t+t_0,0)\Big).
\end{equation}

Even though $\Phi(t,t_0,P^{cs}(t_0,\ep),\ep)$ does not necessarily stay in a small neighborhood of the origin
for all $t>>0$, we will still obtain a similar approximate for $H(P^{cs})$. In fact, by the same argument
leading to \eqref{eq6.33}, one can show that an inequality similar to \eqref{eq6.33} holds for $t \in [t_0,
T]$ as long as $(x_+(t), y_+(t))$ stay in the $r$ neighborhood of $0$ for all $t \in [t_0+t_1, T]$, where $r$
is the cut-off radius in the construction of the center-stable manifold. Therefore, let $a=\frac{1}{a_1-\eta'}
<0$ and $T_1=a\log\ep>0$, we have, for any $t\in[t_0,T_1+t_0]$,
\begin{equation}\label{eq6.35}
\Big|(x_+(t), y_+(t))-x_h(t-t_0)\Big|_{X_1 \times Y_1}\leq C' e^{\eta' (t-t_0)}\ep,
\end{equation}
where $C'$ is independent of $t$ and $\ep$. In particular,
\begin{equation}\label{eq6.36}
\begin{aligned}
&|(x_+, y_+)(T_1+t_0) -x_0(T_1)|_{X_1\times Y_1}\leq C'\ep^{1+a\eta'},
\qquad |x_h(T_1)|_{X_1}\leq C'\ep^{aa_1}
\end{aligned} \end{equation}
and
\begin{equation}\label{eq6.37}
aa_1=1+a\eta'>\frac{1}{2} \ , \ 1+2a\eta'>0.
\end{equation}
Since $H(0)=0$, we can compute
\begin{eqnarray*}
H(P^{cs})-\ep\int_{+\infty}^{0} \omega(t,t_0)dt = &&H(x_+(T_1+t_0)) -\ep \int_{+\infty}^{T_1}
\omega(t,t_0)dt\\
&& + H(P^{cs})-H(x_+(T_1+t_0)) -\ep\int_{T_1}^{0}\omega(t,t_0)dt.
\end{eqnarray*}
Using \eqref{eq6.36} and a similar procedure as in the approximation
of $H(P^u)$, we obtain
\begin{eqnarray*}
&&\Big|H(P^{cs})-H(x_+(T_1+t_0))-
\ep\int_{T_1+t_0}^0\omega(t,t_0)dt\Big|\leq C'\ep^{2+2a\eta'},\\
&&\Big|\ep\int_{+\infty}^{T_1+t_0}\omega(t,t_0)dt\Big|\leq C'\ep^{1+aa_1},\\
&&\Big|H(x_+(T_1+t_0))\Big|\leq|D^2H|_{C^0}|x_+(T_1+t_0)|^2\leq C'\ep^{2+2a\eta'}.
\end{eqnarray*}
Therefore
\begin{equation}\label{eq6.38}
H(P^u)-H(P^{cs}(\ep))=\ep M(t_0)+ O(\ep^{2+2a\eta'}), \qquad  M(t_0)=\int_{-\infty}^{+\infty}\omega (t, t_0)
dt.
\end{equation}
By Lemma \ref{lemma6.4}, \eqref{E:H}, and an Intermediate Value Theorem argument, we obtain

\begin{lemma}\label{lemma6.5}
Suppose $M(t_0)$ has a simple zero at some $t_0$, then there exists $\ep_0$ such that for each
$\ep\in[0,\ep_0)$, there exists $t^*$ such that $\CM_\ep^{cs}(t^*)$ and $\CM_\ep^u(t^*)$ intersects near
$x_0=x_h(0)$.
\end{lemma}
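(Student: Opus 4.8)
The plan is to exploit the fact that the coordinate system of Subsection~\ref{SS:coord}, together with the equivalence \eqref{E:H}, has already collapsed the a priori infinite-codimensional problem of intersecting $\wt{\mathcal{M}}_\ep^{cs}(t_0)$ and $\wt{\mathcal{M}}_\ep^u(t_0)$ inside $x_0+\Sigma$ down to a single scalar equation. By Lemma~\ref{lemma6.4}, the first three equations in \eqref{eq6.29} always admit a solution $(x^{cs},x^u,y)=(x^{cs}(t_0,\ep),x^u(t_0,\ep),y(t_0,\ep))$, continuous in $(t_0,\ep)$ and of size $O(\ep)$; for that solution the two manifolds meet exactly when the remaining relation \eqref{eq6.30} holds, i.e. by \eqref{E:H} when $H(P^u(t_0,\ep))=H(P^{cs}(t_0,\ep))$. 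So it suffices to find, for each small $\ep$, a time $t^*$ solving this scalar equation; the continuity of $P^{u}$ and $P^{cs}$ in $t_0$ (inherited from Lemma~\ref{lemma6.4} and the $t_0$-continuity recorded in \eqref{eq6.14} and \eqref{eq6.28}) then upgrades it to a genuine intersection point, which by the same lemma lies within $O(\ep)$ of $x_0$.

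First I would introduce the reduced bifurcation function $\mathcal{D}(t_0,\ep)=\ep^{-1}\big(H(P^u(t_0,\ep))-H(P^{cs}(t_0,\ep))\big)$. The expansion \eqref{eq6.38} gives $\mathcal{D}(t_0,\ep)=M(t_0)+O(\ep^{1+2a\eta'})$, and by \eqref{eq6.37} the exponent $1+2a\eta'$ is strictly positive, so the remainder tends to $0$ with $\ep$; moreover it does so uniformly for $t_0$ in any bounded interval, since every estimate feeding into \eqref{eq6.38} (Lemmas~\ref{lemma6.1}--\ref{lemma6.4}, \eqref{eq6.33}, \eqref{eq6.35}) is uniform in $t_0$. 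I would also note that $\mathcal{D}(\cdot,\ep)$ and $M$ are continuous --- for $M$ this follows from continuity of $\omega$ in \eqref{E:omega} and uniform convergence of the defining integral, using the exponential bounds on $x_h$ from (D3) together with $DH(0)=0$.

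Then I would run an intermediate value argument. Let $\bar t_0$ denote the simple zero of $M$; then $M$ changes sign there, so there is $\delta>0$ with $M(\bar t_0-\delta)$ and $M(\bar t_0+\delta)$ of opposite sign, say $M(\bar t_0-\delta)<0<M(\bar t_0+\delta)$, and $m\triangleq\min\{|M(\bar t_0-\delta)|,|M(\bar t_0+\delta)|\}>0$. Choosing $\ep_0$ small enough that the remainder from the previous step stays below $m$ on $[\bar t_0-\delta,\bar t_0+\delta]$ for all $\ep\in[0,\ep_0)$, we obtain $\mathcal{D}(\bar t_0-\delta,\ep)<0<\mathcal{D}(\bar t_0+\delta,\ep)$, so by continuity $\mathcal{D}(t^*,\ep)=0$ for some $t^*\in(\bar t_0-\delta,\bar t_0+\delta)$. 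Unwinding, $H(P^u(t^*,\ep))=H(P^{cs}(t^*,\ep))$, hence by \eqref{E:H} the point $P^u(t^*,\ep)=P^{cs}(t^*,\ep)$ lies in $\wt{\mathcal{M}}_\ep^{cs}(t^*)\cap\wt{\mathcal{M}}_\ep^u(t^*)$ near $x_0$.

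The genuinely hard work --- reducing the infinite-dimensional intersection problem to the one scalar equation \eqref{E:H} with all constants uniform in the singular parameter, and extracting the Melnikov expansion \eqref{eq6.38} --- has already been carried out in the coordinate construction and in the computation of $H(P^{u})$, $H(P^{cs})$. So the only things to be careful about in the present step are the uniformity in $t_0$ of the $O(\ep^{2+2a\eta'})$ remainder and the joint continuity of $\mathcal{D}$ in $(t_0,\ep)$, both of which are inherited directly from Lemma~\ref{lemma6.4} and the estimates \eqref{eq6.14} and \eqref{eq6.28}; no further obstacle is expected.
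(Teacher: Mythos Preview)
Your proposal is correct and follows essentially the same approach as the paper: the paper's proof is the single sentence ``By Lemma~\ref{lemma6.4}, \eqref{E:H}, and an Intermediate Value Theorem argument, we obtain'' Lemma~\ref{lemma6.5}, and your argument is precisely a careful elaboration of that intermediate value step applied to $\ep^{-1}\big(H(P^u)-H(P^{cs})\big)=M(t_0)+O(\ep^{1+2a\eta'})$ from \eqref{eq6.38}.
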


\begin{rem} \label{R:Melnikov}
Compare the Melnikov functional obtained in the above with the one
under regular perturbations, we observe an extra term $-D_yf J^{-1}
g_0$. In fact, it is easy to see where this term comes from the
coordinate change as in \eqref{E:y_1}. Let
\[
x_1 = x, \qquad y_1 = y + \ep J^{-1} g_0 (x).
\]
Then the $y$ equation takes the form of \eqref{E:doty_1} where $g_1 = O(\ep)$. From Proposition \ref{P:appro}
and \ref{thm4.20} and their remarks, it is easy to prove that the contribution from the $y$ equation to the
invariant manifolds are of order $O(\ep^2)$ and it does not appear in the leading Melnikov functional. The
$x$ equation now takes the form of
\[
\dot x_1 = A x_1 + f(x_1, y_1 - \ep J^{-1} g_0 (x_1), t, \ep).
\]
Ignoring $y_1$, the Melnikov functional of this regularly perturbed equation is exactly the one obtained in
Lemma \ref{lemma6.5}. The only reason we did not take this approach is that thus transformation reduces the
smoothness of the system by 1 order which would require $k\ge 3$ in the assumptions.
\end{rem}

\noindent {\bf Homoclinic solution.} Lemma \ref{lemma6.5} gives a condition for nonempty intersection of
center-stable and unstable manifold. This intersection means the
existence of a solution which converges to the steady solution as
$t\longrightarrow-\infty$. As $t$ increases and $t\leq
a\log{\ep}+t_0$, based on the stable foliation in the center-stable
manifold, this solution will approach a neighborhood of the steady
state inside the center manifold. In order to find conditions for this solution to converges to the steady
state as $t \to \infty$, in this subsection, we focus on the case when the unperturbed center manifold is at
least neutral and the perturbation is weakly dissipative so that the perturbed center manifold is weakly stable.
In this case, the size of the basin of attraction of $0$ on the center manifold is the key issue. We
{\it illustrate} how the method in the regular perturbation cases can be adapted here under certain assumptions,
which are not optimal as we are only giving an illustration. In order to specify the assumptions, we first look
at the Taylor's expansions of $f$ and $g$,
\[\begin{aligned}
f(x,y,t,\ep)&=f(x,y,t,0)+\ep f_2(x,y,t,\ep) =f_xx+f_yy+f_1(x,y)+\ep f_2(x,y,t,\ep),\\
g(x,y,t,\ep)&=g(x,y,t,0)+\ep g_2(x,y,t,\ep) =g_xx+g_yy+g_1(x,y)+\ep g_2(x,y,t,\ep),
\end{aligned}\]
where $f_{x,y}=D_{x,y}f(0,0, t, 0)$ and $g_{x,y}=D_{x,y}g(0,0,t,0)$ which are independent of $t$.
Let $P_{c,su}$ be linear projections from $X_1$ onto $X_1^{c,su}$
which are invariant under $e^{t(A+f_x)}$, where $X_1=X_1^c\oplus
X_1^{su}$. For any $x\in X_1$, we denote $x_c=P_cX_1$ and
$x_{su}=P_{su}x$. In addition, we assume
\begin{enumerate}
\item[(E1)] $\dim{X_1^s}<+\infty$ and $(f,g)$ are $C^3$ in $(x,y)$ with upper bound uniform in $t$.
\item[(E2)] Let $A(\ep)=A+f_x-\ep$ and $\frac{J(\ep)}{\ep}=\frac{J}{\ep}+g_y-\ep$. We further assume
$a_1'\leq0$ in (C2), which implies
\begin{eqnarray*}
|e^{tA(\ep)}|_{L(X_1^c)} + |e^{t\frac{J(\ep)}{\ep}}y|_{L(Y_1)} &\leq& Ke^{-\ep
t}\quad\mbox{for}\quad t\geq0,\quad x_c\in X_1^c.
\end{eqnarray*}
\item[(E3)] For $(x_c,x_{su},y,t,\ep)\in X_1^c\times X_1^{su}\times
Y_1\times\mathbb{R}\times[0,\ep_0)$,
\[\begin{aligned}
&P_cf_y=0\ , \ D_{(x_c,y)}^2P_cf_1(0,0,0)=0,\\
&P_cf_2(x_c,x_{su},y,t,\ep)=-x_c+\ep
B_0(\ep)(x_c,x_{su},y)+B_1(x_c,x_{su},y,t,\ep),\\
&B_0(\ep)\ \mbox{is a bounded linear operator acting on}\
(x_c,x_{su},y),\\
&B_1(0,0,0,t,\ep)=0\ , \ DB_1(0,0,0,t,\ep)=0.
\end{aligned}\]
\item[(E4)] For $(x_c,x_{su},y,t,\ep)\in X_1^c\times X_1^{su}\times
Y_1\times\mathbb{R}\times[0,\ep_0)$,
\[\begin{aligned}
&g_x=0\ , \ g_1(0,0,0)=0\ ,\ D_{(x_c,y)}^2g_1(0,0,0)=0,\\
&g_2(x_c,x_{su},y,t,\ep)=-y+\ep
B_2(\ep)(x_c,x_{su},y)+B_3(x_c,x_{su},y,t,\ep),\\
&B_2(\ep)\ \mbox{is a bounded linear operator acting on}\
(x_c,x_{su},y),\\
&B_3(0,0,0,t,\ep)=0\ , \ DB_3(0,0,0,t,\ep)=0.
\end{aligned}\]
\end{enumerate}

\begin{rem}\label{rem6.6}
It looks that the above assumptions are too restrictive. However, one should first try
to `diagonalize' the linear part to remove $f_y$ and $g_x$. (As a
separate topic, we will discuss this transformation in the Appendix.)
With this `diagonalized' linear part, one is in a position to carry
out a normal form transformation to eliminate some quadratic terms.
Assumption (E) should be considered for the form after performing a
normal form transformation.
\end{rem}

For sufficiently small $r$, from Theorem \ref{thm4.7}, for $\ep \in [0, \ep_0)$ and $t_0 \in \R$, there
exists a local center manifold $\mathcal{M}_{\ep}^c(t_0)$ which contains as an open subset the graph of
\[
h_{su} = (\Psi_u,\Psi_s) : B_{r}(X_1^c)\times B_{r}(Y_1)\times [0,\ep_0) \to X_1^s\times X_1^u
\]
where $h_{su} (\cdot, \cdot, 0)$ is understood as independent of $y$. Moreover, from Section \ref{S:InMa},
$h_{su}$ is uniformly bounded in $C^2$ in $(x_c,y)$ and Proposition \ref{P:appro} and Remark \ref{R:appro}
imply
\begin{equation}\label{eq6.39}\begin{aligned}
&h_{su}(0,0,\ep)=0 \ , \ |Dh_{su}(x_c,y,\ep)|\leq C'(\ep+|x_c|+|y|)\\
&|h_{su}(x_c,y,\ep)| \leq C'(\ep+|x_c|+|y|)(|x_c+|y|) \leq r,\\
&\big\{x_c+y+h_{su}(x_c,y,\ep)\big\}=\mathcal{M}_{\ep}^c(t_0) \cap\big(B_{r}(X_1^s)\times B_{r}(X_1^u)
\times B_r(X_1^c)\times B_{r}(Y_1)\big),
\end{aligned}\end{equation}
where $C'$ depends on those constants in assumptions. Here the assumptions
that $X^u$ and $X^s$ are finite dimensional are used. On the center manifold,
the flow is reduced to the $x_c$ and $y$ direction only, where the
solutions are given in the form of
\begin{equation}\label{eq6.40} \begin{aligned}
x_c(t)=&e^{(t-t_{\star})A(\ep)}x_c(t_{\star})+\int_{t_{\star}}^{t}e^{(t-\tau)A(\ep)}
\wt{f}(x_c,y,\tau,\ep)d\tau,\\
y(t)=&e^{(t-t_{\star})\frac{J(\ep)}{\ep}}y(t_{\star})+\int_{t_{\star}}^{t}e^{(t-\tau)\frac{J(\ep)}{\ep}}
\wt{g}(x_c,y,\tau,\ep)d\tau,
\end{aligned} \end{equation}
where
\[\begin{aligned}
&\wt{f}(x_c,y,t,\ep)=P_c(f_1+\ep f_2)(x_c+h_{su}(x_c,y,\ep),y,t,\ep),\\
&\wt{g}(x_c,y,t,\ep)=(g_1+\ep g_2)(x_c+h_{su}(x_c,y,\ep),y,t,\ep).
\end{aligned}\]
From assumptions (E1)---(E4) and \eqref{eq6.39}, $(\wt{f},\wt{g})$ satisfies
\[
\wt{f}(0,0,t,\ep)=0\ , \ \wt{g}(0,0,t,\ep)=0\ ,\ |D\wt{f}|+|D\wt{g}|\leq C'(\ep^2+|x_c|^2+|y|^2).\]
Suppose we have solution $(x_c(t),y(t))$ such that
$|x_c(t_{\star})|+|y(t_{\star})|\leq \delta\ep^{\frac{1}{2}}$ and
$|x_c(t)|+|y(t)|\leq(2+K)\delta\ep^{\frac{1}{2}}$ for
$t\in[t_{\star},T']$ for some $T'$. By using Gronwall's inequality,
we obtain from \eqref{eq6.40}
\[
e^{\ep (t-t_{\star})}(|x_c(t)|+|y(t)|)\leq K\delta\ep^{\frac{1}{2}}e^{C'\delta^2\ep(t-t_{\star})}.
\]
Since $C'$ is independent of $t$ and $\delta$, by taking $\delta< \frac 1{\sqrt{2C'}}$, we can extend $T'$
to $+\infty$ and derive
\[
|x_c(t)|+|y(t)| \leq K\delta\ep^{\frac{1}{2}}e^{-\frac 12 \ep(t-t_{\star})}.
\]
Therefore the basin of attraction on the center manifold, and thus on the center-stable manifold
as well, contains the graph of $h_{su}$ over the ball $B_{\delta \ep^{\frac 12}} (X_1^c \times Y_1)$.

Recall that $(x_{+},y_{+})$ denote the solution with initial time $t^*$ we obtained in Lemma \ref{lemma6.5}
which satisfies
\[
(x_{+}(t^*),y_{+}(t^*))\in\mathcal{M}_{\ep}^{cs}(t^*)\cap\mathcal{M}_{\ep}^u(t^*).
\]
In particular, for sufficiently small $\delta$ and by choosing $t_{\star}=T_1+t^*$, \eqref{eq6.36} implies for $t>T_1+t^*$
\[
|P_cx^{+}(t)|+|y^{+}(t)|\leq K\delta\ep^{\frac{1}{2}}e^{-\frac 12 \ep (t-T_1-t^*)} \to 0 \text{ as }
t\to +\infty.
\]

\begin{theorem}\label{thm6.7}
Assume (A1)---(A4), (A5$'$), (A7) for $k=2$, (B1)---(B5), (C1)---(C2), (D1)---(D5) and (E1)---(E4).
Suppose the Melnikov function $M(t_0)$ has simple zero points, then there exists $\ep_0>0$ such that for any
$\ep\in[0,\ep_0)$, \eqref{eq3.1} has homoclinic solutions to the origin.
\end{theorem}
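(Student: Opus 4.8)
The plan is to produce, for each simple zero of the Melnikov function $M$, a single orbit of \eqref{eq3.1} that converges to the origin as $t\to\pm\infty$. Fix such a zero and let $t^{*}$ and $p\in\CM_{\ep}^{cs}(t^{*})\cap\CM_{\ep}^{u}(t^{*})$ be the time and the intersection point furnished by Lemma \ref{lemma6.5}; since the intersection occurs exactly where $H(P^{u})=H(P^{cs})$ in \eqref{E:H}, we have $p=P^{u}(t^{*},\ep)=P^{cs}(t^{*},\ep)$, so the orbit $(x_{+}(t),y_{+}(t))=\Phi(t,t^{*},p,\ep)$ coincides with $(x_{-}(t),y_{-}(t))$. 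It suffices to show this orbit tends to $(0,0)$ in both time directions.

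Backward in time, $p$ lies on the global unstable manifold $\CM_{\ep}^{u}(t^{*})$, on which the backward flow is well-defined (Theorem \ref{thm4.6}) and which is asymptotic to $0$; the orbit eventually re-enters the $\frac{r}{3}$-neighborhood of $0$ where the cut-off is inactive, so it is a genuine solution of \eqref{eq3.1}. Combining \eqref{eq6.33}, which bounds $|x_{-}(t)-x_{h}(t-t^{*})|_{X_{1}}+|y_{-}(t)|_{Y_{1}}$ by $C'\ep e^{\eta'(t-t^{*})}$, with $|x_{h}(s)|_{X_{1}}\le Ce^{a_{2}'s}$ for $s\le0$ from (D3) and $\eta',a_{2}'>0$, we get $|x_{+}(t)|_{X_{1}}+|y_{+}(t)|_{Y_{1}}\to0$ as $t\to-\infty$.

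Forward in time, the orbit stays on $\CM_{\ep}^{cs}$ and, by the shadowing estimates \eqref{eq6.35}--\eqref{eq6.36}, at time $t_{\star}=T_{1}+t^{*}$ (with $T_{1}=a\log\ep$, $a=\frac{1}{a_{1}-\eta'}<0$) it lies within $C'\ep^{aa_{1}}$ of $0$, where $aa_{1}=1+a\eta'>\frac{1}{2}$ by \eqref{eq6.37}; in particular it lies in the local center-stable manifold and in $B_{\delta\ep^{1/2}}$ for $\ep$ small. By the stable foliation of $\CM_{\ep}^{cs}$ from Section \ref{S:InFo}, the orbit lies on a stable fibre over a solution $z_{c}(t)$ of the reduced flow on $\CM_{\ep}^{c}$, and the $C^{1}$ fibre projection places $z_{c}(t_{\star})$ in $B_{\delta\ep^{1/2}}(X_{1}^{c}\times Y_{1})$. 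The basin-of-attraction estimate established just before the theorem gives $|z_{c}(t)|\le K\delta\ep^{1/2}e^{-\frac{1}{2}\ep(t-t_{\star})}\to0$, while the exponential contraction along stable fibres (rate $\eta<0$) gives $|(x_{+},y_{+})(t)-z_{c}(t)|\to0$; hence $(x_{+},y_{+})(t)\to0$ as $t\to+\infty$. Thus $(x_{+},y_{+})$ is a solution of \eqref{eq3.1} homoclinic to the origin, for every $\ep\in[0,\ep_{0})$.

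The genuinely delicate point is the forward half: since the perturbed center manifold is only $O(\ep)$-stable, its basin of attraction shrinks like $\ep^{1/2}$, so one must wait a logarithmically long time $T_{1}\sim a\log\ep$ for the orbit to enter it, and the inequality $aa_{1}=1+a\eta'>\frac{1}{2}$ is precisely what makes the shadowing error $O(\ep^{aa_{1}})$ at time $T_{1}+t^{*}$ small compared with the basin radius $\ep^{1/2}$. Transferring the decay of the reduced center-manifold flow to the full orbit is then handled by the stable fibration of $\CM_{\ep}^{cs}$; all the quantitative ingredients (Lemmas \ref{lemma6.1}--\ref{lemma6.4}, Theorems \ref{thm3.1} and \ref{thm3.2}, Proposition \ref{P:appro}) have already been assembled in the preceding subsections.
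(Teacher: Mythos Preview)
Your proof is correct and follows essentially the same approach as the paper: Lemma \ref{lemma6.5} gives the intersection point, the backward convergence comes from membership in $\CM_\ep^{u}$ together with \eqref{eq6.33}, and the forward convergence is obtained by tracking the orbit until time $T_{1}+t^{*}$ via \eqref{eq6.35}--\eqref{eq6.37} so that it enters the $O(\ep^{1/2})$ basin of attraction on $\CM_\ep^{cs}$ derived from (E1)--(E4). You are slightly more explicit than the paper in invoking the stable foliation of $\CM_\ep^{cs}$ to transfer the center-manifold decay to the full orbit, but this is exactly the mechanism the paper intends when it says the basin of attraction on $\CM_\ep^{c}$ extends to $\CM_\ep^{cs}$.
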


\noindent {\bf Elastic Pendulum revisited.} Finally, we would like to revisit
\eqref{eq1.1}. We let $y=\ep u$ and $\dot{y}=u_1-\ep^2\gamma u$. We
rewrite \eqref{eq1.1} as a first order system.
\begin{equation}\label{eq6.41}
\left\{\begin{aligned}
&\dot{x}=\frac{x_1}{(1+\ep u)^2}\\
&\dot{x}_1=-g(1+\ep u)\sin{x}-2\ep\gamma x_1+\ep F_1(x,\ep u,t,\ep)\\
&\dot{u}=\frac{1}{\ep}u_1-\ep\gamma u\\
&\dot{u}_1=-\frac{1}{\ep}u-\ep\gamma u_1+\frac{x_1^2}{(1+\ep
u)^3}+\ep^3\gamma^2u+ g\cos{x}+\ep F_2(x,\ep u,t,\ep).
\end{aligned}\right.\end{equation}
We assume
\begin{equation} \tag{P1}
\gamma>0\ , \ F_1(\pi,\cdot,t,\ep)\equiv0\ ,\ \partial_tF_2(\pi,\cdot,t,\ep)\equiv0
\end{equation}
where the assumptions on $F$ are for simplicity. By implicit function theorem,
there exists a locally unique steady state $(\pi,0,u^{\ep},u_1^{\ep})$ which satisfies
$(u^{\ep},u_1^{\ep}) = (O(\ep), O(\ep^3))$ and
\begin{equation}\label{eq6.42}
u_1^{\ep}-\ep^2\gamma u^{\ep}=0\ , \ -u^{\ep}-\ep^2\gamma
u_1^{\ep}+\ep^4\gamma^2u^{\ep}-\ep g+\ep^2F_2(\pi,\ep u^{\ep},t,\ep)=0.
\end{equation}
Let $(\wt{x},v,v_1)=(x-\pi,u-u^{\ep},u-u_1^{\ep})$ to translate the steady state to $0$, \eqref{eq6.41} becomes
\begin{equation}\label{eq6.43}
\left\{\begin{aligned}
\dot{\wt{x}}=&x_1+(\frac 1{(1+\ep u^{\ep}+\ep v)^2}-1)x_1\\
\dot{x}_1=&g\sin{\wt{x}}+\ep g(v+u^{\ep})\sin{\wt{x}}-2\ep\gamma x_1+\ep F_1(\wt{x}+\pi,\ep u^{\ep}+\ep v,t,\ep)\\
\dot{v}=&\frac{1}{\ep}v_1-\ep\gamma v\\
\dot{v}_1=&-\frac{1}{\ep}v-\ep\gamma v_1+\frac{x_1^2}{(1+\ep
u^{\ep}+\ep v)^3}+\ep^3\gamma^2v+g(1-\cos{\wt{x}})\\
&+\ep (F_2(\wt{x}+\pi,\ep u^{\ep}+\ep v,t,\ep)-F_2(\pi,\ep
u^{\ep},t,\ep)).
\end{aligned}\right.\end{equation}
We rewrite the right hand side of last equation in \eqref{eq6.43} as
\[
-\frac{1}{\ep}v-\ep\gamma v_1+\ep D_xF_2(\pi,\ep u^{\ep},t,\ep)\wt{x}+\overline{g}(\wt{x},x_1,v,v_1,t,\ep),
\]
where in view of \eqref{eq6.42}
\[\begin{aligned}
\overline{g}(\wt{x},x_1,v,v_1,t,\ep)
\triangleq &\frac{x_1^2}{(1+\ep u^{\ep}+\ep
v)^3}+g(1-\cos{\wt{x}})+\ep^3\gamma^2v+\ep \big(F_2(\wt{x}+\pi,\ep
u^{\ep}\\
&+\ep v,t,\ep)-F_2(\pi,\ep u^{\ep},t,\ep)-D_xF_2(\pi,\ep
u^{\ep},t,\ep)\wt{x}\big).
\end{aligned}\]
Clearly, \eqref{eq6.43} does not satisfy assumptions (E1)--(E4) due to the presence of the linear term
$\ep D_xF_2(\pi,\ep u^{\ep},t,\ep)\wt{x}$. We will eliminate this term by a linear coordinate
transformation. A more general procedure of this type of transformation can be found in Appendix \ref{S:dia}.
To simply our notations, let
\[
M_1\triangleq\begin{pmatrix}0 & 1 \\ g & 0\end{pmatrix}\ ,\
M_2\triangleq\begin{pmatrix} -\ep\gamma & \frac{1}{\ep}\\
-\frac{1}{\ep} & -\ep\gamma
\end{pmatrix}= \frac J\ep -\ep \gamma\ , \ M_3\triangleq\begin{pmatrix}0 & 0\\
D_xF_2(\pi,\ep u^{\ep},t,\ep) & 0\end{pmatrix}.
\]
By implicit function theorem there exist $L_1\in L(\mathbb{R}^2,\mathbb{R}^2)$
with $|L_1|\leq C'\ep^2$, where $C'$ depends on constants in
assumptions, such that
\[
JL_1-\ep^2\gamma L_1-\ep L_1M_1+\ep^2M_3=0,
\]
which implies
\[\begin{pmatrix}
I & 0\\
L_1 & I
\end{pmatrix}^{-1}\begin{pmatrix}
M_1 & 0\\
\ep M_3 & M_2
\end{pmatrix}\begin{pmatrix}
I & 0\\
L_1 & I
\end{pmatrix}=\begin{pmatrix}  M_1 & 0\\
0 & M_2 \end{pmatrix}.
\]

When $\ep=0$, $0$ is a hyperbolic fixed point of the first two
equations of \eqref{eq6.43}, thus, all assumptions in (E3) for $f$
are automatically satisfied. It remains only straight forward verifications that (E1)--(E4) are satisfied.
Therefore, one can apply Theorem \ref{thm6.7}.

\subsection{Persistence of homoclinic orbit under conservative
perturbation} \label{SS:cons}

In this subsection, in addition to those assumptions given at the beginning of the section, we further
assume
\begin{enumerate}
\item[(D7)] $X_1^s$ is finite dimensional. Moreover,
\[\begin{aligned}
&\partial_tf(x,y,t,\ep)\equiv0\ , \ \partial_tg(x,y,t,\ep)\equiv0\ , \ \dim
(T_{x_0}\mathcal{M}_0^{s}\bigcap T_{x_0}\mathcal{M}_0^{cu})=1.
\end{aligned}\]
\item[(D8)] There exists a family of invariant quantities $\{H(\cdot, \ep)$ for
\eqref{eq3.1} which, in terms of the Taylor expansion in $u=\frac{y}{\ep}$, takes the form
\[\begin{aligned}
&H (x,\ep u, \ep) =H_0(x,\ep)+H_1(x,\ep)u+H_2(x,\ep)(u,u)+H_3(x,u,\ep), \ ,\ H_0(x,0)=H(x)\\
&H_i\in C^{3-i}(X_1\times\mathbb{R},L_i(Y_1,\mathbb{R}))\ \mbox{for}\ i=0,1,2, \quad H_3\in C^3(X_1\times
Y_1\times\mathbb{R},\mathbb{R}).
\end{aligned}\]
Here with a slight abuse of the notation, we still denote the unperturbed invariant functional by $H(x)$.
Moreover, we assume there exist $c_0, c_2>0,c_1\geq0$, such that for any $\xi_c\in
X_1^c$ and $(x,u)\in B_r(X_1)\times B_b(Y_1)$
\[\begin{aligned}
&H_0(0,\ep)=0 \ , \ DH_{0}(0,\ep)=0\ ,\ D^2H_0(0,0)(\xi_c,\xi_c)\geq c_0|\xi_c|^2,\\
&H_1(0,\ep)=0 \ , \ |DH_1(0,\ep)|\leq c_1\ , \ |H_2(0,\ep)(u,u)|\geq c_2|u|^2,\\
&H_3(x,0,\ep)=0\ , \ D_uH_3(x,0,\ep)=0\ , \ |D^2H_3(x,u,\ep)|\leq
C_0\ep \ , \ \overline{a}\triangleq c_0c_2-c_1^2>0.
\end{aligned}\]
\end{enumerate}
Under these assumptions, one may still compute the Melnikov functional, but mostly it turns out to be
identically zero. Our goal is find the intersection of the center-stable and the center-unstable manifolds.

We first refine the coordinates on the cross section $\Sigma$ defined in Subsection \ref{SS:coord}. Let
\[
\wt{\CM}_{\ep,0}^{cs, cu, s, u} = \CM_{\ep, 0}^{cs, cu, s, u} \cap (x_0 + \Sigma) \quad
\overline{X}_1^{cs, cu, s, u} = T_{x_0} \CM_0^{cs, cu, s, u} \cap \Sigma \quad \overline{X}_1^c=\overline{X}_1^{cs}\cap\overline{X}_1^{cu}.
\]
Much as in Subsection \ref{SS:coord}, we have
\[
\Pi\triangleq \Sigma \cap \big(\ker(DH(x_0) \oplus Y_1\big) = Y_1 \oplus \overline{X}_1^s \oplus
\overline{X}_1^u \oplus \overline{X}_1^c.
\]
Recall we took $\omega \in \Sigma \backslash \Pi$ with $DH(x_0) \omega=1$. Let $Q_{\omega, y, s, u, c}$ be
the projections on $\Sigma$ given by $\Sigma = \R\omega \oplus \Pi$ and the above decomposition. For any $p\in\Sigma+x_0$, its coordinates can be written as
\begin{equation*}
\begin{aligned}
(d,x^s, x^c, y,x^u)=&(Q_{\omega},Q_{s},Q_{c},Q_y,Q_u)(p-x_0) =(DH(x_0),Q_{s},Q_{c},Q_y,Q_u)(p-x_0).
\end{aligned}
\end{equation*}
Similar to that the center-stable and unstable manifolds in $\Sigma$ can be written
as graphs of $\Upsilon$ and $\Psi$ as given in Lemma \ref{lemma6.3}, for any $b>0$, there exist $r>0$ and
$\Upsilon_1 (\cdot, \ep): B_r (\overline{X}_1^{cu}) \times B_b(Y_1) \to X_1^s \times \R$ and $\Psi_1(\cdot, \ep):
B_r (\overline{X}_1^s) \to X_1^{cu} \times Y_1 \times \R$ such that
\[\begin{aligned}
&\Big\{(\Upsilon_1^d+\Upsilon_1^s)(x^c,x^u,y,\ep)+x^c+x^u+\ep
y\Big\}\subset \wt{\mathcal{M}}_{\ep}^{cu},\\
&\Big\{(\Psi_1^d+\Psi_1^y+\Psi_1^{c}+\Psi_1^{u})(x^s,\ep)+x^s\Big\}\subset \wt{\mathcal{M}}_{\ep}^{s},
\end{aligned}\]
and $\Upsilon_1,\Psi_1$ satisfy similar properties as $\Upsilon,\Psi$ in \eqref{eq6.14}, and
\eqref{eq6.28}.

To find the intersection of $\wt{\mathcal{M}}_{\ep}^{cs}$ and $\wt{\mathcal{M}}_{\ep}^{cu}$, we
first try to match all coordinates except the $d$ direction. Given $(x^c, y) \in B_r (\overline{X}_1^c) \times
B_{b}(Y_1)$, by using the Contraction Mapping Theorem and Lemma \ref{lemma6.3}, we obtain a unique pair
$x^{s,u}(x^c,y,\ep)$ such that
\begin{equation}\label{eq6.44}
\begin{aligned}
&\Upsilon^u(x^c,x^s(x^c,y,\ep),y,\ep) = x^u(x^c,y,\ep) \quad   x^s(x^c,y,\ep)= \Upsilon_1^s
(x^c,x^u(x^c,y,\ep),y,\ep)
\end{aligned}\end{equation}
and they satisfy, for some $C'$ independent of $\ep$.
\[\begin{split}
&x^{s,u}(0,\cdot,0)\equiv 0\ , \ D_{x^c} x^{s,u}(0,0,0)=0\ , \ |x^{s,u} (\cdot, \cdot, \ep) - x^{s,u}
(\cdot, \cdot, 0)|_{C^1}\leq C'\ep.
\end{split}\]

Among the above points on $\wt{\mathcal{M}}_{\ep}^{cs}$, next we identify the one on $\wt{\mathcal{M}}_{\ep}^s$.
More precisely, substituting $x^s (x^c, y, \ep)$ into $\Upsilon,\Psi_1$ and using Contraction Mapping Theorem,
Lemma \ref{lemma6.3}, and an inequality for $\Psi_1$ similar to \eqref{eq6.28}, we obtain a unique pair
$(x^c(\ep),y(\ep))$ such that
\[
(x^c(\ep), y(\ep)) = (\Psi_1^c, \frac 1\ep \Psi_1^y) \big(x^s (x^c(\ep),y(\ep),\ep), \ep\big), \qquad
|x^c(\ep)|_{X^1} + |y(\ep)|_{Y_1} \leq C' \ep
\]
which implies
\[\begin{aligned}
&\Upsilon \big(x^c(\ep),x^s(x^c(\ep),y(\ep),\ep),y(\ep),\ep\big)
+x^c(\ep)+x^s(x^c(\ep),y(\ep),\ep)+\ep y(\ep)\\
=&\Psi_1 \big(x^s(x^c(\ep),y(\ep),\ep),\ep)+x^s(x^c(\ep),y(\ep),\ep\big)\in\wt{\mathcal{M}}_{\ep}^s.
\end{aligned}\]
Similarly, there exist $(x^c_1(\ep),y_1(\ep)) = O(\ep)$ satisfying
\[\begin{aligned}
&\Upsilon_1 \big(x_1^c(\ep),x^u(x_1^c(\ep),y_1(\ep),\ep),y_1(\ep),\ep\big)
+x_1^c(\ep)+x^u(x_1^c(\ep),y_1(\ep),\ep)+\ep y_1(\ep)\\
=&\Psi \big(x^u(x_1^c(\ep),y_1(\ep),\ep),\ep\big)+x^u(x_1^c(\ep),y_1(\ep),\ep)\in\wt{\mathcal{M}}_{\ep}^u.
\end{aligned}\]
Let, for $\tau\in[0,1]$
\[\begin{split}
&q(\tau)= (q_c(\tau),q_y(\tau))\triangleq (1-\tau) \big(x^c(\ep), y(\ep) \big) +\tau \big(x_1^c(\ep),
y_1(\ep)\big)\\
& p_\ep^u(\tau) =  \Upsilon_1(q(\tau),x^u(q(\tau),\ep),\ep)+q_c(\tau)+x^u(q(\tau),\ep)+\ep q_y(\tau)
\triangleq x_\ep^u (\tau) +\ep q_y(\tau) \in \CM_\ep^{cu} \\
& p_\ep^s (\tau)= \Upsilon(q(\tau),x^s(q(\tau),\ep),\ep)+q_c(\tau)+x^s(q(\tau),\ep)+\ep q_y(\tau)
\triangleq x_\ep^s (\tau) +\ep q_y(\tau)\in \CM_\ep^{cs}.
\end{split}\]
We will show there exists $\tau_0 \in [0, 1]$ such that $p_\ep^u(\tau_0) = p_\ep^s(\tau_0)$, which is
equivalent to $H(x_\ep^u(\tau_0)) = H(x_\ep^s(\tau_0))$ as we have matched all other coordinates in
\eqref{eq6.44}. Since $DH(x_0)\big|_{\Sigma}\neq0$ and $|y(\ep)_{Y_1}, |y_1(\ep)|_{Y_1}=O(\ep)$, by
assumption (D8), it is clear that $H(x_\ep^u(\tau_0)) = H(x_\ep^s(\tau_0))$, and thus $p_\ep^u(\tau_0)
= p_\ep^s(\tau_0)$, if and only if $H(p_\ep^u(\tau_0), \ep) = H(p_\ep^s(\tau_0), \ep)$. Let
\[
h(\tau)= H (p_\ep^s(\tau), \ep) - H(p_\ep^u(\tau), \ep).
\]

To analyze $h(\tau)$, note (D8) implies $H_{\ep}(0,0)=0 \ , \ DH_{\ep}(0,0)=0$. For any
\[
z=\xi_c+\ep u+h_{su}(\xi_c, \ep u,\ep) \triangleq x+\ep u \in \big(B_r(X_1)\times B_{\ep b}(Y_1)\big) \cap
\CM_\ep^c,
\]
by using \eqref{eq6.39}, we have
\[
|h_{su}|_{X_1} \le C' \big (|\xi_c|_{X_1}( \ep + |\xi_c|_{X_1}) + \ep^2 |u|_{Y_1} (1+ |u|_{Y_1})\big)
\]
and thus
\[\begin{aligned}
&H_0(x,\ep)\geq \big(\frac{c_0}2 - C'(\ep + |\xi_c|_{X_1}) \big) |\xi_c|_{X_1}^2-C'\ep^3|u|_{Y_1}^2,
\end{aligned}\]
where $C'$ depends on the constants in the assumptions and $b$. Moreover,
\[\begin{aligned}
&|H_1(x,\ep)u|\leq c_1 \big(1+C'(\ep + |\xi_c|_{X_1}) \big) |\xi_c|_{X_1} |u|_{Y_1} + C'\ep^2 |u|_{Y_1}^2,\\
&H_2(x,\ep)(u,u)\geq \big(c_2-C'(\ep^2 |u|_{Y_1} + |\xi_c|_{X_1})\big)|u|_{Y_1}^2, \qquad
|H_3(x,u,\ep)|\leq C_0\ep|u|_{Y_1}^2.
\end{aligned}\]
By the last inequality in (D8) and choosing sufficiently small $r$, there exists $c_*>0$ such that
\[
H(z, \ep)\geq c_*(|\xi_c|^2+|u|_{Y_1}^2).
\]
It implies that $H(\cdot, \ep)>0$ in $\mathcal{M}_{\ep}^c\bigcap
\big(B_{r}(X_1) \times B_{b\ep}(Y_1)\big)$ except at $0$, with
quadratic lower bound (after the scaling $y=\ep u$). Actually, it also implies the origin is stable both
in forward and backward time on the center manifold. Consequently,
$\mathcal{M}_{\ep}^{\alpha}$ are unique, where $\alpha=c,cu,cs$. From the invariance of $H(\cdot, \ep)$,
$H(\cdot, \ep)>0$ in $\mathcal{M}_{\ep}^{cs, cu}\backslash \CM_\ep^{s,u}$.

From the definition of $h$, it is clear $h(1)\geq0\geq h(0)$. By the Intermediate Value Theorem, $h(\tau_0)
=0$ for some $\tau_0 \in [0, 1]$.

\begin{theorem}\label{thm6.8}
Assume (A1)---(A5), (A5$'$) for $k=2$, (B1)---(B5), (C1)---(C2) and
(D1)---(D8). There exists $\ep_0>0$ such that for any
$\ep\in[0,\ep_0)$, the center-stable manifold and center-unstable
manifold of \eqref{eq3.1} has nonempty intersection.
\end{theorem}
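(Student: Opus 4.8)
The plan is to produce the intersection point as a zero of a scalar function built from the conserved quantity $H(\cdot,\ep)$, following the geometric scheme used for regular perturbations in \cite{SZ03}, but now carrying the singular term $\frac J\ep$ through the estimates assembled in Section~\ref{S:homo}. Note first that (D7) makes the system autonomous, so the manifolds $\CM_\ep^{cs}$, $\CM_\ep^{cu}$ carry no $t_0$-dependence. I would begin by fixing the refined coordinates on the cross section $x_0+\Sigma$: the transversality hypotheses (D5) and (D7), together with \eqref{E:DH1} and $\dim X_1^u<\infty$, give the splitting $\Pi=Y_1\oplus\overline X_1^s\oplus\overline X_1^u\oplus\overline X_1^c$, and the local invariant manifold theory of Sections~\ref{S:InMa}--\ref{S:InFo} together with Lemma~\ref{lemma6.3} and its analogues provide graph representations $\Upsilon,\Psi,\Upsilon_1,\Psi_1$ of $\wt{\CM}_\ep^{cs},\wt{\CM}_\ep^u,\wt{\CM}_\ep^{cu},\wt{\CM}_\ep^s$ in $\Sigma$ that are $C^1$-close, to $O(\ep)$, to the unperturbed ones.

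Next I would match, for each parameter $(x^c,y)\in B_r(\overline X_1^c)\times B_b(Y_1)$, all coordinates on the cross section except the $d$-direction. This is a nested contraction/implicit function argument: solving \eqref{eq6.44} for $x^{s,u}(x^c,y,\ep)$ matches the $x^s$ and $x^u$ coordinates of the associated points on $\wt{\CM}_\ep^{cs}$ and $\wt{\CM}_\ep^{cu}$; a second contraction then selects base parameters $(x^c(\ep),y(\ep))$ and $(x_1^c(\ep),y_1(\ep))$, both $O(\ep)$, for which the center-stable point additionally lies on $\wt{\CM}_\ep^s$ and the center-unstable point additionally lies on $\wt{\CM}_\ep^u$. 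The only delicate point is that each of these fixed point maps must be a uniform contraction in $\ep$; this holds because the $\ep$-dependent evolution operator $E$ of Lemma~\ref{lemma6.2} has $E$ and $E^{-1}$ bounded independently of $\ep$ (using $\dim X_1^u<\infty$), and because Lemmas~\ref{lemma6.1}--\ref{lemma6.3} and Proposition~\ref{thm4.20} supply the required $O(\ep)$ bounds. Interpolating linearly between the two base parameters, $q(\tau)=(1-\tau)(x^c(\ep),y(\ep))+\tau(x_1^c(\ep),y_1(\ep))$, yields paths $p_\ep^u(\tau)\in\CM_\ep^{cu}$ and $p_\ep^s(\tau)\in\CM_\ep^{cs}$, $\tau\in[0,1]$, agreeing in every coordinate but $d$, so that $p_\ep^u(\tau)=p_\ep^s(\tau)$ if and only if $H(p_\ep^u(\tau),\ep)=H(p_\ep^s(\tau),\ep)$.

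The heart of the argument is to show that $h(\tau)=H(p_\ep^s(\tau),\ep)-H(p_\ep^u(\tau),\ep)$ changes sign on $[0,1]$. For this I would first prove that, after the rescaling $y=\ep u$, the restriction of $H(\cdot,\ep)$ to the center manifold $\CM_\ep^c$ satisfies a quadratic lower bound $H\ge c_*(|\xi_c|^2+|u|_{Y_1}^2)$ near $0$. This uses the Taylor structure in (D8), the discriminant condition $\overline a=c_0c_2-c_1^2>0$, and the center-manifold estimates \eqref{eq6.39} on $h_{su}$ (in particular $|h_{su}|\le C'(\ep+|\xi_c|+|y|)(|\xi_c|+|y|)$) to control the cross terms, followed by shrinking $r$. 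Positivity of $H$ on $\CM_\ep^c$ forces $0$ to be stable both forward and backward on the center manifold, hence $\CM_\ep^{c},\CM_\ep^{cs},\CM_\ep^{cu}$ are unique, and by invariance of $H(\cdot,\ep)$ and the fiber structure one gets $H(\cdot,\ep)>0$ on $\CM_\ep^{cs}\setminus\CM_\ep^s$ and on $\CM_\ep^{cu}\setminus\CM_\ep^u$, while $H(\cdot,\ep)\equiv 0$ on $\CM_\ep^s$ and $\CM_\ep^u$. At $\tau=0$ the point $p_\ep^s(0)$ lies on $\CM_\ep^s$ and $p_\ep^u(0)$ on $\CM_\ep^{cu}$, so $h(0)\le 0$; at $\tau=1$ the roles are reversed and $h(1)\ge 0$. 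Since $h$ is continuous in $\tau$, the Intermediate Value Theorem gives $\tau_0\in[0,1]$ with $h(\tau_0)=0$, that is $p_\ep^u(\tau_0)=p_\ep^s(\tau_0)\in\CM_\ep^{cs}\cap\CM_\ep^{cu}$, as claimed.

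I expect the main obstacle to be twofold, and in both places the singular nature of the problem is what must be handled. First, ensuring that the nested coordinate-matching contractions are uniform in $\ep$: this rests entirely on the uniform-in-$\ep$ control of the oscillatory evolution operator generated by $\frac J\ep+D_yg_*^\ep$ and on Proposition~\ref{thm4.20}, and one must be careful that no stray factor of $\frac1\ep$ is introduced when differentiating along the cross section. Second, establishing the quadratic positivity of $H$ on the $\ep$-dependent center manifold after the $y=\ep u$ rescaling, where the $h_{su}$ error terms are only $O(\ep+|\xi_c|+|y|)$ in their derivative and must be absorbed without destroying the discriminant estimate $\overline a>0$; this is the step that genuinely uses the conservative structure (D8).
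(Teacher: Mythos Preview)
Your proposal is correct and follows essentially the same route as the paper: set up the refined splitting $\Pi=Y_1\oplus\overline X_1^s\oplus\overline X_1^u\oplus\overline X_1^c$ on the cross section, use the graph representations $\Upsilon,\Psi,\Upsilon_1,\Psi_1$ and a two-stage contraction to match all coordinates but $d$, interpolate the base parameters linearly to define $p_\ep^{s}(\tau),p_\ep^{u}(\tau)$, establish the quadratic lower bound for $H(\cdot,\ep)$ on $\CM_\ep^c$ from (D8) and \eqref{eq6.39}, deduce $H>0$ on $\CM_\ep^{cs}\setminus\CM_\ep^{s}$ and $\CM_\ep^{cu}\setminus\CM_\ep^{u}$, and conclude by the Intermediate Value Theorem applied to $h(\tau)$. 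The two obstacles you single out (uniform-in-$\ep$ contractions via Lemmas \ref{lemma6.1}--\ref{lemma6.3} and Proposition \ref{thm4.20}, and the discriminant estimate on the rescaled center manifold) are precisely the points the paper works through.
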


The intersection of the center-stable and center-unstable manifold is generically transversal and forms
a high dimensional tube homoclinic to the center manifold. See \cite{SZ03} for more
discussion in the regular perturbation case.

\noindent {\bf Elastic pendulum revisited.} Assume the elastic pendulum system \eqref{eq1.1} is
conservative, i.e. $\gamma=0$ and the perturbation $(\ep F_1, \ep F_2)$ comes from a small perturbation
$\ep G(x, y, \ep)$ to the potential energy. System \eqref{eq1.1} becomes
\begin{equation}\label{eq6.45}
\left\{\begin{aligned}
&\dot{x}=\frac{x_1}{(1+y)^2} \qquad
\dot{x}_1=-g(1+y)\sin{x}-\ep D_xG(x,y,\ep)\\
&\ep\dot{y}=y_1 \qquad
\ep\dot{y}_1=-y+\frac{\ep^2 x_1^2}{(1+y)^3}+\ep^2 g\cos{x}-\ep^3 D_yG(x,y,\ep).
\end{aligned}\right.\end{equation}
Its energy is given by the sum of the kinetic energy, gravitational, elastic, and perturbational energy
\[
H = \frac 12 (1+y)^2 x_1^2 + \frac {y_1^2}{2\ep^2} + \frac {y^2}{2\ep^2} - g (1+y) \cos x + \ep G(x, y, \ep).
\]
From the Implicit Function Theorem, for each $\ep$, there exist a unique fixed point $(x^{\ep},0,u^{\ep},0)$
with $(x^{\ep},y^{\ep})=(\pi + O(\ep), g\ep^2 + O(\ep^3))$ such that
\begin{equation}\label{eq6.46}
g(1+y^{\ep})\sin{x^{\ep}}+\ep D_xG(x^{\ep},y^{\ep},\ep)= y^{\ep}-\ep^2 g\cos{x}+\ep^3
D_yG(x^{\ep},y^{\ep},\ep)=0,
\end{equation}
Let $\wt{x}=x-x^{\ep},\wt{y}=y-y^{\ep}$, we can rewrite \eqref{eq6.45} as
\[
\left\{\begin{aligned}
\dot{\wt{x}}=&\frac{x_1}{(1+y^{\ep}+\wt{y})^2}\\
\dot{x}_1=&-g(1+y^{\ep}+\wt{y})\sin{(x^{\ep}+\wt{x})}-\ep D_xG(x^{\ep}+\wt{x},y^{\ep}+\wt{y},\ep)\\
\dot{\wt{y}}=&\frac{1}{\ep}y_1\\
\dot{y}_1=&-\frac{1}{\ep}\wt{y}-\frac{1}{\ep}y^{\ep}+\frac{\ep
x_1^2}{(1+y^{\ep}+\wt{y})^3}+\ep g\cos{(x^{\ep}+\wt{x})} -\ep^2 D_yG(x^{\ep}+\wt{x},y^{\ep}+\wt{y},\ep),
\end{aligned}\right.\]
whose invariant energy takes the form
\[\begin{aligned}
H(\wt{x},x_1,v,u_1, \ep) =&\frac{x_1^2}{2(1+ y^{\ep}+\ep
v)^2}+\frac{u_1^2}{2}+\frac{(\frac{y^{\ep}}{\ep}+v)^2}{2}
-\frac{(y^{\ep})^2}{2\ep^2} +\ep\Big(G(x^{\ep}+\wt{x},y^{\ep}+\ep
v,\ep)\\
&-G(x^{\ep},y^{\ep},\ep)\Big) -g\Big((1+y^{\ep}+\ep v)\cos{(x^{\ep}+\wt{x})} -(1+y^{\ep})\cos{x^{\ep}}\Big)
\end{aligned}\]
where $v=\frac{\wt{y}}{\ep}\ ,\ u_1=\frac{y_1}{\ep} = \dot y$. Its Taylor's expansion yields
\[\begin{aligned}
H_0=&\frac{x_1^2}{2(1+y^{\ep})^2}-g(1+y^{\ep})\big(\cos{(x^{\ep}+\wt{x})}-\cos{x^{\ep}}\big)
+\ep\big(G(x^{\ep}+\wt{x},y^{\ep},\ep)-G(x^{\ep},y^{\ep},\ep)\big)\\
H_1=&\big(-\frac{\ep x_1^2}{(1+\ep u^{\ep})^3} +\frac{y^{\ep}}{\ep}-\ep
g\cos{(x^{\ep}+\wt{x})}+\ep^2 D_yG(x^{\ep}+\wt{x},y^{\ep},\ep),\, 0\big),\\
H_2=&\begin{pmatrix}\frac{1}{2}+\frac{\ep^3}{2}D_y^2G(x^{\ep}+\wt{x},y^{\ep},\ep)
+\frac{3\ep^2 x_1^2}{2(1+\ep u^{\ep})^4} & 0\\
0 & \frac{1}{2}\end{pmatrix}.
\end{aligned}\]
One can use \eqref{eq6.46} to verify the above $H_i$, where
$i=0,1,2$, satisfy assumption (D8). Therefore, for $\ep\ll1$, the
center stable manifold and center-unstable manifold of
\eqref{eq6.45} intersect near the unperturbed homoclinic orbit
$x_h(t)$, which generically form a 2-parameter family of solutions
homoclinic to a small neighborhood of the fixed point on the center manifold which is foliated by periodic
orbits corresponding to small amplitude fast oscillations.

\section{Appendix} \label{S:dia}

In the appendix, we outline a procedure to block diagonalize the linearization of
\eqref{eq3.1} as a steady state via a linear transformation. And we will discuss two cases, namely, $A$
is a bounded linear operator on $X$ and $A$ is generator of a semigroup on $X$. This lays the foundation for
further normal form transformations to eliminate some nonlinear terms. Assume
\begin{enumerate}
\item[(B)] For $(t,\ep)\in\mathbb{R}\times[0,\ep_0)$,
\[
(f,g)(0,0,t,\ep)=0\ , \ \partial_t(Df,Dg)(0,0,t,\ep)=0.
\]
\end{enumerate}

The linearization of \eqref{eq3.1} at $0$ is given by the operator
\[
\CA_\ep = \begin{pmatrix} A+D_xf(0,\ep) &
D_yf(0,\ep)\\
D_xg(0,\ep) & \frac{J}{\ep}+D_yg(0,\ep)
\end{pmatrix}.
\]
We look for linear operators $L_1^\ep \in L(X_1, Y)$ and $L_2^\ep\in L(Y_1, X)$ such that their graphs are
invariant under $\CA_\ep$. For simplicity, we write $D_{x,y}(f,g)(0,\ep)$ as $D_{x,y}(f,g)$.
Therefore, $(L_1^{\ep},L_2^{\ep})$ should satisfy the following system:
\begin{equation}\label{eq7.1}
\begin{aligned}
&(J+\ep D_yg)L_1^{\ep}-\ep L_1^{\ep}(A+D_xf+D_yfL_1^{\ep})+\ep D_xg=0,\\
&L_2^{\ep}(\ep D_xgL_2^{\ep}+J+\ep D_yg)-\ep(A+D_xf)L_2^{\ep}-\ep
D_yf=0.
\end{aligned}
\end{equation}
When $A$ is bounded, treating the above as the equation for zero points of a mapping from $L(X,Y_1) \times
L(Y,X)\times\mathbb{R}$ to $L(X,Y)\times L(Y_1,X)$, simply from the Implicit Function Theorem, we obtain

\begin{lemma}\label{lemma7.1}
Assume $A\in L(X,X)$, (A2) and (A3) for $k=1$ and (B). There exists
$\ep_0>0$ such that, for any $\ep\in[0,\ep_0)$, \eqref{eq7.1} has a unique
pair solution of $O(\ep)$:
\[
(L_1^{\ep},L_2^{\ep})\in L(X,Y_1)\times L(Y,X).
\]
\end{lemma}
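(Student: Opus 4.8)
The plan is to recast the nonlinear system \eqref{eq7.1} as two \emph{decoupled} fixed point problems, one for $L_1^\ep$ and one for $L_2^\ep$, and then apply the Implicit Function Theorem (equivalently, a contraction argument with the parameter $\ep$) to each. The key structural point is that, since $A\in L(X,X)$, the graph norm on $X_1$ is equivalent to $|\cdot|_X$, so $X_1=X$ as Banach spaces and every coefficient appearing in \eqref{eq7.1} — namely $A$, $D_xf$, $D_yf$, $D_xg$, $D_yg$, all evaluated at $(0,\ep)$ and hence, by (B), independent of $t$ — is a bounded operator with norm $\le C$ by (A3). The only unbounded operator left is $J$, and (A2) gives $|J^{-1}|_{L(Y,Y_1)}\le C_0$; this is precisely what lets us ``invert'' $J$ and land in the smaller space $Y_1$.

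For the first equation of \eqref{eq7.1}, solving for the $JL_1^\ep$ term gives the equivalent form
\[
L_1^\ep=\ep J^{-1}\Big(L_1^\ep\big(A+D_xf+D_yfL_1^\ep\big)-D_xg-D_ygL_1^\ep\Big)\triangleq \Psi_1(L_1^\ep,\ep).
\]
Since $J^{-1}\in L(Y,Y_1)$ and the bracket is a bounded operator $X\to Y$ whenever $L_1^\ep\in L(X,Y_1)$, the map $\Psi_1(\cdot,\ep)$ sends $L(X,Y_1)$ into itself; moreover $\Psi_1$ is a polynomial in $L_1^\ep$ with coefficients of size $O(\ep)$, so $\Psi_1(0,0)=0$ and $D_{L_1}\Psi_1(0,0)=0$. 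Likewise, for the second equation one divides by $J$ on the right — using that on its natural domain $Y_1$ the identity $(BJ^{-1})J=B$ holds — to obtain
\[
L_2^\ep=\ep\Big((A+D_xf)L_2^\ep+D_yf-L_2^\ep\big(D_xgL_2^\ep+D_yg\big)\Big)J^{-1}\triangleq \Psi_2(L_2^\ep,\ep),
\]
a self-map of $L(Y,X)$ with the same vanishing properties at the origin. Applying the Implicit Function Theorem to $L_i-\Psi_i(L_i,\ep)=0$ at $(L_i,\ep)=(0,0)$ — where the Fréchet derivative in $L_i$ is the identity, hence invertible — yields, for $\ep<\ep_0$ small, a unique solution $L_i^\ep$ in a fixed neighborhood of $0$, depending continuously (in fact real-analytically) on $\ep$; since $L_i^0=0$ this gives $|L_1^\ep|_{L(X,Y_1)}+|L_2^\ep|_{L(Y,X)}\le C\ep$. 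Finally one checks directly that these $L_i^\ep$ solve the original \eqref{eq7.1}, and conversely that any $O(\ep)$ solution of \eqref{eq7.1} is a fixed point of $\Psi_i$ and so coincides with $L_i^\ep$, giving the asserted uniqueness.

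I do not expect a genuine obstacle here: the lemma is essentially a bookkeeping statement, and its proof is ``simply the Implicit Function Theorem'' as announced. The one point requiring care is verifying that all the operator compositions in $\Psi_1,\Psi_2$ are well-defined between the correct spaces, and that the inversion of the unbounded $J$ (on the left in the first equation, on the right in the second) produces the \emph{stronger} regularity $L_1^\ep\in L(X,Y_1)$ claimed in the statement rather than merely $L_1^\ep\in L(X_1,Y)$ — both of which are immediate from (A2) once the equations are written in the fixed point form above.
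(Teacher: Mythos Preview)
Your proposal is correct and is essentially the paper's own argument: the paper applies the Implicit Function Theorem directly to \eqref{eq7.1} viewed as a map $L(X,Y_1)\times L(Y,X)\times\mathbb{R}\to L(X,Y)\times L(Y_1,X)$, whose linearization at $(0,0,0)$ is $(\delta L_1,\delta L_2)\mapsto(J\delta L_1,\delta L_2 J)$, invertible by (A2). Your reformulation simply pre-composes with this inverse to obtain the fixed-point maps $\Psi_1,\Psi_2$, so the two arguments are the same up to this cosmetic rewriting; your additional care in checking that the compositions land in the correct spaces is justified and correct.
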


Let $\begin{pmatrix} \tilde x \\ \tilde y\end{pmatrix} = \begin{pmatrix}
I & L_2^{\ep}\\
L_1^{\ep} & I
\end{pmatrix}^{-1}$ as the new variables. System \eqref{eq3.1} becomes
\begin{equation}\label{eq7.2}
\left\{\begin{aligned}
&\dot{\tilde x}=(A+D_xf+D_yfL_1^{\ep}) \tilde x+\wt{F}(\tilde x, \tilde y,t,\ep)\\
&\dot{\tilde y}=(\frac{J}{\ep}+D_yg+D_xgL_2^{\ep}) \tilde y+\wt{G}(\tilde x,\tilde y,t,\ep).
\end{aligned} \right.
\end{equation}
Using the following embedding of spaces
\[
L(X,Y_1)\subset L(X,Y)\ , \ L(Y,Y_1)\subset L(Y,Y)\ , \ L(Y,Y_1)\subset L(Y_1,Y_1),
\]
it is easy to show $\wt{F}$ and $\wt{G}$ satisfy propertie (A3) and thus
all results in previous sections still hold for the new system \eqref{eq7.2}.

In the case when $A$ is unbounded, we use an integral equation appraoch
to solve \eqref{eq7.1} under the assumption
\begin{enumerate}
\item[(F)] There exist closed subspaces $X^{s,c,u}$ of $X$ such that $X=X^{s}\oplus
X^{c}\oplus X^{u}$ and $A+D_xf(0,\ep)$ is invariant on $X^{s,c,u}$.
Let $A^{u,c,s}=\big(A+D_xf(0,\ep)\big)\big|_{X^{s,c,u}}$. We further assume
$A^c$ is a bounded linear operator on $X^c$ and there exist
$\omega_{s}<0$ and $\omega_{u}>0$ such that
\[|e^{tA^{s}}|\leq
Ke^{\omega_{s}t}\ \ \mbox{for} \ \ t\geq0\ , \ |e^{tA^{u}}|\leq
Ke^{\omega_{u}t}\ \ \mbox{for} \ \ t\leq0.\]
\end{enumerate}
To find $L_1^{\ep}$, we consider
\begin{eqnarray}
\label{eq7.3}
G^{u}(L) x^u &=&\int_{+\infty}^0e^{tJ}(\ep LD_yfL^{u}-\ep D_xg-\ep D_ygL^{u})e^{-\ep tA^{u}}x^u dt, \; x^u
\in X^u\\
\label{eq7.4} G^c(L) x^c&=&\ep(J+\ep D_yg)^{-1}(LD_yfL^c-D_xg+L^cA^c) x^c, \; x^c \in X^c\\
\label{eq7.5} G^{s}(L) x^s&=&\int_{-\infty}^0e^{tJ}(\ep LD_yfL^{s}-\ep D_xg-\ep D_ygL^{s})e^{-\ep tA^{s}} x^s dt,
\; x^s \in X^s.
\end{eqnarray}
where $L^{u, c, s} = L P_{u, c, s}$ and $P_{u,c,s}$ are the projections defined by the decomposition.
Let $\overline{G}(L)=G^{u}(L)P_u + G^c(L)P_c + G^{s}(L)P_s$.

\begin{lemma}\label{lemma7.2}
If \eqref{eq7.6} below is satisfied, then $\overline{G}$
is a contraction from a bounded ball in $L(X,Y)$ to itself and its unique fixed point
$L$ satisfies the first equation of \eqref{eq7.1} and
\[
|L|_{L(X,Y)}\leq C'\ , \ |L|_{L(X_1,Y_1)}\leq C'\ep.
\]
\end{lemma}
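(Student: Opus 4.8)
The plan is to run a Banach fixed-point argument for $\overline{G}$ on a ball of $L(X,Y)$ — the integral-equation substitute for the Implicit Function Theorem of Lemma \ref{lemma7.1}, forced on us because $LA$ makes no sense for unbounded $A$ and $L\in L(X,Y)$ — and then to bootstrap the fixed point to $L(X_1,Y_1)$ with an extra power of $\ep$. Write $N(L)=LD_yfL-D_ygL-D_xg$, so that on each invariant sector the first equation of \eqref{eq7.1}, divided by $\ep$, reads $\frac{J}{\ep}L^\alpha-L^\alpha A^\alpha=N(L)^\alpha$ for $\alpha=u,c,s$, with $L^\alpha=LP_\alpha$. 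After the substitution $s=\ep t$, the maps \eqref{eq7.3} and \eqref{eq7.5} become exactly the Sylvester solution formulas $L^u=-\int_0^{\infty}e^{sJ/\ep}N(L)^u e^{-sA^u}\,ds$ and $L^s=\int_{-\infty}^{0}e^{sJ/\ep}N(L)^s e^{-sA^s}\,ds$, while \eqref{eq7.4} is the algebraic solution on $X^c$, legitimate since $A^c$ is bounded by (F) and $J+\ep D_yg$ is invertible $Y_1\to Y$ for $\ep$ small (Neumann series off $J$, using $|J^{-1}|\le C_0$ from (A2)).

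First I would check well-definedness and the self-map and contraction properties in $L(X,Y)$. By (A2) $e^{sJ/\ep}$ is an isometry of $Y$, and by (F) $|e^{-sA^u}|\le Ke^{-\omega_u s}$ and $|e^{-sA^s}|\le Ke^{-|\omega_s|s}$ for $s\ge 0$, so the integrals converge absolutely and $|\overline{G}(L)|_{L(X,Y)}\le\kappa(|L|^2+|L|+1)$ with $\kappa=\frac{C_0K|P_u|}{\omega_u}+\frac{C_0K|P_s|}{|\omega_s|}+C\ep$, the last term coming from the $X^c$ piece. Condition \eqref{eq7.6} is precisely the hypothesis that makes $\kappa$, together with $\ep\ll 1$, small enough that $\kappa(R^2+R+1)\le R$ has a positive solution $R$; fixing such an $R$, $\overline{G}$ maps $B_R\subset L(X,Y)$ into itself. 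The Lipschitz bound is identical once one uses $N(L)-N(L')=(L-L')D_yfL+L'D_yf(L-L')-D_yg(L-L')$, giving Lipschitz constant $\le\kappa(2R+1)<1$, again by \eqref{eq7.6}. Banach's theorem then produces a unique $L\in B_R$, so $|L|_{L(X,Y)}\le R=:C'$. To see $L$ genuinely solves \eqref{eq7.1}, compose $L=\overline{G}(L)$ with $P_u,P_c,P_s$ on the right (the cross terms vanish) to get $L^\alpha=G^\alpha(L)P_\alpha$; on $X^u$ one differentiates $s\mapsto e^{sJ/\ep}N(L)^u e^{-sA^u}$ — justified because this and its $s$-derivative decay exponentially — and integrates by parts to obtain $\frac{J}{\ep}L^u-L^uA^u=N(L)^u$, the right-projection of \eqref{eq7.1} onto $X^u$ after using $(A+D_xf)P_u=A^uP_u$ from (F); the $X^c$ identity is built into \eqref{eq7.4}, and $X^s$ is handled as $X^u$ over $(-\infty,0]$. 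Summing over $\alpha$ with $P_u+P_c+P_s=I$ yields the full first equation of \eqref{eq7.1}.

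Finally, for the bound $|L|_{L(X_1,Y_1)}\le C'\ep$ I would not use the $L(X,Y)$ fixed-point formula but feed $L$ back into the equation: multiplying the $X^u$ identity by $\ep$ gives $JL^u=\ep\bigl(LD_yfL^u-D_xg^u-D_ygL^u+L^uA^u\bigr)$, whose right-hand side is bounded in $L(X_1^u,Y)$ by $C(R^2+R+1)$, using that $D_xg$ and $D_yg$ map $X_1\to Y$ and $Y\to Y$ boundedly (second clause of (A3)), that $|L^u|_{L(X,Y)}\le R$, and that $A^u:X_1^u\to X^u$ is bounded because $A:X_1\to X$ is. Applying $J^{-1}$, which is bounded $Y\to Y_1$ with norm $\le C_0$ by (A2), gives $|L^u|_{L(X_1^u,Y_1)}\le C'\ep$; the same argument works for $L^s$, and for $L^c$ using $(J+\ep D_yg)^{-1}:Y\to Y_1$ bounded, and adding the three sectors (the spectral projections being bounded on $X_1$) finishes the estimate. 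I expect the main obstacle to be exactly this two-norm bookkeeping: the contraction has to be run in $L(X,Y)$, where the propagators $e^{sJ/\ep}$ and $e^{\pm sA^{u,s}}$ are bounded, whereas the $O(\ep)$ smallness lives in $L(X_1,Y_1)$ and is recovered only by re-inserting $L$ into the Sylvester equation and exploiting the $Y\to Y_1$ smoothing of $J^{-1}$; secondary care is needed to justify the differentiation under the integral and the integration by parts that identify the fixed point with a solution of \eqref{eq7.1}, and to keep track of exactly which spectral-gap smallness is packaged into \eqref{eq7.6}.
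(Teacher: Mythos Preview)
Your approach is essentially the same as the paper's: a contraction argument on a ball in $L(X,Y)$ using \eqref{eq7.6}, followed by feeding the fixed point back into the Sylvester identity $JL^u=\ep\bigl(N(L)^u+L^uA^u\bigr)$ and applying $J^{-1}:Y\to Y_1$ to upgrade to the $L(X_1,Y_1)$ estimate with an extra $\ep$. The paper works on the unit ball in an equivalent max-norm rather than choosing $R$ from a quadratic inequality, but this is cosmetic.

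One point to tighten: your justification for the integration by parts---``this and its $s$-derivative decay exponentially''---addresses convergence but not the real issue, which is a domain obstruction. The map $s\mapsto e^{sJ/\ep}N(L)^ue^{-sA^u}x^u$ is not differentiable in $Y$ in general, because $N(L)^ue^{-sA^u}x^u$ lies only in $Y$, not in $Y_1=D(J)$, so $\frac{J}{\ep}e^{sJ/\ep}(\cdot)$ is not defined. The paper avoids this by integrating by parts the other way: it uses that $t\mapsto J^{-1}e^{tJ}$ is a $C^1$ map $Y\to Y_1$ with derivative $e^{tJ}$, and moves the $t$-derivative onto $e^{-\ep tA^u}x^u$, which \emph{is} differentiable once $x^u\in X_1^u$. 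This simultaneously shows $L^u x^u\in Y_1$ and produces the identity $JL^u=\ep N(L)P_u+\ep L^uA^u$ by recognizing the remaining integral as $L^u(\ep A^u x^u)$. Your bootstrap step then goes through exactly as you wrote it.
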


\begin{proof}
In this proof, we use the equivalent norm on $X$ defined by
\[
\|x\|=\max\{|P_ux|, \, P_cx|, \, |P_sx|\},
\]
which induces equivalent norms $\|\cdot\|$ of operators. In particular,
\[
\|L\|= |L^u|_{L(X^u,Y)} +|L^c|_{L(X^c,Y)} +|L^s|_{L(X^s,Y)}.
\]
Let
\[
\mathcal{B}=\Big\{L\Big| \|L^u\| \leq1\Big\}, \qquad C_0= \max\big\{ \|A^c\|, \|D_yf(0,\ep)\|,
\|D_xg(0,\ep)\|, \|D_yg(0,\ep)\|\big\}.
\]
$\overline{G}$ satisfies the following estimates on $\mathcal{B}(\rho)$
\begin{eqnarray*}
&&\|\overline{G}(L)\|\leq 5C_0K\Big(\frac{1}{|\omega_s|}+\frac{1}{|\omega_u|}+
2\ep|J^{-1}|\Big),\\
&&\|\overline{G}(L_1)-\overline{G}(L_2)\| \leq 3KC_0\Big(\frac{1}{|\omega_s|}+\frac{1}{|\omega_u|}+
2\ep|J^{-1}|\Big) \|L_1-L_2\|.
\end{eqnarray*}
If $\omega_{u,s}$ satisfy
\begin{equation}\label{eq7.6}
\frac{1}{|\omega_s|}+\frac{1}{|\omega_u|}+ 2\ep|J^{-1}|<\frac{1}{5 KC_0},
\end{equation}
then clearly $\overline{G}$ defines a contraction mapping on $\mathcal{B}$.
Therefore, $\overline{G}$ has a fixed point $L$. Integrating
\eqref{eq7.3} by parts
\[\begin{aligned}
L^u x^u=&J^{-1}(\ep LD_yfL^{u}-\ep D_xg-\ep D_ygL^{u}) x^u\\
&+\int_{+\infty}^0J^{-1}e^{tJ}(\ep LD_yfL^{u}-\ep D_xg-\ep
D_ygL^{u})e^{-\ep tA^{u}}\ep A^u x^u dt
\end{aligned}\]
shows $L^u\in L(X_1^u,Y_1)$ and
\begin{equation} \label{E:Lu}
JL^u = \ep LD_yfL^{u}-\ep D_xg -\ep D_ygL^{u} + \ep L^u A^u
\end{equation}
which immediately implies
\[\begin{aligned}
|L|_{L(X_1^u,Y_1)} \leq  C'\ep.
\end{aligned}\]
Identity \eqref{E:Lu} and the similar one for $L^s$ along with the definition of $G^c$ implies $L$
satisfies \eqref{eq7.1}.
\end{proof}

To solve for $L_2^{\ep}$, we consider $L \in L(Y_1, X_1)$ with $L^{u,c,s}= P_{u,c,s} L$ and
\[
P^{u,c,s} \wt{G} (L)= \left\{\begin{aligned}
&\ep\int_{+\infty}^0e^{-\ep
tA^u}\big(D_yf-L^uD_xgL-L^uD_yg\big)e^{tJ}dt\\
&\ep\big(D_yf-L^cD_xgL+A^cL\big)(J+\ep D_yg)^{-1}\\
&\ep\int_{-\infty}^0e^{-\ep
tA^s}\big(D_yf-L^sD_xgL-L^sD_yg\big)e^{tJ}dt.
\end{aligned}\right.
\]

\begin{lemma}\label{lemma7.3}
If $\omega_{u,s}$ satisfy \eqref{eq7.6}, there exists a unique
$L\in L(Y_1,X_1)$ such that $\wt{G}(L)=L$. Moreover, $L\in L(Y,X)$
with estimates $|L|_{L(Y_1,X_1)}\leq C'$, $|L|_{L(Y,X)}\leq C'\ep$.
\end{lemma}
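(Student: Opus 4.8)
The plan is to mirror the proof of Lemma~\ref{lemma7.2}, now solving for the ``lower'' block $L_2^\ep$, and then to read off the two stated bounds by an integration by parts in time. As there, I would use the equivalent norm $\|x\|=\max\{|P_ux|,|P_cx|,|P_sx|\}$ on $X$ and, for $L\in L(Y_1,X_1)$, write $L^{u,c,s}=P_{u,c,s}L$ and $\|L\|=|L^u|+|L^c|+|L^s|$ (norms of operators into $X_1^{u,c,s}$), together with the analogous quantity for operators in $L(Y,X)$. Since $A$ generates a group on $X$, hypothesis~(F) gives, exactly as in Remark~\ref{rem4.1}, that $e^{tA^{u}}$ $(t\le0)$ and $e^{tA^{s}}$ $(t\ge0)$ restrict to $C_0$-semigroups on $X_1^{u}$, $X_1^{s}$ with the same exponential estimates in the $X_1$-norm; combined with $|e^{tJ}|\le1$ and $|(J+\ep D_yg)^{-1}|_{L(Y,Y_1)}\le C'$ for $\ep\ll1$, this makes $\wt G$ a well-defined map of $L(Y_1,X_1)$ into itself.

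First I would check, just as for $\overline G$ in Lemma~\ref{lemma7.2}, that on a ball $\mathcal B(\rho_0)\subset L(Y_1,X_1)$ of radius $\rho_0$ of order one,
\[
\|\wt G(L)\|\leq CKC_0\Big(\tfrac1{|\omega_s|}+\tfrac1{|\omega_u|}+2\ep|J^{-1}|\Big)(1+\rho_0)^2 ,
\]
together with a Lipschitz bound carrying the same prefactor. The $u$- and $s$-integrals converge because $|e^{-\ep tA^{u,s}}|\le Ke^{-\ep|\omega_{u,s}||t|}$ on the relevant half-lines, which supplies the factor $1/(\ep|\omega_{u,s}|)$ absorbing the explicit $\ep$; the $c$-component is already $O(\ep)$ by the explicit $\ep$ and the uniform bound on $(J+\ep D_yg)^{-1}$. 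Choosing the generic constants and $\rho_0$ first and then invoking \eqref{eq7.6}, $\wt G$ is a contraction of $\mathcal B(\rho_0)$, with a fixed point $L$ satisfying $|L|_{L(Y_1,X_1)}\le C'$.

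To obtain $|L|_{L(Y,X)}\le C'\ep$ I would integrate the $P^{u}$ and $P^{s}$ components of the fixed-point identity by parts in $t$, exactly as $L^u$ was handled at the end of the proof of Lemma~\ref{lemma7.2}. Writing $e^{tJ}=\tfrac{d}{dt}(e^{tJ}J^{-1})$ in
\[
L^uy=\ep\int_{+\infty}^{0}e^{-\ep tA^{u}}\big(D_yf-L^uD_xgL-L^uD_yg\big)e^{tJ}y\,dt ,
\]
the boundary term at $\infty$ vanishes by the decay of $e^{-\ep tA^{u}}$; the boundary term at $0$ is $\ep(D_yf-L^uD_xgL-L^uD_yg)J^{-1}y$, which lies in $X_1$ with norm $\le C'\ep|y|_Y$, since by~(A3) the operators $D_yf$, $D_xg$, $D_yg$ are bounded on both the $X_1$--$Y_1$ and the $X$--$Y$ scales, so this composition maps $Y$ into $X_1^u$; and the remaining integral acquires an extra factor $\ep A^{u}$, where $A^{u}e^{-\ep tA^{u}}$, as an operator $X_1^u\to X^u$, is bounded with an $O(e^{-\ep|\omega_u|t})$ estimate (no analytic smoothing is needed, as we stay in $X_1^u$, on which $A^u$ is bounded), so this term is again $O(\ep)$. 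Likewise for $L^s$; the $c$-component is $O(\ep)$ directly. Hence $|L|_{L(Y,X)}\le C'\ep$. Finally, the identities produced by the integration by parts, namely $L^{u}J=\ep A^{u}L^{u}+\ep(D_yf-L^uD_xgL-L^uD_yg)$, its $s$-analogue, and the relation $L^cJ=\ep A^cL^c+\ep(D_yf-L^cD_xgL-L^cD_yg)$ coming from the defining formula for $P^{c}\wt G$, sum to exactly the second equation of \eqref{eq7.1}, so $L=L_2^\ep$.

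The step I expect to require the most care is this last, two-norm bookkeeping: tracking whether each composition produced by the integration by parts lands in $X_1$ or only in $X$, checking that $A^{u}e^{-\ep tA^u}$ (and its $s$-counterpart) is integrable in $t$ against the $e^{tJ}J^{-1}y$ factor, and verifying that the restricted semigroups obey the Remark~\ref{rem4.1} estimates under~(F). None of this introduces an essential difficulty beyond what already appears in the proof of Lemma~\ref{lemma7.2}.
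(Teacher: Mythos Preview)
Your proposal is correct and follows essentially the same approach as the paper, which simply states that the proof proceeds by the same procedure as Lemma~\ref{lemma7.2} with the contraction run on the unit ball $\mathcal{B}_1=\{L:|L^u|_{L(Y_1,X_1^u)}+|L^c|_{L(Y_1,X_1^c)}+|L^s|_{L(Y_1,X_1^s)}\le1\}$. One minor remark: you do not need $A$ to generate a group; hypothesis~(F) already gives $e^{sA^u}$ for $s\le0$ and $e^{sA^s}$ for $s\ge0$, which is exactly what the integrals in $\wt G$ require, and the restriction to the graph-norm spaces follows as in Remark~\ref{rem4.1}.
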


\begin{proof}
The proof of the lemma follows from the same procedure on the set
\[
\mathcal{B}_1=\Big\{L\Big||L^u|_{L(Y_1,X_1^u)}+|L^c|_{L(Y_1,X_1^c)}+|L^s|_{L(Y_1,X_1^s)}\leq1\Big\}.
\]
\end{proof}

\end{document}